\documentclass[11pt,a4paper]{article}
\usepackage[margin=1.25in]{geometry}
\usepackage{amsmath, amsfonts, amssymb, wasysym, graphicx}
\usepackage{hyperref}
\hypersetup{colorlinks,citecolor=blue,linkcolor=blue}

\title{Splittings of one-ended groups with one-ended halfspaces}

\author{Michael Mihalik\footnote{Department of Mathematics, 1326 Stevenson Center, Vanderbilt University, Nashville, TN 37240, United States, michael.l.mihalik@vanderbilt.edu} and Sam Shepherd\footnote{Institut f\"ur Mathematische Logik und Grundlagenforschung, Fachbereich Mathematik und Informatik, Universit\"at M\"unster, Einsteinstra{\ss}e 62, 48149 M\"unster, Germany, sam.shepherd@uni-muenster.de} }

%From Sam's preamble
\usepackage{amsthm}

%From Mike's preamble

\newtheorem{theorem}{Theorem}[section]
\newtheorem{proposition}[theorem]{Proposition}
\newtheorem{lemma}[theorem]{Lemma}
\newtheorem{corollary}[theorem]{Corollary}

\theoremstyle{definition}
\newtheorem{remark}[theorem]{Remark}
\newtheorem{example}[theorem]{Example}
\newtheorem{notation}[theorem]{Notation}

\newcounter{definitionnum}
\newenvironment{definition}{\addvspace{12pt}\refstepcounter{definitionnum}
\noindent{\bf Definition \arabic{definitionnum}.}}{\par\addvspace{12pt}}

%From Sam's preamble

\usepackage{enumitem}
\setlist[enumerate,1]{label=(\arabic*)}%enumerate always with round brackets

\usepackage{mathtools}

\usepackage{tikz-cd} %commutative diagrams

\usepackage{tikz}
\tikzset{every loop/.style={}}
\usetikzlibrary{arrows,shapes,decorations.markings,automata,backgrounds,petri,bending,calc}

\usepackage{float}

%% mathcal
\newcommand{\cA}{\mathcal{A}}

\newcommand{\cH}{\mathcal{H}}

\newcommand{\cM}{\mathcal{M}}

\newcommand{\cP}{\mathcal{P}}

\newcommand{\R}{\mathbb{R}}
\newcommand{\Z}{\mathbb{Z}}

\newcommand{\fa}{\mathfrak{a}}
\newcommand{\fb}{\mathfrak{b}}
\newcommand{\fc}{\mathfrak{c}}
\newcommand{\h}{\mathfrak{h}}

\newcommand{\hh}{\hat{\mathfrak{h}}}

\newcommand{\acts}{\curvearrowright}

\newcommand{\Cay}{\operatorname{Cay}}
\newcommand{\Aut}{\operatorname{Aut}}

\date{\today}
\begin{document}
\maketitle

\begin{abstract} 
	We introduce the notion of halfspaces associated to a group splitting, and investigate the relationship between the coarse geometry of the halfspaces and the coarse geometry of the group.
	Roughly speaking, the halfspaces of a group splitting are subgraphs of the Cayley graph obtained by pulling back the halfspaces of the Bass--Serre tree.
	Our first theorem shows that (under mild conditions) any splitting of a one-ended group can be upgraded to a splitting where all the halfspaces are one-ended.
	Our second theorem demonstrates that a one-ended group usually has a JSJ splitting where all the halfspaces are one-ended.
	And our third theorem states that if a one-ended finitely presented group $G$ admits a splitting such that some edge stabilizer has more than one end, but the halfspaces associated to the edge stabilizer are one-ended, then $H^2(G,\mathbb ZG)\ne \{0\}$; in particular $G$ is not simply connected at infinity and $G$ is not an $n$-dimensional duality group for $n\geq3$.

\end{abstract}

\section{Introduction}\label{Intro}

In 1982, B. Jackson \cite{J82b} proved that amalgamated products and HNN-extensions of one-ended finitely presented groups over finitely generated groups with more than one end are not simply connected at infinity. When the base groups are not one-ended, little was known. Exotic one-ended groups can be obtained as amalgamated products (HNN-extensions) with infinite-ended base groups and infinite-ended edge groups. These groups may or may not be simply connected at infinity. 
In this paper, instead of considering splittings with one-ended vertex groups we consider splittings with one-ended \emph{halfspaces}.
Given a finitely generated group $G$ and a splitting $G\acts T$ (here $T$ denotes the Bass--Serre tree) with finitely generated edge stabilizers, halfspaces in $T$ induce halfspaces in the Cayley graph of $G$ (see Section \ref{sec:halfspaces} for precise definitions).
Each halfspace is a connected subgraph of the Cayley graph, and its quasi-isometry type is independent of the choice of finite generating set for $G$ (Lemma \ref{lem:independent}).
If $G$ is one-ended one may expect that the halfspaces are also one-ended, but this is not necessarily the case -- see Example \ref{Ex1}.
However, our first theorem shows that under mild conditions one can upgrade a splitting of a one-ended group to one with one-ended halfspaces. 
This result is in analogy with results about one-ended halfspaces for cubulated groups that appear in \cite{Shepherd22}.

\begin{theorem}\label{thm:oneendedhs}
	Let $G\acts T$ be a non-trivial splitting with $G$ one-ended and finitely generated. Suppose the edge stabilizers are finitely generated and accessible.
	Then there is a non-trivial splitting $G\acts T'$ with minimal action such that:
	\begin{enumerate}
		\item The halfspaces of $G\acts T'$ are one-ended.
		\item Edge stabilizers (resp. vertex stabilizers) of $T'$ are finitely generated and are subgroups of the edge stabilizers (resp. vertex stabilizers) of $T$.
		\item For each edge $e'$ in $T'$ there exists an edge $e$ in $T$ such that $G_{e'}$ is a vertex stabilizer in some finite splitting of $G_e$ over finite subgroups.
	\end{enumerate}
\end{theorem}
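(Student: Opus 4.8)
The plan is to iteratively modify the splitting, each time decreasing some complexity measure, until all halfspaces are one-ended. The starting observation is that a halfspace $\mathfrak{h}$ of $G \acts T$ fails to be one-ended precisely when it has more than one end (it cannot be zero-ended, being infinite and connected with the group acting cocompactly on the tree side). If a halfspace associated to an edge $e$ has more than one end, then by Stallings' theorem applied appropriately — or rather by analyzing the structure of the halfspace as a graph on which $G_e$ acts — one should be able to locate a splitting of the edge group $G_e$ over a finite subgroup that ``sees'' this extra end. The idea is that the coarse separation behaviour of $\mathfrak{h}$ is controlled by $G_e$ together with the tree $T$, so extra ends of $\mathfrak{h}$ must come from extra ends visible after cutting along finite subgroups of $G_e$. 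This is where the hypothesis that edge stabilizers are finitely generated and \emph{accessible} enters: accessibility guarantees that iterating the process of refining along finite subgroups of edge groups terminates.

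First I would set up the refinement operation precisely: given an edge $e$ of $T$ whose associated halfspace is not one-ended, use accessibility of $G_e$ to write $G_e$ as the fundamental group of a finite graph of groups with finite edge groups and vertex groups that are finite or one-ended. Then I would fold/refine the splitting $G \acts T$ along this decomposition of $G_e$, producing a new splitting $G \acts T_1$ whose edge stabilizers are either the original edge stabilizers (for edges not involved) or the new finite or one-ended pieces. This refinement step is standard in Bass--Serre theory (replacing an edge by the Bass--Serre tree of the decomposition of its stabilizer, equivariantly), and it immediately gives conditions (2) and (3) by construction. I would then take the minimal subtree to ensure minimality, checking that minimality is preserved (or can be restored without losing the other properties) and that the splitting stays non-trivial — here one-endedness of $G$ is used to rule out the degenerate case where the refinement collapses everything.

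The main step, and the expected principal obstacle, is showing that this refinement genuinely makes progress: that after finitely many refinements all halfspaces become one-ended. The delicate point is that refining along a finite-edge-group decomposition of $G_e$ could a priori create \emph{new} edge groups whose halfspaces are again not one-ended, so I need a complexity function that strictly decreases. The natural candidate combines the accessibility complexity of the edge groups (number of edges in a reduced decomposition over finite groups) with the number of $G$-orbits of edges in $T$; I would argue that each refinement either strictly reduces the total accessibility complexity of the collection of edge groups, or splits off finite edge groups (which have one-ended — indeed one-point-boundary — halfspaces trivially) in a way that cannot recur. The technical heart is a lemma of the form: \emph{a halfspace of $G \acts T$ at edge $e$ has one end if and only if $G_e$ is one-ended, or more precisely has one end relative to the way it sits in the adjacent vertex groups}. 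Proving this characterization — relating ends of the halfspace subgraph of the Cayley graph to the end structure of $G_e$ — via coarse geometry arguments (the halfspace deformation retracts coarsely onto an orbit of $G_e$ together with the ``cone'' of vertex-group cosets hanging off on the far side, and removing a large ball has bounded-diameter-many complementary components iff $G_e$ does) is where I expect to spend the most effort, and it is plausibly where the precise finiteness/accessibility hypotheses get used in their full strength.
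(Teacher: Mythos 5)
Your proposal has a genuine gap, and the approach the paper actually takes is quite different from what you describe.

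The central problem is your proposed ``technical heart'' lemma: that a halfspace at edge $e$ has one end if and only if $G_e$ is one-ended (or ``one-ended relative to the way it sits in the adjacent vertex groups''). This is false. Corollary \ref{cor:two-ended} of the paper shows that a splitting over a two-ended group always has one-ended halfspaces, even though two-ended groups are not one-ended; and the paper's Example \ref{Ex1} shows that $G_e$ can be infinite-ended while only one of the two halfspaces fails to be one-ended. The number of ends of a halfspace is governed by the global coarse geometry of the whole halfspace (a union of infinitely many vertex- and edge-group cosets), not by the end structure of $G_e$ alone, and there is no known clean characterization of the type you posit. This is exactly the subtle point the paper's machinery is designed to handle.

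The second difficulty is your refinement operation. You propose to take a Dunwoody decomposition of $G_e$ over finite subgroups and ``fold/refine $T$ along this decomposition of $G_e$.'' But refinement in Bass--Serre theory is done at \emph{vertices}: to blow up a vertex $v$, one needs a splitting of $G_v$ in which all edge groups incident to $v$ are elliptic. You cannot in general blow up an \emph{edge} $e$ using an arbitrary splitting of $G_e$, because there is no guarantee that the two adjacent vertex groups act compatibly with that splitting. The decomposition of $G_e$ must be chosen to ``see'' the extra ends of the halfspace in a $G$-equivariant way compatible with the rest of the tree, and this compatibility is not automatic.

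The paper sidesteps both issues by not refining $T$ at all. Instead, the heart of the proof (Theorem \ref{thm:smallerstabs}) works directly in a tree of spaces $X$ for $G\acts T$: it takes a halfspace $\h_0$ with more than one end, finds a carefully chosen minimal cut $C\subseteq V\h_0$ (using Kr\"on's theorem, Theorem \ref{thm:Kron}, to ensure the $\Aut$-translates of $C$ form a nested family, and Lemmas \ref{lem:oneunbounded}--\ref{lem:modX} to ensure the cut is compatible with the subhalfspaces of $\h_0$), builds a pocset $\cP$ out of the original halfspaces of $X$ together with the $\sim$-classes defined by this cut, and lets $T'$ be the cubing of $\cP$. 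The resulting edge stabilizers of $T'$ correspond to stabilizers in a $G_0$-tree $T_0$ with \emph{finite} edge stabilizers (Lemma \ref{lem:T0}), which is where the decomposition of the edge group over finite subgroups actually appears --- as an output of the cut construction, not as a hypothesis fed into a refinement. Accessibility of edge stabilizers (via Theorem \ref{thm:terminate}) is then used exactly as you anticipated, to guarantee that iterating Theorem \ref{thm:smallerstabs} terminates. So your intuition about where accessibility enters is correct, and your complexity-decrease idea is broadly in line with the paper, but the mechanism for producing the next splitting and the claim about what characterizes one-ended halfspaces are both off.
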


As a corollary of Theorem \ref{thm:smallerstabs} (which is the main technical result behind Theorem \ref{thm:oneendedhs}) we also have the following result.

\begin{corollary}\label{cor:two-ended}
	If a one-ended group splits non-trivially over a two-ended group then the halfspaces of this splitting are one-ended.
\end{corollary}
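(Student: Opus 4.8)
The plan is to derive this from Theorem \ref{thm:oneendedhs} — or more precisely from the technical engine Theorem \ref{thm:smallerstabs} — by checking that when the edge group is two-ended the ``upgrade'' process cannot actually change anything. First I would invoke Theorem \ref{thm:oneendedhs} with $T$ the given Bass--Serre tree: its hypotheses are satisfied because a two-ended group is finitely generated, and it is accessible (it has a finite-by-$\mathbb Z$ or finite-by-$D_\infty$ structure, so it splits at most finitely often over finite subgroups). This produces a splitting $G \acts T'$ with one-ended halfspaces, where by item (3) each edge stabilizer $G_{e'}$ is a vertex group in a finite splitting of some $G_e$ over finite subgroups. Since $G_e$ is two-ended, such a vertex group is either finite or two-ended; but finite edge stabilizers would force the halfspaces to be infinite-ended (a halfspace would deformation retract onto something quasi-isometric to the corresponding subtree of $T'$, which has infinitely many ends unless the edge groups are one-ended) — actually more simply, one-ended halfspaces already forbid this — so every $G_{e'}$ is again two-ended.

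The real content is then to show that the original splitting $G \acts T$ already has one-ended halfspaces, i.e. that the passage to $T'$ was unnecessary. For this I would look directly at a halfspace $\mathfrak h$ of $G \acts T$ determined by an edge $e$ with stabilizer $G_e$ two-ended. A halfspace is a connected subgraph of $\Cay(G)$ that is coarsely the union of the cosets $gG_v$ for vertices $v$ on one side of $e$, glued along cosets of edge groups; its two ``ends'' data are controlled by $G_e$. The key geometric fact I would use: the frontier of $\mathfrak h$ in $\Cay(G)$ is within bounded Hausdorff distance of $G_e$, which is quasi-isometric to $\mathbb R$, hence the frontier has exactly two ends (or is coarsely a line). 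Combined with one-endedness of $G$, I would argue that $\mathfrak h$ cannot be separated into two infinite pieces by a bounded set: any such separation would, together with the complementary halfspace (also bounded-frontier, coarsely a line), yield a way to disconnect $\Cay(G)$ at infinity by a bounded set, contradicting one-endedness of $G$. Here I expect to use a Mayer--Vietoris / ends-of-pairs argument: $\mathrm{ends}(G) $ relates to $\mathrm{ends}(\mathfrak h)$, $\mathrm{ends}(\mathfrak h^c)$, and $\mathrm{ends}(G_e)$, and since the first is $1$ and the last is $2$, the halfspaces cannot have more than one end each (they are clearly unbounded since the splitting is non-trivial and the action minimal-izable).

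The main obstacle I anticipate is making the ``frontier of a halfspace is coarsely $G_e$'' statement precise and handling the bookkeeping when the side of $e$ in $T$ contains infinitely many orbits of edges — then $\mathfrak h$ is glued from infinitely many vertex-group cosets along infinitely many edge-group cosets, and I must ensure the relevant frontier/separation estimates remain uniform. I would address this by working in the tree: a bounded set separating $\mathfrak h$ projects (coarsely) to a bounded subtree of the subtree $T_{\mathfrak h}$, which separates $T_{\mathfrak h}$ into pieces, and I transfer the separation back using that each edge group is two-ended (hence coarsely connected ``around'' any bounded set in a controlled way). Alternatively — and this may be cleaner — I would simply cite the proof of Theorem \ref{thm:smallerstabs} and observe that when $G_e$ is two-ended the induced splitting of $G_e$ over finite groups that the theorem extracts is trivial, so $T' = T$; the corollary is then immediate. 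I would present whichever of these two routes the proof of Theorem \ref{thm:smallerstabs} makes shortest.
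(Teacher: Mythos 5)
Your ``alternative'' route --- ``observe that when $G_e$ is two-ended the induced splitting of $G_e$ over finite groups that the theorem extracts is trivial, so $T' = T$'' --- gets the logic exactly backwards. Theorem \ref{thm:smallerstabs}\ref{item:nofix} explicitly guarantees that the extracted splitting $G_{e_0}\acts T_{e_0}$ is \emph{non-trivial}, and two-ended groups certainly do split non-trivially over finite subgroups (e.g.\ $\mathbb Z=\{1\}\ast_{\{1\}}$ as an HNN extension, or $D_\infty=\mathbb Z_2\ast\mathbb Z_2$). Your first paragraph actually contains the key observation --- that finite edge stabilizers of $T'$ would contradict one-endedness of $G$ --- but you then draw the wrong inference (``so every $G_{e'}$ is again two-ended'') rather than recognizing that finite edge stabilizers arise unavoidably, yielding the desired contradiction. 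The direct geometric argument in your middle paragraphs (frontier of a halfspace coarsely a line, ends-of-pairs) is not the paper's route and, as you flag yourself, is not carried through to a complete proof.

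The intended derivation from Theorem \ref{thm:smallerstabs} is by contradiction. Suppose some halfspace of the splitting over the two-ended group $C$ has more than one end. Theorem \ref{thm:smallerstabs} then produces a non-trivial finite splitting $G_{e_0}\acts T_{e_0}$ over finite subgroups; since $G_{e_0}$ is two-ended, any such splitting has finite vertex stabilizers (an infinite vertex stabilizer would have finite index, giving a finite orbit whose span is a finite invariant subtree, so by minimality the tree is finite and has a fixed vertex --- contradicting non-triviality). By \ref{item:T'toTe}, every edge stabilizer of $T'$ equals a vertex stabilizer of some $T_e$, and since $G$ acts transitively on $ET$ these $T_e$ are all conjugates of the non-trivial $T_{e_0}$; hence every edge stabilizer of $T'$ is finite. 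Moreover $T'$ cannot be a single vertex, since its vertex stabilizers lie in vertex stabilizers of $T$ and none of the latter equals $G$ (as $T$ is a non-trivial minimal $G$-tree). Thus $G$ admits a non-trivial splitting over a finite subgroup, contradicting one-endedness of $G$.
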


We remark that, in the finitely presented case, Corollary \ref{cor:two-ended} can alternatively be deduced from a result of Papasoglu \cite[Definition 1.4 and Lemma 1.8]{Papasoglu05} (note that Lemma 1.8 in \cite{Papasoglu05} is not quite stated correctly: the group $G$ should be one-ended).

As an elementary example of Corollary \ref{cor:two-ended} one can consider an essential simple closed curve in a closed hyperbolic surface $S$ and the associated splitting of $\pi_1(S)$ over $\Z$ with free vertex groups; the lift of the curve to the universal cover $\mathbb{H}^2$ splits $\mathbb{H}^2$ into two halfspaces each of which is a half-plane. 

There is a long history in the literature of studying group splittings and of finding ways to upgrade a given splitting to one that has better properties or that carries more information about the group.
Probably the most well known example of this is the notion of a \emph{JSJ splitting} or \emph{JSJ tree}, due to Rips--Sela \cite{RipsSela97} and Dunwoody--Sageev \cite{DunwoodySageev99}.
In its most general form, if we have a group $G$ and a family $\cA$ of subgroups of $G$ that is closed under conjugating and taking subgroups, then there is a notion of \emph{JSJ tree of $G$ over $\cA$} due to Guirardel--Levitt \cite{GuirardelLevitt17} (see Definition \ref{defn:JSJ}).
Our next theorem shows that, under mild conditions on $\cA$, any one-ended group has a JSJ splitting with one-ended halfspaces.

\begin{theorem}\label{thm:JSJ}
	Let $G$ be a finitely generated one-ended group and let $\cA$ be a family of subgroups of $G$ that is closed under conjugating and taking subgroups.
	\begin{enumerate}
		\item If the groups in $\cA$ are finitely generated and accessible and there exists a JSJ tree $T$ of $G$ over $\cA$, then there exists a JSJ tree $T'$ of $G$ over $\cA$ such that the halfspaces of $G\acts T'$ are one-ended.
		\item If $T$ is a JSJ tree of $G$ over $\cA$ with finitely generated edge stabilizers, such that no vertex stabilizer of $T$ fixes an edge in $T$, then the halfspaces of $G\acts T$ are one-ended.
	\end{enumerate}
\end{theorem}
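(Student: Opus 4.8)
The plan is to handle the two parts separately, deducing (1) from Theorem~\ref{thm:oneendedhs} and (2) from Theorem~\ref{thm:smallerstabs}, using in both cases only the two formal defining properties of a JSJ tree: universal ellipticity over $\cA$, and maximality for domination among universally elliptic $\cA$-trees.

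For part (1): if $G$ does not split over $\cA$ then the JSJ tree is a point and there is nothing to prove (its only ``halfspace'' is all of $\Cay(G)$), so assume $T$ is non-trivial. Its edge stabilizers lie in $\cA$, hence are finitely generated and accessible, so Theorem~\ref{thm:oneendedhs} applies to $G\acts T$ and yields a non-trivial minimal splitting $G\acts T'$ with one-ended halfspaces whose edge and vertex stabilizers are contained in those of $T$. I would then check that $T'$ is again a JSJ tree over $\cA$. It is an $\cA$-tree, since its edge stabilizers are subgroups of those of $T$ and $\cA$ is closed under subgroups; it is universally elliptic, since each of its edge stabilizers lies in a universally elliptic edge stabilizer of $T$ and subgroups of elliptic subgroups are elliptic; and it dominates every universally elliptic $\cA$-tree $S$, since each of its vertex stabilizers lies in a vertex stabilizer of $T$, which is elliptic in $S$ because $T$ dominates $S$. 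This part should be routine.

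For part (2) I would argue by contradiction, assuming some halfspace of $G\acts T$ has more than one end — it cannot have none, since $G$ one-ended forces every edge stabilizer, hence every halfspace, to be infinite. I expect Theorem~\ref{thm:smallerstabs} to then provide a non-trivial splitting $G\acts T'$ with one-ended halfspaces whose vertex stabilizers are contained in those of $T$ and whose edge stabilizers are vertex groups of finite splittings over finite subgroups of edge stabilizers of $T$; in particular the edge stabilizers of $T'$ are subgroups of those of $T$. Exactly as in part (1), $T'$ is then a universally elliptic $\cA$-tree that dominates $T$, so the maximality half of the JSJ property forces $T$ to dominate $T'$ as well, and $T$ and $T'$ lie in a common deformation space. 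The remaining step is to use the hypothesis that no vertex stabilizer of $T$ fixes an edge — equivalently, that $T$ is reduced — to conclude that the modification produced by Theorem~\ref{thm:smallerstabs} was vacuous: $T'=T$ up to collapsing edges whose stabilizer equals an endpoint stabilizer, a move that preserves one-endedness of halfspaces, so the halfspaces of $G\acts T$ were one-ended all along.

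The hard part is this last step, and really the interior of Theorem~\ref{thm:smallerstabs}: one must understand precisely how a non-one-ended halfspace gets repaired, and why that repair does nothing new on a reduced JSJ tree. The cleanest packaging would be a dichotomy — either the modification of Theorem~\ref{thm:smallerstabs} leaves $T$ unchanged (so its halfspaces are already one-ended), or it strictly shrinks some edge stabilizer while still dominating $T$, producing a universally elliptic $\cA$-tree strictly above $T$ and contradicting maximality of the JSJ tree — together with the observation that reducedness is exactly what excludes the degenerate modifications; that reducedness is genuinely needed here, and not merely convenient, is signalled by Example~\ref{Ex1}. A further technical point I would need to pin down is that collapses, and if necessary slide moves within the deformation space, preserve one-endedness of halfspaces.
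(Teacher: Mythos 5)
Your part (1) is correct and essentially matches the paper: invoke Theorem~\ref{thm:oneendedhs} to obtain $T'$, then check that $T'$ is again a JSJ tree over $\cA$ because its edge stabilizers are (universally elliptic) subgroups of those of $T$ and its vertex stabilizers are contained in those of $T$, so it dominates everything $T$ does. (The degenerate case of a trivial JSJ tree is worth flagging as you do.)

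For part (2), however, there is a genuine gap. You correctly set up the contradiction: assuming some halfspace of $T$ is not one-ended, Theorem~\ref{thm:smallerstabs} gives a new $\cA$-tree $T'$ with smaller stabilizers, $T'$ is then a JSJ tree dominating $T$, and hence $T$ must in turn dominate $T'$. But the step that actually yields a contradiction is left open: you propose to show that $T$ and $T'$ differ only by collapse moves in a common deformation space and that collapses preserve one-endedness of halfspaces, and you acknowledge that you cannot carry this out without examining the interior of Theorem~\ref{thm:smallerstabs}. That alternative route is neither established nor obviously available (there is no reason the $T'$ produced by the chopping construction is a collapse of $T$, and one-endedness of halfspaces under collapse is itself a non-trivial claim). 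The paper closes the loop much more directly by building the needed obstruction into the statement of Theorem~\ref{thm:smallerstabs} itself: conclusion~\ref{item:nofixT'} there says precisely that, under the hypothesis that no vertex stabilizer of $T$ fixes an edge of $T$, there is a vertex $v_0$ of $T$ with $G_{v_0}$ fixing no point of $T'$. That is exactly the statement that $T$ does \emph{not} dominate $T'$, and the contradiction is immediate. Your ``dichotomy'' is morally right in identifying that reducedness must obstruct the modification, but the content of that obstruction is conclusion~\ref{item:nofixT'}, which has to be proved inside Theorem~\ref{thm:smallerstabs} rather than deduced abstractly at this point. A small incidental slip: a halfspace is unbounded not because the corresponding edge stabilizer is infinite, but because the action $G\acts T$ is minimal, so every halfspace of $T$ is unbounded and hence so is its preimage (Lemma~\ref{lem:hdeep}).
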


Although the definition of halfspaces for a group splitting is new, there are similar ideas that have been used many times in the literature, especially regarding the construction of JSJ trees.
For example, if $(G,\mathbb{P})$ is a relatively hyperbolic group with connected Bowditch boundary, then the construction of the exact cut pair/cut point tree $T$ for $G$ \cite{HaulmarkHruska23} (which is the same as the JSJ tree of cylinders for elementary splittings relative to $\mathbb{P}$) can be formulated in terms of halfspaces.
Indeed, the edges of $T$ correspond to decompositions of the Bowditch boundary $\partial(G,\mathbb{P})=U_1\cup U_2$ (which can be thought of as halfspaces in the boundary) such that $U_1\cap U_2$ is a cut point or an inseparable exact cut pair and $U_1 - U_2$ is one of the connected components of $\partial(G,\mathbb{P}) - U_1\cap U_2$.
Moreover, it can be deduced from \cite[Proposition 6.7]{HaulmarkHruska23} that if an edge $e$ in $T$ corresponds to a decomposition $\partial(G,\mathbb{P})=U_1\cup U_2$, then $U_1$ and $U_2$ are the limit sets of the halfspaces of $G\acts T$ associated to $e$ (see Definition \ref{defn:halfspaceT}).
Halfspaces are also fundamental in the theory of CAT(0) cube complexes and the study of group actions on CAT(0) cube complexes -- which can be regarded as a generalization of group splittings.

The next few results concern the simple connectivity at infinity and the second cohomology of groups, and their connections to one-ended halfspaces.
We start with a theorem that works with an analogue of one-ended halfspaces for CW-complexes.

\begin{theorem}\label{GeoSplit}
Suppose $X$ is a locally finite CW-complex, and $X_1$ and $X_2$ are connected one-ended subcomplexes of $X$ such that $X_1\cup X_2=X$. If $K$ is a finite subcomplex of $X$ (possibly empty) such that  $(X_1\cap X_2)-K$ has more than one unbounded component, then $X$ does not have pro-finite first homology at infinity. In particular, $X$ is not simply connected at infinity.
\end{theorem}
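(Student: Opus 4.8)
The plan is to work with an explicit model for the first homology at infinity for which the Mayer--Vietoris sequence applies verbatim. For a finite subcomplex $C$ of $X$, let $X\ominus C$ denote the subcomplex of $X$ consisting of all (closed) cells disjoint from $C$; then one model for the first homology pro-group at infinity is $\{H_1(X\ominus C)\}$, indexed by the finite subcomplexes $C\supseteq K$. Recall that if a pro-group $\{G_C\}$ is pro-finite, then for every index $C$ there is $C'\geq C$ with $\operatorname{im}(G_{C'}\to G_C)$ finite; so it suffices to show this \emph{fails} at the index $C=K$, i.e.\ that for \emph{every} finite subcomplex $C'\supseteq K$ the image of $H_1(X\ominus C')\to H_1(X\ominus K)$ is infinite. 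The last sentence of the theorem is then immediate, since if $X$ were simply connected at infinity its $\pi_1$ pro-group at infinity would be pro-trivial, hence so would its $H_1$ pro-group, which is in particular pro-finite.

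Next I would set up Mayer--Vietoris at infinity. Since $X_1,X_2$ are subcomplexes with $X_1\cup X_2=X$ and $X_1\cap X_2=:Y$, the same decomposition persists after removing $C$: $X\ominus C=(X_1\ominus C)\cup(X_2\ominus C)$ with $(X_1\ominus C)\cap(X_2\ominus C)=Y\ominus C$, and all of these are honest subcomplexes, so the cellular chain complexes split and yield a natural reduced Mayer--Vietoris sequence. The relevant piece is the exact sequence
$$H_1(X\ominus C)\xrightarrow{\ \partial_C\ }\tilde H_0(Y\ominus C)\xrightarrow{\ j_C\ }\tilde H_0(X_1\ominus C)\oplus\tilde H_0(X_2\ominus C),$$
so that $\operatorname{im}\partial_C=\ker j_C$, and this is natural with respect to the inclusions $X\ominus C'\hookrightarrow X\ominus K$ for $C'\supseteq K$.

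Now fix two distinct unbounded components $A,B$ of $Y\ominus K$ (using the hypothesis), and let $C'\supseteq K$ be arbitrary. Since $A$ is a connected, infinite, locally finite complex and $C'$ is finite, $A\ominus C'$ has an unbounded component $A'$; likewise $B\ominus C'$ has an unbounded component $B'$. Then $A'$ and $B'$ are distinct unbounded components of $Y\ominus C'$, as they lie in the disjoint subcomplexes $A$ and $B$. Because $X_1$ is one-ended, $X_1\ominus C'$ has a unique unbounded component, which must contain both $A'$ and $B'$; hence $[A']-[B']$ maps to $0$ in $\tilde H_0(X_1\ominus C')$, and symmetrically it maps to $0$ in $\tilde H_0(X_2\ominus C')$. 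Therefore $[A']-[B']\in\ker j_{C'}=\operatorname{im}\partial_{C'}$, so there is $\alpha\in H_1(X\ominus C')$ with $\partial_{C'}\alpha=[A']-[B']$. Let $\bar\alpha\in H_1(X\ominus K)$ be its image. By naturality of $\partial$, and since $A'\subseteq A$ and $B'\subseteq B$, we get $\partial_K\bar\alpha=[A]-[B]$ in $\tilde H_0(Y\ominus K)$. As $A\neq B$, this element is nonzero in the free abelian group $\tilde H_0(Y\ominus K)$, hence has infinite order, so $\bar\alpha$ has infinite order; thus $\operatorname{im}\bigl(H_1(X\ominus C')\to H_1(X\ominus K)\bigr)$ is infinite. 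Since $C'\supseteq K$ was arbitrary, $\{H_1(X\ominus C)\}$ is not pro-finite, completing the argument.

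I expect the delicate points to be bookkeeping rather than conceptual: choosing the closed-subcomplex model $X\ominus C$ so that Mayer--Vietoris is literally valid (rather than the open complement $X\setminus C$, where one must fuss with excision or mapping-cylinder neighbourhoods), and the elementary but necessary fact that a connected infinite locally finite CW-complex minus a finite subcomplex always has an unbounded component. The conceptual heart is the single observation that one-endedness of $X_1$ and of $X_2$ forces the separating $0$-cycle $[A']-[B']$ to become a boundary in \emph{each} of $X_1\ominus C'$ and $X_2\ominus C'$, which is exactly what places it in $\operatorname{im}\partial_{C'}$; after that, the passage from ``nonzero'' to ``infinite order'' is automatic since $\tilde H_0$ is torsion-free. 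Note that we never need $X$ itself to be one-ended.
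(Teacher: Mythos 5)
Your proof is correct, and it takes a genuinely different route from the paper's. The paper argues by contradiction with an explicitly topological construction: assuming pro-finiteness, it picks vertices $v,w$ in distinct components of $(X_1\cap X_2)-K$, joins them by paths $\alpha\subset X_1-D$ and $\beta\subset X_2-D$ (using one-endedness of $X_1,X_2$), takes an $n$ with $(\alpha,\beta)^n$ homologically trivial in $X-K$, bounds this by a map of an orientable surface $M\to X-K$, and then constructs a map $M\to S^1$ (via Tietze extension on the disjoint closed preimages of one component of $X_0-K$ and its complement) whose restriction to $\partial M$ has nonzero winding number, contradicting that $[\partial M]$ is a product of commutators. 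Your approach replaces the surface-and-$S^1$ argument with the Mayer--Vietoris boundary map $H_1(X\ominus C')\to\tilde H_0(Y\ominus C')$: one-endedness of $X_1,X_2$ puts the separating class $[A']-[B']$ in $\ker j_{C'}=\operatorname{im}\partial_{C'}$, and naturality pushes its preimage down to a class of infinite order in $H_1(X\ominus K)$ detected by $\partial_K$. Conceptually the obstructions are the same (a cycle crossing between distinct unbounded components of $X_1\cap X_2$), but your version is algebraically cleaner: the fact that $\tilde H_0$ is torsion-free makes the passage from ``nonzero'' to ``infinite order'' automatic, whereas the paper needs the $n$-th-power trick and the degree argument to rule out a finite-order obstruction. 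What the paper's approach buys is self-containment -- it needs no setup of pro-groups interleaving between $X\ominus C$ and $X-C$ and no appeal to naturality of MV -- and it works directly from the definition of ``pro-finite'' given in the paper. Your argument is sound, but if you write it up you should include the two bookkeeping lemmas you flagged: that the pro-systems $\{H_1(X-C)\}$ and $\{H_1(X\ominus C)\}$ are interleaved and hence pro-isomorphic (so pro-finiteness transfers), and that $Y\ominus K$ inherits at least two unbounded components from $Y-K$; both are one-liners but are genuinely used.
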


We note that if a finitely presented group $G$ is simply connected at infinity then $H^2(G,\mathbb ZG)=\{0\}$, and $H^2(G,\mathbb ZG)=\{0\}$ if and only if the first homology at infinity of $G$ is pro-finite (see Section \S\ref{sec:scatinfty} for the relevant definitions and results).

As a corollary of Theorem \ref{GeoSplit}, we obtain the following generalization of Jackson's result, working with one-ended halfspaces rather than one-ended vertex groups.
Given a splitting of a one-ended group, if all the vertex groups are one-ended then all the halfspaces are also one-ended (see Lemma \ref{lem:oneendedvertexgroup}). 
The details for how we deduce Corollary \ref{Acyclic} from Theorem \ref{GeoSplit} are given in Section \ref{sec:acyclicthm}.

\begin{corollary}\label{Acyclic}
Let $G\acts T$ be a non-trivial splitting with minimal action, with $G$ one-ended and finitely presented. Suppose the edge stabilizers are finitely generated, and suppose there is some edge stabilizer $G_e$ with more than one end.
If the two halfspaces of $G\acts T$ associated to $e$ are one-ended, then $H^2(G,\mathbb ZG)\ne \{0\}$. 
\end{corollary}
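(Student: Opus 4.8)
The plan is to apply Theorem~\ref{GeoSplit}. First I would fix a finite presentation of $G$ with presentation complex $P$ and put $X=\widetilde P$, a simply connected, locally finite $2$-complex on which $G$ acts freely and cocompactly, with $X^{(1)}=\Cay(G,S)$ for the finite generating set $S$ given by $P$. Since $G$ is finitely presented, the first homology at infinity of $G$ is computed from $X$, and by the facts recalled in Section~\ref{sec:scatinfty} we have $H^2(G,\mathbb ZG)=\{0\}$ if and only if the first homology at infinity of $G$ is pro-finite. So it is enough to produce connected one-ended subcomplexes $X_1,X_2\subseteq X$ with $X_1\cup X_2=X$, together with a finite subcomplex $K$ for which $(X_1\cap X_2)-K$ has more than one unbounded component: Theorem~\ref{GeoSplit} then yields $H^2(G,\mathbb ZG)\neq\{0\}$.

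For the construction I would take a $G$-equivariant map $f\colon X\to T$ (which exists since $T$ is a tree) and, following the standard resolution arguments underlying Section~\ref{sec:halfspaces}, perturb it to be transverse to the midpoints of the edges of $T$ with connected preimages of those midpoints (the ``walls''); minimality of the action guarantees these preimages are nonempty. Write $W_e=f^{-1}(m_e)$ for the midpoint $m_e$ of $e$; it is a connected, locally finite, $G_e$-invariant subgraph of $X$ with $\mathrm{Stab}_G(W_e)=G_e$. Let $X_1$ and $X_2$ be the two closed subcomplexes $f^{-1}(\overline{T_1})$ and $f^{-1}(\overline{T_2})$, where $T_1,T_2$ are the two components of $T$ with the interior of $e$ removed. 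Then $X_1\cup X_2=X$, each $X_i$ is connected, and $X_1\cap X_2$ is a bounded neighbourhood of $W_e$. Moreover $X_i$ is quasi-isometric to the halfspace $\h_i$ of $G\acts T$ associated to $e$ (its $1$-skeleton is $\h_i$ up to a bounded neighbourhood, and the inclusion of the $1$-skeleton into $X_i$ is a quasi-isometry because the $2$-cells of $X$ have uniformly bounded boundary length), so by hypothesis each $X_i$ is one-ended.

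It then remains to show that $X_1\cap X_2$ has more than one end. Because $G$ is finitely generated and acts minimally on $T$, the quotient $G\backslash T$ is a finite graph; hence $G_e$ acts cocompactly on $W_e$ --- the image of $W_e$ in $X/G=P$ is the finite preimage in $P$ of the midpoint of the image of $e$ in $G\backslash T$, and $W_e/G_e$ injects into this image --- and therefore also on the bounded neighbourhood $X_1\cap X_2$ of $W_e$. As $X_1\cap X_2$ is connected and locally finite and $G_e$ is finitely generated, the \v{S}varc--Milnor lemma gives $X_1\cap X_2\simeq G_e$; by hypothesis $G_e$ has more than one end, so $X_1\cap X_2$ has more than one end and some finite subcomplex $K$ disconnects it into at least two unbounded pieces. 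Theorem~\ref{GeoSplit} now gives the conclusion.

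The step I expect to be the main obstacle is the second paragraph: producing $f$ with the required transversality and, crucially, connectedness of the walls (the usual Dunwoody-track argument, which should already be embedded in Section~\ref{sec:halfspaces}), and then verifying that the halfspace subcomplexes $X_i$ obtained this way are genuinely quasi-isometric to the halfspaces $\h_i$ as defined in Section~\ref{sec:halfspaces}, so that one-endedness passes from $\h_i$ to $X_i$. Everything in the third paragraph --- the $G_e$-cocompactness of $X_1\cap X_2$ and the application of the \v{S}varc--Milnor lemma --- is then routine, as is the final appeal to Theorem~\ref{GeoSplit}.
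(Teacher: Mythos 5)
Your overall strategy --- realize $G$ by $X=\widetilde P$, produce connected one-ended subcomplexes $X_1,X_2$ covering $X$ whose intersection is $G_e$-cocompact, and feed that into Theorem~\ref{GeoSplit} together with Theorem~\ref{GMpro} --- is the same as the paper's. The third paragraph (cocompactness of the $G_e$-action on $X_1\cap X_2$, \v{S}varc--Milnor, multi-endedness of $X_1\cap X_2$) is correct and matches the paper in spirit.

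The genuine gap is exactly where you flag it, but it is not covered by Section~\ref{sec:halfspaces}. You want a $G$-equivariant map $f\colon X\to T$, transverse to edge midpoints, with \emph{connected} wall preimages $W_e=f^{-1}(m_e)$ inside the $2$-complex $X$. Section~\ref{sec:halfspaces} only provides this for graphs: the tree-of-spaces construction in Definition~\ref{defn:treeofspaces} and Lemma~\ref{lem:independent} produce a $1$-complex $X$ with a map $p\colon X\to T$ whose preimages of vertices and edge midpoints are connected, and the argument cited there (via \cite[Proposition~8.2]{MargolisShepherdStarkWoodhouse23}) is specific to that $1$-dimensional setting. Achieving connected preimages (tracks) for a $G$-equivariant map from a simply connected $2$-complex to $T$ is the content of Dunwoody's track theory and is not a routine perturbation; you would need to import that machinery and verify it yields the walls and the quasi-isometry $X_i\simeq\h_i$ you need.

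The paper avoids this entirely: it starts with the Cayley graph $\Cay(G,S)$ (the $1$-skeleton), defines $\h_1,\h_2$ as in Definition~\ref{defn:halfspaceT} (connected for suitable $S$), sets $X_i$ to be the $L$-neighbourhood of $\h_i$, shows directly that $X_1\cap X_2$ is connected and $G_e$-cocompact for large $L$ (by bounding the tree-distance data $(d(f(x_1),e),d(f(x_2),e))$ of the edges $\Sigma_e$ straddling $\hat e$), and only \emph{then} attaches the $2$-cells of a finite presentation, observing that for large $L$ each attaching map lies entirely in $X_1$ or in $X_2$, so the $1$-skeleta of $X_1$, $X_2$, and $X_1\cap X_2$ are unchanged. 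This construction is elementary, needs no transversality or track theory, and delivers the same decomposition you want; replacing your second paragraph with this argument closes the gap.
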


Combining Theorem \ref{thm:oneendedhs} with Corollary \ref{Acyclic}, we get a further corollary about splittings over virtually free groups.

\begin{corollary} If $G$ is a finitely presented one-ended group that splits non-trivially over a virtually free group, then $H^2(G,\mathbb ZG)\ne \{0\}$. 
\end{corollary}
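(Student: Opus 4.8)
The plan is to combine the two upgrading/obstruction results already established: Theorem~\ref{thm:oneendedhs} converts the given splitting into one with one-ended halfspaces, and Corollary~\ref{Acyclic} then detects the non-vanishing of $H^2(G,\Z G)$. So suppose $G\acts T$ is a non-trivial splitting of $G$ in which every edge stabilizer is virtually free; as is standard in this setting (and as in the other corollaries of the introduction) the edge stabilizers are finitely generated, and a finitely generated virtually free group is finitely presented, hence accessible. Thus the hypotheses of Theorem~\ref{thm:oneendedhs} hold, and we obtain a non-trivial splitting $G\acts T'$ with minimal action whose halfspaces are one-ended, whose edge stabilizers are finitely generated and sit inside the (virtually free) edge stabilizers of $T$, and --- by part~(3) --- are vertex groups in finite splittings over finite subgroups of edge stabilizers of $T$.

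Next I would fix an edge $e'$ of $T'$ (there is one, since $T'$ is non-trivial) and check that $G_{e'}$ has more than one end. Being contained in an edge stabilizer of $T$, the group $G_{e'}$ is virtually free, so it suffices to see that it is infinite. If $G_{e'}$ were finite, then collapsing every edge orbit of $T'$ other than $G\cdot e'$ would yield an action of $G$ on a tree $T''$ with a single edge orbit and finite edge stabilizer $G_{e'}$; minimality of $G\acts T'$ forbids proper $G$-invariant subtrees, so the collapsed action $G\acts T''$ has no global fixed point and exhibits $G$ as a non-trivial amalgam or HNN extension over a finite group, contradicting one-endedness of $G$ (Stallings' theorem). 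Hence $G_{e'}$ is an infinite virtually free group and therefore has more than one end. I expect this verification --- that the edge stabilizers produced by Theorem~\ref{thm:oneendedhs} remain infinite --- to be the only point genuinely needing care; everything else is bookkeeping.

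Finally, I would apply Corollary~\ref{Acyclic} to $G\acts T'$ with the edge $e'$: the action is non-trivial and minimal, $G$ is one-ended and finitely presented, the edge stabilizers of $T'$ are finitely generated, $G_{e'}$ has more than one end, and the two halfspaces associated to $e'$ are one-ended. Corollary~\ref{Acyclic} then gives $H^2(G,\Z G)\neq\{0\}$, as desired. (When the original edge group happens to be two-ended one could shortcut the first step by invoking Corollary~\ref{cor:two-ended} in place of Theorem~\ref{thm:oneendedhs}, but the route above handles all virtually free edge groups uniformly.)
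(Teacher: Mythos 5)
Your proposal is correct and matches the route the paper intends, since the paper derives this corollary by combining Theorem~\ref{thm:oneendedhs} with Corollary~\ref{Acyclic} without spelling out the details. You correctly identify and dispatch the one point needing care --- that the edge stabilizers produced by Theorem~\ref{thm:oneendedhs} are infinite (and hence, being virtually free, have more than one end) --- via the collapse argument and Stallings' theorem, and you correctly observe that finitely generated virtually free groups are finitely presented and therefore accessible, which is what licenses the application of Theorem~\ref{thm:oneendedhs}.
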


By folding, one can transform a non-trivial splitting over a subgroup $C$ into a non-trivial splitting over a larger subgroup. We use this idea to produce an example (see Example \ref{Ex1}) of a finitely presented one-ended simply connected at infinity group $G$ and a non-trivial splitting of $G$ over an infinite-ended group. So, it is important to know when halfspaces are one-ended.

Perhaps the most important application of the notion of simple connectivity at infinity is to manifolds. The notion of simple connectivity at infinity was popularized by J. Stallings \cite{St62}, J. Munkres \cite{Mun60}, E. Luff \cite{Luf67} and L. Siebenmann \cite{Sie68} in the 1960's as a way of deciding when a contractible open $n$-manifold is homeomorphic to $\mathbb R^n$ for $n\geq 5$. Following the work of Freedman in dimension 4 (see Corollary1.2 of \cite{Fre82}) and Perelman in dimension 3 (see L. Husch and T. Price's paper \cite{HP70} and the addendum \cite{HP71}), these results were extended to all dimensions $n>2$. So for contractible open topological, PL and differential manifolds $M$ of dimension $n>2$ we have that $M$ is homeomorphic to $\mathbb R^n$ if and only if $M$ is simply connected at infinity. (Note that in dimension 4 there are exotic PL and differential structures on $\mathbb R^4$ - see S. Donaldson's paper \cite{Don83}.)  M. Davis \cite{Davis83} produced closed manifolds with infinite fundamental group in dimensions $n\geq 4$ whose universal covers were contractible, and he proved these universal covers are not simply connected at infinity (and hence not homeomorphic to $\mathbb R^n$). 
The fundamental group $G$ of a Davis manifold is a subgroup of finite index of a right angled Coxeter group $W$ and so many ``visual" splittings of $W$ (and $G$) are easily obtained. 
However, we have a restriction on splittings of such groups by the following corollary of Corollary \ref{Acyclic}.

\begin{corollary}
If $G$ is the fundamental group of a closed aspherical $n$-manifold (or more generally an $n$-dimensional duality group, see Section \ref{sec:examplesoneended}) with $n\geq3$, then there cannot be a splitting of $G$ over a group with more than one end where all the halfspaces are one-ended.
\end{corollary}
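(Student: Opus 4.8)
The plan is to obtain this as a direct consequence of Corollary \ref{Acyclic}, so the proof is a short bookkeeping argument once the relevant cohomological facts about duality groups are recalled. Recall from Section \ref{sec:examplesoneended} that an $n$-dimensional duality group $G$ with $n\geq 2$ is one-ended (it is infinite with $H^1(G,\mathbb ZG)=0$) and finitely presented, and that it satisfies $H^k(G,\mathbb ZG)=0$ for every $k\neq n$; for $G=\pi_1$ of a closed aspherical $n$-manifold these are exactly the statements that $G$ is a $PD_n$ group. In particular $H^2(G,\mathbb ZG)=\{0\}$ as soon as $n\geq 3$.

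Now suppose, for contradiction, that $G$ admits a splitting over a group $C$ with more than one end all of whose halfspaces are one-ended. Halfspaces are only defined when the edge stabilizers are finitely generated, so this splitting $G\acts T$ has finitely generated edge stabilizers. After replacing $T$ by its minimal subtree $T_{\min}$ we may assume the action is minimal and still non-trivial (alternatively, a splitting over $C$ may be taken to be a reduced one-edge splitting, whose Bass--Serre tree is already minimal). Choosing the basepoint of the orbit map inside $T_{\min}$, and using that $T_{\min}$ is $G$-invariant, the vertex set of the Cayley graph lying on a given side of an edge $e$ of $T_{\min}$ is the same whether computed in $T_{\min}$ or in $T$; hence the halfspaces of $G\acts T_{\min}$ at $e$ coincide with the halfspaces of $G\acts T$ at $e$, and in particular they are one-ended. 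Thus $G\acts T_{\min}$ is a non-trivial minimal splitting of the one-ended finitely presented group $G$, with finitely generated edge stabilizers, possessing an edge $e$ whose stabilizer (a conjugate of $C$) has more than one end and whose two associated halfspaces are one-ended. Corollary \ref{Acyclic} now yields $H^2(G,\mathbb ZG)\neq\{0\}$, contradicting the first paragraph. Hence no such splitting exists.

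The one step that deserves a little care is the reduction to a minimal action without losing one-endedness of the relevant halfspaces; the rest is immediate, since the real content has already been supplied by Corollary \ref{Acyclic} (and, behind it, Theorem \ref{GeoSplit}).
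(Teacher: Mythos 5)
Your proof is correct and takes exactly the route the paper intends: the statement is presented as a corollary of Corollary \ref{Acyclic}, and the only content is to recall that an $n$-dimensional duality group with $n\geq 3$ is one-ended, finitely presented, and has $H^2(G,\mathbb{Z}G)=\{0\}$, then contradict Corollary \ref{Acyclic}. Your extra care about passing to a minimal invariant subtree without changing the relevant halfspaces is a reasonable bit of bookkeeping (though for a one-edge non-trivial splitting, which is what ``splits over $C$'' typically means, the Bass--Serre tree action is already minimal), and the rest matches the paper's implicit argument.
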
 

The contrapositive of the corollary is also interesting: if a group $G$ of cohomological dimension $n\geq3$ splits over an infinite-ended group with all of the halfspaces one-ended, then $G$ is not a duality group.
We give some examples of such groups in Section \ref{sec:examplesoneended} (which are carefully chosen so that we cannot see another way of showing that these groups are not duality groups).

\S \ref{sec:halfspaces} contains three definitions for the halfspaces associated to a group splitting, along with a lemma to explain why they are equivalent (up to quasi-isometry).

\S \ref{sec:pocsets} contains background about pocsets, which is needed for the proofs of Theorems \ref{thm:oneendedhs} and \ref{thm:JSJ}.

\S \ref{sec:chopping} contains the proofs of Theorems \ref{thm:oneendedhs} and \ref{thm:JSJ}, both of which are deduced from Theorem \ref{thm:smallerstabs}. The latter theorem essentially says that if a splitting of a one-ended group contains a halfspace with more than one-end, then one can obtain a new splitting with smaller edge stabilizers. The basic idea behind the proof is to chop up the halfspace with more than one end.
From this we build a certain pocset, and the new splitting is obtained from the cubing of this pocset (which we show is a tree).

\S \ref{sec:scatinfty} begins with the definitions and connections between simple connectivity at infinity and various notions of first homology at infinity for topological spaces. Classical results show that these notions are preserved under any quasi-isometry or proper 2-equivalence between CW-complexes and can be extended to finitely presented groups. 

\S \ref{sec:acyclicthm} contains the proofs of Theorem \ref{GeoSplit} and Corollary \ref{Acyclic}. The proof of Theorem \ref{GeoSplit} is topological. We point out that it is a rare situation when $H^2(G,\mathbb ZG)$ is shown to be non-trivial by non-homological methods. 
%We first address the amalgamated product case. The HNN case is completely analogous. 

\S \ref{sec:artificial} begins with a lemma that describes how to alter a given non-trivial amalgamated product to obtain a non-trivial one with larger edge and vertex groups. Example \ref{Ex1} begins with a finitely presented group $G$ that is simply connected at infinity and splits as an amalgamated product with simply connected at infinity vertex groups and a one-ended edge group. A second splitting is constructed for $G$ with an infinite-ended vertex group and an infinite-ended edge group. So simply connected at infinity groups can be split non-trivially over infinite-ended groups. Corollary \ref{Acyclic} implies that a halfspace for this last splitting is not one-ended (so halfspaces for one-ended simply connected at infinity groups need not always be one-ended). 

\S \ref{sec:examplesoneended} developes two general yet elementary constructions of HNN extensions and amalgams with one-ended halfspaces (Propositions \ref{prop:stablecommute} and \ref{prop:double}). We then use these constructions to produce examples of HNN extensions and amalgams that are not duality groups, but which cannot be distinguished from duality groups using previous literature (to the best of our knowledge).

\textbf{Acknowledgements:}\,
We thank Alex Margolis for suggesting Example \ref{ex:notqiemb}.
And we are grateful for the referee's careful reading of our paper and their helpful comments and corrections.

\section{Group splittings and their halfspaces}\label{sec:halfspaces}

\begin{definition}
	A \emph{splitting} of a group $G$ is an action $G\acts T$ on a tree without edge inversions.
	The action is \emph{minimal} if $T$ has no proper invariant subtree.
	The splitting is \emph{non-trivial} if $G\acts T$ has no fixed point and \emph{finite} if the action $G\acts T$ is cocompact.
	Note that any splitting of a finitely generated group with minimal action is finite.
\end{definition}

\begin{notation}
	Whenever we talk about a graph $X$ (which will often be a tree), we denote the vertex and edge sets by $VX$ and $EX$ respectively.
\end{notation}

We now give three definitions for the halfspaces associated to a splitting of a finitely generated group $G\acts T$ with minimal action and finitely generated edge stabilizers, along with a lemma to explain why they are equivalent.
A halfspace is always a connected subgraph of a larger graph, and is considered as a metric space with respect to its induced path metric.
The first two definitions of halfspace are shorter and more intuitive, while the third is more convenient for the proof of Theorem \ref{thm:oneendedhs}.
Before giving the definitions, we note the following lemma, which implies that vertex stabilizers are always finitely generated when we have a splitting with halfspaces.

\begin{lemma}\cite[Lemma 8.32]{Cohen89}\\\label{lem:fgvertex}
	Any finite splitting $G\acts T$ of a finitely generated group with finitely generated edge stabilizers has finitely generated vertex stabilizers.
\end{lemma}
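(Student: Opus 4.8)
The plan is to translate the group action into the language of graphs of groups and then induct on the number of edges. Since the splitting is finite, the quotient graph $Y = G\backslash T$ is finite, and Bass--Serre theory identifies $G$ with the fundamental group $\pi_1(\mathbb{G},Y)$ of a finite graph of groups whose vertex and edge groups are the vertex and edge stabilizers of $T$. So the hypotheses become: $Y$ is finite, $\pi_1(\mathbb{G},Y)$ is finitely generated, and all edge groups are finitely generated; the goal is that all vertex groups are finitely generated. I would prove this by induction on $|EY|$. The base case $|EY| = 0$ is trivial, since then $G$ equals its unique vertex group. For the inductive step, pick an edge $e$ of $Y$. If $e$ is a bridge, then $G$ splits as an amalgamated product $\pi_1(\mathbb{G}_1) *_{G_e} \pi_1(\mathbb{G}_2)$, where $\mathbb{G}_1,\mathbb{G}_2$ are the two graphs of subgroups obtained by deleting $e$ (the two edge maps of $e$ providing the inclusions of $G_e$, which are injective). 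If $e$ is not a bridge (in particular if $e$ is a loop), then $G$ splits as an HNN extension $\pi_1(\mathbb{G}') *_{G_e}$, where $\mathbb{G}'$ is obtained by deleting $e$ and is still connected. In both cases the smaller graphs of groups have strictly fewer edges and the same vertex and edge groups as $\mathbb{G}$, so once we know the relevant $\pi_1$'s are finitely generated we may apply the inductive hypothesis to conclude that all vertex groups of $\mathbb{G}$ are finitely generated.

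The crux is therefore the following two-part \emph{sub-lemma}: if $A *_C B$ (resp.\ an HNN extension of $H$ over $C$) is finitely generated and $C$ is finitely generated, then $A$ and $B$ (resp.\ $H$) are finitely generated. I would prove the amalgam case as follows. Since $G = A*_C B$ is finitely generated it is countable, hence so is $A$; write $A = \bigcup_{i\ge 1} A_i$ as an increasing union of finitely generated subgroups chosen so that $C \le A_1$. For each $i$ the natural map $A_i *_C B \to A *_C B$ is injective by the normal form theorem for amalgams, so we obtain an increasing chain of subgroups $A_i *_C B \le G$ whose union contains both $A$ and $B$ and hence equals $G$. As $G$ is finitely generated, some $A_N *_C B$ is already all of $G$; intersecting with $A$ inside $G$ (again via normal forms) gives $A_N = A$, so $A$ is finitely generated, and likewise $B$. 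The HNN case is identical: write $H = \bigcup_{i\ge 1} H_i$ as an increasing union of finitely generated subgroups with both associated subgroups of the HNN extension contained in $H_1$, form the extensions $H_i *_C$ (which embed in $H *_C$ and exhaust it since together they contain $H$ and the stable letter), deduce one of them is all of $G$, and intersect with $H$ (via Britton's lemma) to conclude $H$ is finitely generated.

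Putting these together yields the lemma. The step I expect to require the most care is the inductive reduction: one must check that after deleting $e$ the resulting pieces are genuine finite, nonempty, connected graphs of groups with strictly fewer edges, that the edge maps of $e$ really do exhibit $G$ as an amalgam or HNN extension over $G_e$ (using injectivity of edge maps in a graph of groups and the fact that vertex groups embed in the fundamental group), and that the vertex and edge groups of the smaller graphs of groups coincide with those of $\mathbb{G}$ so the inductive hypothesis applies verbatim. A cleaner but slightly less elementary alternative would be to avoid the induction altogether: replace a single vertex group $G_w$ of $\mathbb{G}$ by an increasing union of finitely generated subgroups, each containing the finitely many finitely generated images of the incident edge groups, observe that the resulting sub-graph-of-groups inclusions induce an increasing chain of subgroups of $G$ exhausting $G$, and derive a contradiction from finite generation of $G$ exactly as above; this version requires knowing that such sub-graph-of-groups maps are injective on fundamental groups and meet $G_w$ in precisely the prescribed subgroup.
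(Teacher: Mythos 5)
The paper does not prove this lemma itself; it cites Cohen's Lemma 8.32 and moves on, so there is no in-paper proof to compare against. Your argument is correct and self-contained. The reduction to the quotient graph of groups and induction on $|EY|$, splitting into the bridge/amalgam case versus the non-bridge/HNN case, is the right way to reduce to a single edge. The sub-lemma is the crux, and your proof of it is sound: writing $A=\bigcup_i A_i$ as an increasing union of finitely generated subgroups with $C\leq A_1$, noting that the subgroups $\langle A_i,B\rangle\cong A_i*_C B$ of $G$ form an increasing chain exhausting $G$, concluding from finite generation of $G$ that some $A_N*_C B=G$, and finally extracting $A=A\cap(A_N*_C B)=A_N$ from the normal form theorem (and, in the HNN case, from Britton's lemma). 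The one step that deserves the explicit care you flag is that $A\cap\langle A_N,B\rangle=A_N$ inside $G$; this follows because a reduced alternating word in $A_N*_C B$ of syllable length $\geq 2$ is already reduced in $A*_C B$ and hence cannot lie in $A$, while a length-one syllable in $B-C$ cannot lie in $A$ since $A\cap B=C$. Your alternative non-inductive variant at the end (exhausting a single vertex group and using injectivity of sub-graph-of-groups inclusions) is also valid and is arguably closer in spirit to how one would prove this directly from the Bass--Serre tree.
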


\begin{definition} (First definition of halfspaces)\\\label{defn:halfspaceamal}
	Let $G=A*_C B$ be a non-trivial amalgam with $A, B$ and $C$ finitely generated.
	Given finite generating sets $S_A$ and $S_B$ for $A$ and $B$, we know that $S=S_A\cup S_B$ is a finite generating set for $G$.
	Let $\Cay(G,S)$ denote the Cayley graph of $G$ with respect to $S$.
	Define $\h_A$ (resp. $\h_B$) to be the induced subgraph of $\Cay(G,S)$ whose vertices are the elements of $G$ whose normal form starts with an element of $A$ (resp. an element of $B$).
	We refer to $\h_A, \h_B$ and all of their $G$-translates as \emph{halfspaces} of the splitting $G=A*_C B$.
	Note that these halfspaces are connected subgraphs of $\Cay(G,S)$, and $\h_A\cap \h_B$ is the induced subgraph of $C$.
	
	A similar definition can be made for HNN extensions, as follows.
	Suppose $G=A*_C$ is an HNN extension, with $A$ and $C$ finitely generated. Let $S_A$ be a finite generating set for $A$, and let $t$ be the stable letter of the HNN extension. Let $\Cay(G,S)$ denote the Cayley graph of $G$ with respect to $S=S_A\cup\{t\}$.
	Define the halfspace $\h^+$ to be the induced subgraph of $\Cay(G,S)$ whose vertices are elements of $G$ with normal forms that do not start with an element of $C$ followed by a negative power of $t$.
	Define the halfspace $\h^-$ to be the induced subgraph of $\Cay(G,S)$ whose vertices are either elements of $C$ or elements of $G$ with normal forms that do start with an element of $C$ followed by a negative power of $t$.
	We refer to $\h^+, \h^-$ and all of their $G$-translates as \emph{halfspaces} of the splitting $G=A*_C$.
	Note that these halfspaces are connected subgraphs of $\Cay(G,S)$, and $\h^+\cap \h^-$ is the induced subgraph of $C$.	
\end{definition}

\begin{remark}\label{remk:CayleyC}
	In Definition \ref{defn:halfspaceamal}, if one chooses the finite generating set $S$ to contain a generating set for $C$, then $\h_A\cap \h_B$ (or  $\h^+\cap \h^-$ in the HNN case) is connected and is isomorphic to a Cayley graph of $C$.
\end{remark}

The second and third definitions of halfspace require the notion of halfspaces in a tree.

\begin{definition}\label{defn:treehalfspaces}
	Let $T$ be a tree. Denote the midpoint of an edge $e$ by $\hat{e}$. Define a \emph{halfspace} of $T$ to be the union of $\hat{e}$ with one of the components of $T-\hat{e}$.
	Let $\cH(T)$ denote the set of halfspaces of $T$.
\end{definition}

\begin{definition} (Second definition of halfspaces)\\\label{defn:halfspaceT}
Let $G\acts T$ be a splitting of a finitely generated group $G$ with minimal action and finitely generated edge stabilizers. 
Fix an orbit map $f:G\to VT$. Let $S$ be a finite generating set for $G$ such that the induced subgraph of $f^{-1}(\h)$ in $\Cay(G,S)$ is connected for every halfspace $\h\in\cH(T)$ -- such $S$ exists by Lemma \ref{lem:Sexists} below.
These induced subgraphs are called the \emph{halfspaces} of $G\acts T$ (with respect to $S$).
If $\h$ is bounded by the edge midpoint $\hat{e}$ then we say that the halfspace of $G\acts T$ induced by $f^{-1}(\h)$ is a halfspace \emph{associated to $e$}.
\end{definition}

\begin{lemma}\label{lem:Sexists}
	Let $G\acts T$ be a splitting of a finitely generated group $G$ with minimal action and finitely generated edge stabilizers. 
	Fix an orbit map $f:G\to VT$. There exists a finite generating set $S$ for $G$ such that the induced subgraph of $f^{-1}(\h)$ in $\Cay(G,S)$ is connected for every halfspace $\h\in\cH(T)$.
\end{lemma}
\begin{proof}
Let $\h\in\cH(T)$ be a halfspace associated to an edge $e\in ET$.
We will find a finite generating set $S$ for $G$ such that the induced subgraph of $f^{-1}(\h)$ in $\Cay(G,S)$ is connected.
The lemma then follows by repeating the argument for a finite set of $G$-orbit representatives in $\cH(T)$, and taking the union of the corresponding finite generating sets for $G$.

Let $S_0$ be a finite symmetric generating set for $G$, and let
\begin{equation}\label{df1dfs}
	R=\max\{d(f(1),f(s))\mid s\in S_0\}=\max\{d(f(g),f(gs))\mid g\in G, s\in S_0\}.
\end{equation}
Let $\h^*\in\cH(T)$ be the halfspace which is complementary to $\h$, and define
\begin{equation}
	X_e=\{(g,gs)\in G^2\mid f(g)\in \h, f(gs)\in\h^*, s\in S_0\}.
\end{equation}
Since the orbit map $f$ is $G$-equivariant, the left action of $G_e$ on $G^2$ stabilizes $X_e$.
We claim that $X_e$ contains only finitely many $G_e$-orbits.
Indeed, suppose $s\in S_0$ and $(g_1,g_1s),(g_2,g_2s)\in X_e$ with $d(f(g_1),e)=d(f(g_2),e)$.
The element $g_2g_1^{-1}$ maps the geodesic in $T$ from $f(g_1)$ to $f(g_1s)$ onto the geodesic from $f(g_2)$ to $f(g_2s)$.
Since $d(f(g_1),e)=d(f(g_2),e)$, it must be that $g_2g_1^{-1}\in G_e$, so $(g_1,g_1s),(g_2,g_2s)\in X_e$ are in the same $G_e$-orbit.
For $(g,gs)\in X_e$, there are only finitely many possibilities for the choice of $s\in S_0$, and the distance $d(f(g),e)$ is bounded by $R$ from (\ref{df1dfs}) because $e$ lies on the geodesic in $T$ from $f(g)$ to $f(gs)$.
This proves the claim that $X_e$ contains only finitely many $G_e$-orbits.

Now let $\{(g_i,g_is_i)\}_{i=1}^n$ be a finite set of $G_e$-orbit representatives in $X_e$.
Let $S_e$ be a finite generating set for $G_e$,  and let 
\begin{equation}
	S_e'=\bigcup_{i=1}^ng_i^{-1}S_e g_i\quad\text{and}\quad S_e^\#=\{g_i^{-1}g_j\mid i,j\in\{1,\dots ,n\}\}.
\end{equation}
We claim that, for each $i=1,\dots,n$ and each $h\in G_e$, there is a path from $g_i$ to $hg_i$ in $\Cay(G,S_0\cup S_e')$ that stays in $f^{-1}(\h)$.
Indeed, write $h=s_1s_2\cdots s_k$ with $s_j\in S_e$.
Then the desired path is
\begin{align*}
&(g_i,g_i(g_i^{-1}s_1g_i),g_i(g_i^{-1}s_1g_i)(g_i^{-1}s_2g_i),\dots,g_i(g_i^{-1}s_1g_i)(g_i^{-1}s_2g_i)\cdots(g_i^{-1}s_kg_i))\\
= &(g_i,s_1g_i,s_1s_2g_i,\dots,hg_i).
\end{align*}
Let $S=S_0\cup S_e'\cup S_e^\#$.
It follows from the previous claim that, for any $(g,gs),(g',g's')\in X_e$, there is a path from $g$ to $g'$ in $\Cay(G,S)$ that stays in $f^{-1}(\h)$.

We now complete the proof of the lemma by showing that the induced subgraph of $f^{-1}(\h)$ in $\Cay(G,S)$ connected.
Indeed, let $g,g'\in f^{-1}(\h)$.
Take a path $\gamma$ in $\Cay(G,S_0)$ from $g$ to $g'$, and consider it as a path in $\Cay(G,S)$.
Each maximal subpath of $\gamma$ that lies in $f^{-1}(\h^*)$ must be preceded by an edge $(g'',g''s'')\in f^{-1}(\h)\times f^{-1}(\h^*)$ and immediately followed by an edge $(g''',g'''s''')\in f^{-1}(\h^*)\times f^{-1}(\h)$ (with $s'',s'''\in S_0$).
Since $S_0$ is symmetric, we have $(g'',g''s''),(g'''s''',g''')\in X_e$.
Hence it follows from the arguments above that the subpath of $\gamma$ from $g''$ to $g'''s'''$ can be replaced by a path in $f^{-1}(\h)$.
Doing this for every maximal subpath of $\gamma$ that lies in $f^{-1}(\h^*)$, we obtain a path from $g$ to $g'$ in $f^{-1}(\h)$, as required.
\end{proof}

\begin{definition} (Third definition of halfspaces)\\\label{defn:treeofspaces}
	Let $G\acts T$ be a splitting of a finitely generated group $G$ with minimal action and finitely generated edge stabilizers.
	A \emph{tree of spaces} for $G\acts T$ is a locally finite graph $X$ equipped with a geometric action of $G$, and a surjective $G$-equivariant map $p:X\to T$, such that:
	\begin{enumerate}
		\item $p$ is simplicial with respect to the first barycentric subdivision of $T$, and
		\item the $p$-preimages of vertices and edge midpoints in $T$ are connected.
	\end{enumerate} 
A tree of spaces can be constructed by modifying a Cayley graph of $G$, for instance apply \cite[Proposition 8.2]{MargolisShepherdStarkWoodhouse23} to $G$ with the discrete topology.
	Define a \emph{halfspace} of $X$ to be the preimage $p^{-1}(\h)$ of a halfspace $\h\in\cH(T)$.
Write $\cH(X)$ for the set of halfspaces of $X$.
	Note that halfspaces of $X$ are connected subgraphs of $X$.	
\end{definition}

In the following three lemmas we prove that these three definitions of halfspace are well-defined and equivalent up to quasi-isometry.
This is not as straightforward to prove as one might imagine, since the inclusion of a halfspace $\h\xhookrightarrow{}\Cay(G,S)$ (say for the second definition) is not necessarily a quasi-isometric embedding (see Example \ref{ex:notqiemb}).

\begin{lemma}\label{lem:preindependent}
Let $G\acts T$ be a splitting of a finitely generated group $G$ with minimal action and finitely generated edge stabilizers. 
Fix a finite generating set $S$ and an orbit map $f:G\to VT$. Let $p:X\to T$ be a tree of spaces as in Definition \ref{defn:treeofspaces}.
Fix an edge $e\in ET$ and let $\h\in\cH(T)$ be one of the halfspaces associated to $e$.
Suppose that the induced subgraph $\h_1$ of $f^{-1}(\h)$ in $\Cay(G,S)$ is connected (but make no assumption regarding the other halfspaces of $T$).
Let $\h_2=p^{-1}(\h)\subseteq X$ be the halfspace corresponding to $\h$ from Definition \ref{defn:treeofspaces}.
Let $i:G\to VX$ be an orbit map for the action of $G$ on $X$ such that the following diagram commutes.
\begin{equation}
	\begin{tikzcd}[
		ar symbol/.style = {draw=none,"#1" description,sloped},
		isomorphic/.style = {ar symbol={\cong}},
		equals/.style = {ar symbol={=}},
		subset/.style = {ar symbol={\subset}}
		]
		G\ar{dr}[swap]{f}\ar{r}{i}&X\ar{d}{p}\\
		&T
	\end{tikzcd}
\end{equation}
Then the map $i:(V\h_1,d_{\h_1})\to (V\h_2,d_{\h_2})$ is a quasi-isometry, where $d_{\h_1},d_{\h_2}$ denote the induced path metrics on $\h_1,\h_2$ respectively.
\end{lemma}
\begin{proof}
Denote the metrics on $\Cay(G,S)$ and $X$ by $d_S$ and $d_X$ respectively.
The Milnor--Svarc Lemma implies that $i:(G,d_S)\to(X,d_X)$ is a quasi-isometry. Say $C\geq1$ is such that
\begin{equation}\label{CCQI}
	C^{-1}d_S(g,g')-C\leq d_X(i(g),i(g'))\leq Cd_S(g,g')+C
\end{equation}
for all $g,g'\in G$, and such that the $C$-neighborhood of $i(G)$ is equal to the whole of $X$.

Let $\hh_2:=p^{-1}(\hat{e})\subseteq\h_2$.
By Definition \ref{defn:treeofspaces}, $G$ acts cocompactly on $X$, hence $G_e$ acts cocompactly on $\hh_2$.
As $X$ is locally finite, $G_e$ also acts cocompactly on the $4C$-neighborhood of $\hh_2$.
It follows that there is a constant $L$ such that, for any $v,v'\in V\h_2$, if $d_X(v,v')\leq 2C$ and $d_X(v,\hh_2)\leq2C$ then $d_{\h_2}(v,v')\leq L$.
We may also assume $L\geq2C$.
Now consider adjacent vertices $g,g'\in V\h_1$.
By (\ref{CCQI}), we have $d_X(i(g),i(g'))\leq Cd_S(g,g')+C= 2C$.
Note that any path leaving $\h_2$ must cross $\hh_2$, so if $d_X(i(g),\hh_2)\geq2C$ then $i(g')\in\h_2$ and $d_{\h_2}(i(g),i(g'))=d_X(i(g),i(g'))\leq2C$.
Otherwise, if $d_X(i(g),\hh_2)<2C$ then $d_{\h_2}(i(g),i(g'))\leq L$ by definition of $L$.
In either case, we have $d_{\h_2}(i(g),i(g'))\leq L$.
By mapping paths in $\h_1$ into $X$, we deduce that 
\begin{equation}\label{d2leq}
	d_{\h_2}(i(g),i(g'))\leq Ld_{\h_1}(g,g')
\end{equation}
for all $g,g'\in V\h_1$.

Since $G_e$ acts cocompactly on the $C$-neighborhood of $\hh_2$, and $G_e$ stabilizes $\h_1$, there is a constant $K$ such that, for any $v\in V\h_2$, if $d_X(v,\hh_2)\leq C$ then $d_{\h_2}(v,i(V\h_1))\leq K$.
We may also assume $K\geq C$.
We now claim that 
\begin{equation}\label{coarsesurj}
	d_{\h_2}(v,i(V\h_1))\leq K
\end{equation} 
for all $v\in V\h_2$.
Indeed, if $d_X(v,\hh_2)\leq C$ then the claim follows by definition of $K$.
Otherwise, if $d_X(v,\hh_2)>C$, then take $g\in G$ with $d_X(v,i(g))\leq C$ (which exists by definition of $C$).
The shortest path from $v$ to $i(g)$ does not cross $\hh_2$, so it stays within $\h_2$, which means $g\in\h_1$ and $d_{\h_1}(v,i(g))=d_X(v,i(g))\leq C\leq K$.

Put $D=2C^2(3K+1+C)+K$.
Using properness of the $G$-action on $X$, and the fact that $G_e$ acts cocompactly on the $(D+K)$-neighborhood of $\hh_2$, there is a constant $M$ such that, for any $g,g'\in V\h_1$, if $d_X(i(g),\hh_2)\leq D+K$ and $d_X(i(g),i(g'))\leq 2K+1$ then $d_{\h_1}(g,g')\leq M$.
And we may assume $M\geq D$.
Now take $g,g'\in V\h_1$.
Let $i(g)=v_0,v_1,\dots,v_n=i(g')$ be the vertices of a shortest path in $\h_2$ from $i(g)$ to $i(g')$.
For each $v_j$, take $g_j\in V\h_1$ with $d_{\h_1}(v_j,i(g_j))\leq K$ (which exists by (\ref{coarsesurj})). We may assume that $g_0=g$ and $g_n=g'$.
Now consider a pair of consecutive vertices $v_j, v_{j+1}$.
Note that
\begin{equation}\label{digj}
	d_X(i(g_j),i(g_{j+1}))\leq d_X(i(g_j),v_j)+d_X(v_j,v_{j+1})+d_X(v_{j+1},i(g_{j+1}))\leq2K+1.
\end{equation}
If one of $v_j, v_{j+1}$ is in the $D$-neighborhood of $\hh_2$, then one of $i(g_j),i(g_{j+1})$ is in the $(D+K)$-neighborhood of $\hh_2$, so $d_{\h_1}(g_j,g_{j+1})\leq M$ by (\ref{digj}) and definition of $M$.
Now suppose neither $v_j$ nor $v_{j+1}$ is in the $D$-neighborhood of $\hh_2$.
By (\ref{CCQI}) and (\ref{digj}) we have $d_S(g_j,g_{j+1})\leq C(2K+1+C)$.
Moreover, any shortest path $\gamma$ in $\Cay(G,S)$ from $g_j$ to $g_{j+1}$ will have $i$-image contained in the $2C^2(3K+1+C)$-neighborhood of $i(g_j)$ (again using (\ref{CCQI})), which is contained in $\h_2$ since $$d_X(i(g_j),\hh_2)\geq d_X(v_j,\hh_2)-d_X(v_j,i(g_j))> D-K=2C^2(3K+1+C).$$
So $\gamma\subseteq\h_1$ and 
$$d_{\h_1}(g_j,g_{j+1})=d_S(g_j,g_{j+1})\leq C(2K+1+C)<D\leq M.$$
In either case we get $d_{\h_1}(g_j,g_{j+1})\leq M$, hence
\begin{equation}\label{d1leq}
	d_{\h_1}(g,g')\leq Mn=Md_{\h_2}(i(g),i(g')).
\end{equation}

The combination of (\ref{d2leq}), (\ref{coarsesurj}) and (\ref{d1leq}) tells us that $i:(V\h_1,d_{\h_1})\to (V\h_2,d_{\h_2})$ is a quasi-isometry, as required.
\end{proof}

\begin{lemma}\label{lem:independent}
	The second and third definitions for halfspaces given above are coarsely well-defined and coarsely equivalent.
	More precisely, if we fix a splitting $G\acts T$ and a halfspace $\h$ of $T$, then the halfspace of $G\acts T$ corresponding to $\h$ remains the same up to quasi-isometry when we
	\begin{enumerate}
		\item\label{item:change1} switch between the two definitions of halfspace,
		\item\label{item:change2} change the finite generating set for $G$ or the orbit map $f$ from Definition \ref{defn:halfspaceT}, or
		\item\label{item:change3} change the graph $X$ from Definition \ref{defn:treeofspaces}.
	\end{enumerate}
	In particular, the notion of the halfspaces of a splitting $G\acts T$ being one-ended is well-defined.

\end{lemma}
\begin{proof}	
	Let $S$ be a finite generating set for $G$ as in Definition \ref{defn:halfspaceT} (so the induced subgraph of $f^{-1}(\h)$ in $\Cay(G,S)$ is connected for every halfspace $\h\in\cH(T)$).
	Let $\h\in\cH(T)$ be a halfspaces of $T$.
	Let $\h_1\subset\Cay(G,S)$ and $\h_2\subset X$ be the halfspaces corresponding to $\h$ from Definitions \ref{defn:halfspaceT} and \ref{defn:treeofspaces} respectively.	
	Lemma \ref{lem:preindependent} implies that $\h_1$ and $\h_2$ are quasi-isometric, which proves item \ref{item:change1}.
	Moreover, we can keep using Lemma \ref{lem:preindependent} to show that $\h_1$ and $\h_2$ remain the same up to quasi-isometry when making the changes indicated in \ref{item:change2} and \ref{item:change3}.
	For instance, if $S'$ is another finite generating set for $G$ as in Definition \ref{defn:halfspaceT}, and $\h'_1\subseteq\Cay(G,S')$ is the halfspace corresponding to $\h$ and $S'$ from Definition \ref{defn:halfspaceT}, then Lemma \ref{lem:preindependent} implies that the halfspaces $\h_1$ and $\h'_1$ are both quasi-isometric to $\h_2$, hence they are quasi-isometric to each other.
	Similarly, the halfspace $\h_1$ remains the same up to quasi-isometry if we change the orbit map $f$ from Definition \ref{defn:halfspaceT}, and the halfspace $\h_2$ remains the same up to quasi-isometry if we change the graph $X$ from Definition \ref{defn:treeofspaces}.	
\end{proof}

\begin{lemma}
	The first definition of halfspace is coarsely well-defined and coarsely equivalent to the second definition in the following sense.
	\begin{enumerate}
		\item\label{item:amalThspace} Let $G=A*_C B$ be an amalgam (resp. $G=A*_C$ an HNN extension) as in Definition \ref{defn:halfspaceamal}.
		Then the halfspaces $\h_A,\h_B$ (resp. $\h^+,\h^-$) from Definition \ref{defn:halfspaceamal} remain the same up to quasi-isometry when we change the choice of finite generating sets $S_A,S_B$ (resp. $S_A$).
		Furthermore, if $G\acts T$ is the Bass-Serre tree associated to the given amalgam or HNN decomposition of $G$, then each halfspace from Definition \ref{defn:halfspaceamal} is quasi-isometric to a halfspace of $G\acts T$ from Definition \ref{defn:halfspaceT}, and vice versa.
		\item Let $G\acts T$ be a splitting of a finitely generated group $G$ with minimal action and finitely generated edge stabilizers.
		Fix an edge $e\in ET$ and let $\h\in\cH(T)$ be one of the halfspaces associated to $e$.
		Then $G$ splits as either an amalgam $G=A*_C B$ or an HNN extension $G=A*_C$, with $C=G_e$ in either case, and so that the halfspace from Definition \ref{defn:halfspaceT} corresponding to $\h$ is quasi-isometric to one of the halfspaces $\h_A,\h_B,\h^+,\h^-$ from Definition \ref{defn:halfspaceamal}.
	\end{enumerate}	
\end{lemma}
\begin{proof}
	\begin{enumerate}
		\item We give the proof for the case where $G=A*_C B$ is an amalgam (the HNN case is similar).
		Let $G\acts T$ be the corresponding Bass--Serre tree and let $v_A,v_B\in VT$ be the vertices stabilized by $A$ and $B$ respectively. Let $e$ be the edge joining $v_A$ and $v_B$ and let $\h\subseteq T$ (resp. $\h^*$) be the halfspace bounded by $\hat{e}$ that contains $v_A$ (resp. $v_B$).
		Let $f:G\to VT$ be the orbit map given by $g\mapsto gv_A$ and let $S=S_A\cup S_B$ be the finite generating set for $G$ from Definition \ref{defn:halfspaceamal}.
		It is not hard to show that the halfspace $\h_A$ from Definition \ref{defn:halfspaceamal} is precisely the induced subgraph of $f^{-1}(\h)$ in $\Cay(G,S)$.
		Then, by Lemma \ref{lem:preindependent}, $\h_A$ is quasi-isometric to the halfspace of $G\acts T$ corresponding to $\h$ from Definition \ref{defn:treeofspaces}.
		And then, by Lemma \ref{lem:independent}, $\h_A$ is also quasi-isometric to the halfspace of $G\acts T$ corresponding to $\h$ from Definition \ref{defn:halfspaceT} (with respect to any choice of orbit map and any suitable finite generating set $S'$ for $G$).
		An analogous argument shows that the halfspace $\h_B$ from Definition \ref{defn:halfspaceamal} is quasi-isometric to the halfspace from Definition \ref{defn:halfspaceT} corresponding to $\h^*$.
		Of course the $G$-translate of a given halfspace (with either definition) is isometric to the given halfspace, hence every halfspace from Definition \ref{defn:halfspaceamal} is quasi-isometric to a halfspace from Definition \ref{defn:halfspaceT}, and vice versa.
		Finally, the halfspaces $\h_A,\h_B$ from Definition \ref{defn:halfspaceamal} remain the same up to quasi-isometry when we change the choice of finite generating sets $S_A,S_B$ because the halfspaces from Definition \ref{defn:halfspaceT} are coarsely well-defined (Lemma \ref{lem:independent}).
		
		\item Let $T'$ be obtained from $T$ by collapsing all edges which are not in the orbit of $e$.
		Then $G\acts T'$ is a splitting with a single orbit of edges, so it corresponds to either an amalgam $G=A*_C B$ or an HNN extension $G=A*_C$, with $C=G_e$ in either case.
		Let $e'\in ET$ and $\h'\in\cH(T')$ be the images of $e$ and $\h$ in $T'$.
		If we choose orbit maps $f:G\to VT$ and $f':G\to VT'$ so that $f'$ is the composition of $f$ and the collapse map $T\to T'$, then $f^{-1}(\h)=(f')^{-1}(\h')$, so the halfspace for $G\acts T$ from Definition \ref{defn:halfspaceT} corresponding to $\h$ is identical to the halfspace for $G\acts T'$ from Definition \ref{defn:halfspaceT} corresponding to $\h'$ (assuming we work in the same Cayley graph of $G$ in both cases).
		Then by part \ref{item:amalThspace} of the lemma, these halfspaces are quasi-isometric to either $\h_A,\h_B,\h^+$ or $\h^-$ from Definition \ref{defn:halfspaceamal}.\qedhere		
	\end{enumerate}
\end{proof}

\begin{example}\label{ex:notqiemb}
	Here is an example to show that halfspaces of group splittings are not necessarily quasi-isometrically embedded.
	Let $S$ be a closed surface with negative Euler characteristic and let $\phi:\pi_1(S)\to\pi_1(S)$ be an automorphism induced by a pseudo-Anosov homeomorphism of $S$.
	Let $G=\pi_1(S)\rtimes_\phi\Z$ be the semidirect product induced by $\phi$, and let $p:G\to\Z$ be the projection to the $\Z$ factor.
	Note that $G$ is a hyperbolic group \cite{Thurston22} (indeed it is the fundamental group of a closed hyperbolic 3-manifold).
	The map $p$ induces a splitting of $G$ (an action on $\R$), and the halfspaces correspond to the preimages $p^{-1}((-\infty,n])$ and $p^{-1}([n,\infty))$ for $n\in\Z$ (and their induced subgraphs in the Cayley graph of $G$).
	We claim that at least one of the halfspaces $p^{-1}((-\infty,0])$ and $p^{-1}([0,\infty))$ is not quasi-isometrically embedded in $G$.
	Indeed, suppose they are both quasi-isometrically embedded.
	Then for any $g_1,g_2\in p^{-1}(0)$, there exist quasi-geodesics $\gamma^-$ and $\gamma^+$ (with respect to the metric on $G$) that lie in $p^{-1}((-\infty,0])$ and $p^{-1}([0,\infty))$ respectively and that connect $g_1$ to $g_2$.
	Moreover, the QI-constants of $\gamma^-$ and $\gamma^+$ are independent of $g_1,g_2$.
	By the stability of quasi-geodesics in hyperbolic metric spaces, the quasi-geodesics $\gamma^-$ and $\gamma^+$ are uniformly close to each other, hence uniformly close to $p^{-1}(0)$.
	This implies that $p^{-1}(0)$ is a quasi-convex subgroup of $G$.
	But this is a contradiction, since an infinite normal subgroup of a hyperbolic group is quasiconvex if and only if it has finite index \cite{MihalikTowle94}.
\end{example}

We finish this section with a lemma about the connection between one-endedness of vertex stabilizers and one-endedness of halfspaces.

\begin{lemma}\label{lem:oneendedvertexgroup}
	Let $G\acts T$ be a splitting of a one-ended group $G$ with minimal action and finitely generated edge stabilizers.
	Let $\h\subseteq T$ be a halfspace bounded by an edge midpoint $\hat{e}$, and let $v\in VT$ be the unique vertex in $\h$ incident to $e$.
	If $G_v$ is one-ended then the halfspace of $G\acts T$ corresponding to $\h$ is one-ended.
\end{lemma}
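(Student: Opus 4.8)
The plan is to use the third (tree of spaces) definition of halfspace, since it is the most geometric and makes the relationship with vertex stabilizers transparent. Fix a tree of spaces $p:X\to T$ for $G\acts T$ as in Definition \ref{defn:treeofspaces}, so $X$ is locally finite with a geometric $G$-action, and let $\h\subseteq T$ be the given halfspace bounded by $\hat e$ containing the vertex $v$. Write $H:=p^{-1}(\h)\subseteq X$ for the corresponding halfspace of $X$; by Lemma \ref{lem:independent} it suffices to show $H$ is one-ended. Note first that $H$ is infinite: it contains $p^{-1}(v)$, which is a connected set on which the infinite group $G_v$ acts cocompactly (and $G_v$ is infinite because $G$ is one-ended, so $G_v$ cannot be finite — a vertex group in a minimal splitting of an infinite group is infinite). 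So we must rule out two or more ends.

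The key geometric input is that $X$ is one-ended, since $X$ is quasi-isometric to $G$ by the Milnor--Svarc Lemma and $G$ is one-ended. So given any finite subgraph $F\subseteq H$, the complement $X - F$ has exactly one unbounded component $U$; the goal is to show that $H - F$ also has exactly one unbounded component, which will follow if I can show that every vertex of $H - F$ lies in the same unbounded component of $H - F$. First I would enlarge $F$ to a finite subgraph $F'\subseteq X$ that contains $F$ and is $G_e$-cocompactly convenient: concretely, since $G_e$ acts cocompactly on $\hat\h:=p^{-1}(\hat e)$ (and on neighborhoods thereof), and $F$ is finite, after enlarging $F$ within $X$ I may assume $F\cap \hat\h$ separates $\hat\h$ into pieces all of whose ``outward'' ends are controlled — more precisely, I want that any two vertices of $\h$'s preimage lying outside $F$ and near $\hat\h$ can be connected inside $H - F$ by a path that stays near $\hat\h\cup p^{-1}(v)$. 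This is where the hypothesis that $G_v$ is one-ended enters: $p^{-1}(v)$, with $G_v$ acting geometrically, is one-ended, so its complement of any finite set has a single unbounded component, and this lets me route connecting paths through $p^{-1}(v)$ rather than out through the rest of $X$ across $\hat e$.

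With that setup, take two vertices $x_1,x_2$ in distinct components of $H - F$; I claim at least one of them lies in a bounded component, by the following argument. If both $x_i$ lie in unbounded components of $H-F$, then in $X-F$ they can be joined by a path $\gamma$ (since $X$ is one-ended, both lie in the unique unbounded component $U$ once we also check they are unbounded in $X$, which holds because an unbounded subset of $H$ is unbounded in $X$ by the Milnor--Svarc quasi-isometry). The path $\gamma$ may leave $H$, but every maximal subpath of $\gamma$ outside $H$ has both endpoints on $\hat\h$. Using $G_e$-cocompactness of a neighborhood of $\hat\h$ together with one-endedness of $p^{-1}(v)$, I can replace each such excursion by a detour that stays inside $H$ and avoids $F$: the two endpoints of the excursion lie on $\hat\h$ outside the finite set $F\cap(\text{nbhd of }\hat\h)$, and (for $F$ chosen large enough) they can be connected within $p^{-1}(v) - F$, which is connected away from a bounded set because $G_v$ is one-ended. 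Splicing in these detours converts $\gamma$ into a path from $x_1$ to $x_2$ inside $H - F$, contradicting that they lie in different components. Hence $H - F$ has at most one unbounded component, and we already saw it is infinite, so it has exactly one; therefore $H$ is one-ended, and by Lemma \ref{lem:independent} so is the halfspace of $G\acts T$ corresponding to $\h$.

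The main obstacle is the cocompactness bookkeeping in the second-to-last step: making precise, in a uniform way independent of the finite set $F$, that excursions of $\gamma$ outside $H$ across $\hat e$ can always be rerouted through $p^{-1}(v)$ while avoiding $F$. This requires choosing $F' \supseteq F$ using properness and $G_e$-cocompactness of neighborhoods of $\hat\h$ (so that ``near $\hat\h$ and outside $F'$'' behaves uniformly under the $G_e$-action), and then invoking one-endedness of $p^{-1}(v)$ to get the detours; the argument is morally the same kind of neighborhood-and-cocompactness estimate carried out in the proof of Lemma \ref{lem:independent}, so it should go through, but it is the part that needs care.
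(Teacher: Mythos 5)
Your proposal takes a genuinely different route from the paper. You aim for a \emph{constructive rerouting} argument: given $x_1,x_2$ in distinct unbounded components of $H-F$, you join them by a path $\gamma$ in $X-F$ (one-endedness of $X$) and then splice out the excursions of $\gamma$ across the wall $\hh=p^{-1}(\hat e)$ by rerouting through $p^{-1}(v)$ (one-endedness of $G_v$). The paper instead runs a \emph{separation-based contradiction}: if $K\subseteq p^{-1}(\h)$ were finite and $p^{-1}(\h)-K$ had a second unbounded component $C_2$ disjoint from the unbounded component $C_1$ of $p^{-1}(v)-K$, one sets $K' := K\cup\bigl(p^{-1}(v)-C_1\bigr)$ (finite), observes that $\hh$ lies in a bounded neighborhood of $C_1$ inside $p^{-1}(\h)$, deduces that only finitely many vertices of $C_2$ lie in $p^{-1}(\hat e\cup v)$, and concludes that $K'$ separates the (infinite) set $C_2-p^{-1}(\hat e\cup v)$ from $C_1$ in $X$, contradicting one-endedness of $X$. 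The paper's version is noticeably more economical: no paths are ever constructed or modified, and the only uniform estimate needed is the single bounded-neighborhood fact relating $\hh$ and $p^{-1}(v)$; your version instead requires a per-$F$ enlargement plus careful control of each excursion.

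The obstacle you flag at the end is a genuine gap, and its resolution is exactly the paper's $K'$-trick. When you splice in detours from excursion endpoints $a,b\in\hh-F$ into $p^{-1}(v)$, it is not enough that $a,b$ avoid $F$: the two landing points in $p^{-1}(v)$ must be joinable inside $p^{-1}(v)-F$, i.e., they must both land in the \emph{same} unbounded component of $p^{-1}(v)-F$. Nothing in your sketch prevents one landing point from falling into a bounded component of $p^{-1}(v)-F$ (or two excursions from landing in distinct bounded components), in which case the splice fails. To repair this, before choosing $\gamma$ you must pre-enlarge $F$ to $F'\supseteq F$ that swallows $p^{-1}(v)$ minus its unbounded complementary component, together with an $R$-neighborhood of that set in $\hh$ (where $R$ bounds the $H$-distance from $\hh$ to $p^{-1}(v)$, obtained from $G_e$-cocompactness); then take $\gamma$ in $X-F'$ so that every excursion endpoint is far enough from the bad set that the short connecting path lands in the unique unbounded component of $p^{-1}(v)-F$. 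That finite enlargement by ``$p^{-1}(v)$ minus its unbounded component'' is precisely the set $p^{-1}(v)-C_1$ appearing in the paper's $K'$, so the missing step is not mere bookkeeping but the key technical idea of the proof.
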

\begin{proof}
	As in Definition \ref{defn:treeofspaces}, let $p:X\to T$ be a tree of spaces for $G\acts T$.
	We must show that $p^{-1}(\h)$ is one-ended, so suppose for contradiction that $K\subseteq p^{-1}(\h)$ is a finite subgraph such that $p^{-1}(\h)-K$ has multiple unbounded components.
	We are assuming that $G$ and $G_v$ are one-ended, hence $X$ and $p^{-1}(v)$ are one-ended as well. In particular $p^{-1}(v)-K$ only has one unbounded component, call it $C_1$.
	Now consider the finite subgraph 
	$$K':=K\cup(p^{-1}(v)-C_1)\subseteq X.$$
	Let $C_2$ be an unbounded component of $p^{-1}(\h)-K$ disjoint from $C_1$.
	Observe that $p^{-1}(\hat{e})$ is contained in a bounded neighborhood of $p^{-1}(v)$ with respect to the induced metric of $p^{-1}(\h)$, hence also in a bounded neighborhood of $C_1$.
	It follows that $C_2\cap p^{-1}(\hat{e})$ is bounded, so only finitely many vertices in $C_2$ lie in $p^{-1}(\hat{e}\cup v)$.
	Any vertex in $C_2-p^{-1}(\hat{e}\cup v)$ is separated from $C_1$ in $X$ by $K'$, so this contradicts one-endedness of $X$.
\end{proof}

\section{Pocsets}\label{sec:pocsets}

We now recall the construction of a CAT(0) cube complex from a pocset or wallspace. This construction was originally due to Sageev \cite{Sageev95}, although our formulation will be closer to the version in \cite{Manning20} -- see also \cite{Nica04,ChatterjiNiblo05,Roller16,WiseRiches}.

\begin{definition}\label{defn:pocset}
	A \emph{pocset} is a poset $(\cP,\leq)$ together with an involution $A\mapsto A^*$ for all $A\in\cP$ satisfying:
	\begin{enumerate}
		\item\label{item:AA*} $A$ and $A^*$ are incomparable.
		\item $A\leq B \Rightarrow B^*\leq A^*$.
	\end{enumerate}
	Elements $A,B\in\cP$ are \emph{nested} if $A\leq B$ or $B\leq A$.
	And $A,B\in\cP$ are \emph{transverse} if neither $A, B$ nor $A, B^*$ are nested. The \emph{width} of $\cP$ is the maximum number of pairwise transverse elements, if such a maximum exists, otherwise we say the width is $\infty$.
	We say that $\cP$ is \emph{discrete} if for any $A\leq B\in\cP$ there are only finitely many $C$ with $A\leq C\leq B$.
\end{definition}

\begin{definition}\label{defn:ultrafilter}
	An \emph{ultrafilter} on a pocset $\cP$ is a subset $\omega\subseteq \cP$ satisfying:
	\begin{enumerate}
		\item (Completeness) For every $A\in \cP$, exactly one of $\{A,A^*\}$ is in $\omega$.
		\item (Consistency) If $A\in\omega$ and $A\leq B$, then $B\in\omega$.
	\end{enumerate}
	An ultrafilter $\omega$ is \emph{DCC} (descending chain condition) if every infinite descending chain $A_1\geq A_2\geq A_3\geq\cdots$ terminates (i.e. there is $N$ with $A_i=A_N$ for all $i\geq N$).
\end{definition}

\begin{proposition}\cite[Definition 9.10, Lemma 9.11 and Theorem 9.16]{Manning20}\label{prop:cubing}
	Let $\cP$ be a pocset of finite width that admits at least one DCC ultrafilter. Then there is a CAT(0) cube complex $C=C(\cP)$, called the \textbf{cubing of $\cP$}, such that:
	\begin{enumerate}
		\item The vertices of $C$ are the DCC ultrafilters on $\cP$.
		\item Two vertices $\omega_1,\omega_2$ in $C$ are joined by an edge if and only if $\omega_1\triangle\omega_2=\{A,A^*\}$ for some $A\in \cP$.
		\item The dimension of $C$ is equal to the width of $\cP$.
	\end{enumerate} 
\end{proposition}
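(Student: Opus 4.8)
The plan is to carry out the classical Sageev construction of a CAT(0) cube complex from a pocset, following \cite{Sageev95, Nica04, Manning20, ChatterjiNiblo05, Roller16}. I would declare $C^{(1)}$ to be the graph whose vertices are the DCC ultrafilters of $\cP$ — non-empty by hypothesis — and with an edge between $\omega_1,\omega_2$ whenever $\omega_1\triangle\omega_2=\{A,A^*\}$ for some $A\in\cP$; then properties (1) and (2) hold by construction, and the real content is: (a) $C^{(1)}$ is connected; (b) $C^{(1)}$ is the $1$-skeleton of a CAT(0) cube complex $C$; (c) $\dim C=\mathrm{width}(\cP)$.

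For (a), the basic tool is the \emph{elementary move}: if $\omega$ is a DCC ultrafilter and $A\in\omega$ is minimal (no $B\in\omega$ with $B<A$), then $\omega^A:=(\omega\setminus\{A\})\cup\{A^*\}$ is again a DCC ultrafilter — completeness is automatic, consistency can fail only through an element of $\omega$ strictly below $A$, and DCC is inherited since a strictly descending chain of $\omega^A$ through $A^*$ would give one in $\omega$ after deleting $A^*$. Given distinct DCC ultrafilters $\omega,\omega'$, set $S=\omega\setminus\omega'$; a minimal element $A$ of $S$ (which exists, as $S\subseteq\omega$ is DCC) is automatically minimal in $\omega$, since $B\in\omega$ with $B<A$ would give $A^*\leq B^*$ with $A^*\in\omega'$, hence $B^*\in\omega'$, i.e.\ $B\in S$, contradicting minimality. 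So the elementary move at $A$ yields a DCC ultrafilter with $|\omega^A\triangle\omega'|=|\omega\triangle\omega'|-2$, and connectivity reduces to the finiteness lemma $|\omega\triangle\omega'|<\infty$. Here finite width enters: any two incomparable elements of $S$ are transverse in $\cP$ (completeness of $\omega$ rules out $A\leq B^*$, and consistency of $\omega'$ rules out $B^*\leq A$), so every antichain of $S$ has at most $\mathrm{width}(\cP)$ elements; and $S$ has no infinite chain, because an infinite chain in $S\subseteq\omega$ would, via the involution, produce an infinite strictly descending chain inside $\omega'$, contradicting DCC. Dilworth's theorem then covers $S$ by at most $\mathrm{width}(\cP)$ chains, each finite, so $S$ is finite and $C^{(1)}$ is connected.

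For (b) and (c), I would run the standard hyperplane argument: for each $A\in\cP$ the edges realizing the swap $\{A,A^*\}$ form the \emph{hyperplane} dual to $A$, and consistency of ultrafilters shows it disconnects $C^{(1)}$ into exactly the two halfspaces $\{\omega:A\in\omega\}$ and $\{\omega:A^*\in\omega\}$; moreover two such hyperplanes cross at most once, with a crossing occurring precisely when the dual elements are transverse. One then forms $C$ by gluing in a $k$-cube for every family of $k$ pairwise-crossing hyperplanes sharing a common corner; the result is simply connected (loops in $C^{(1)}$ are filled inductively using the halfspace structure) with flag vertex links (pairwise-crossing hyperplanes through a vertex already span a cube), hence CAT(0) by Gromov's link condition, and its top-dimensional cubes correspond to maximal families of pairwise-transverse elements of $\cP$, giving $\dim C=\mathrm{width}(\cP)$. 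Alternatively, one can check directly that $C^{(1)}$ is a median graph — the median of $\omega_1,\omega_2,\omega_3$ being the majority ultrafilter $\{A:A\in\omega_i\text{ for at least two }i\}$, which is DCC by pigeonhole — and invoke Chepoi's theorem.

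I expect the main obstacle to be the finiteness lemma in (a), together with pinning down the dimension count in (c): the remainder of the construction is bookkeeping that appears essentially verbatim in the cited references, but these two points are exactly where the hypotheses of finite width and of admitting a DCC ultrafilter are used in an essential way.
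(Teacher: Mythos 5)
The paper states this proposition without proof, recalling it from the literature with citations to \cite{Sageev95,Manning20,Nica04,ChatterjiNiblo05,Roller16,WiseRiches}, so there is no in-paper argument to compare against. Your sketch is a correct rendition of the standard Sageev construction found in those references: the elementary-move connectivity argument, the finiteness of $\omega\triangle\omega'$ via bounded antichains plus the two DCC hypotheses, and the hyperplane (or median-graph) route to the CAT(0) cube structure and dimension count are all sound (the only small slip is that your ``no infinite chain in $S$'' clause explicitly treats only ascending chains via the involution, but the descending case follows directly from DCC of $\omega$, so the argument is complete).
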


\begin{proposition}\cite[Theorem 2.1]{Dunwoody79}\cite[Theorem 9.1 and Proposition 9.3]{Roller16}\label{prop:cubingtree}
Let $\cP$ be a discrete pocset that contains no pair of transverse elements.
Then $\cP$ admits a DCC ultrafilter.
Hence, by Proposition \ref{prop:cubing}, $C(\cP)$ exists and is a tree.
\end{proposition}
\begin{proof}
	The result follows from the references given above, or it can easily be proved directly as follows.
	Choose $A\in\cP$. Let $\omega=\{B\in\cP\mid A\leq B\}\cup\{B\in\cP\mid A^*\leq B\}$. Since $\cP$ contains no pair of transverse elements, $\omega$ is an ultrafilter on $\cP$. Furthermore, $\omega$ is DCC by the discreteness of $\cP$.
\end{proof}

In this paper we will only deal with pocsets that satisfy Proposition \ref{prop:cubingtree}.
One source of pocsets will come from the following definition.

\begin{definition}\label{defn:wallspace}
	A \emph{wallspace} $(X,\cP)$ is a set $X$ together with a family $\cP$ of non-empty subsets that is closed under complementation, such that for any $x,y\in X$ the set $\{A\in \cP\mid x\in A,\,y\notin A\}$ is finite. $\cP$ forms a pocset under inclusion, with the involution given by complementation. Moreover, for any $x\in X$ the set
	$$\lambda(x):=\{A\in\cP\mid x\in A\}$$
	is a DCC ultrafilter. Therefore, if $\cP$ has finite width, we can form the cubing $C=C(\cP)$, and we have a map $\lambda:X\to C$.
\end{definition}

Another pocset that will frequently be used in this paper is the pocset of halfspaces of a tree, or a tree of spaces. These pocsets are not wallspaces according to Definition \ref{defn:wallspace}, although they are very similar.

\begin{definition}
Let $T$ be a tree. Recall the definition of halfspaces in $T$ (Definition \ref{defn:treehalfspaces}), and let 
$\cH(T)$ denote the set of halfspaces. Let $\h\mapsto \h^*$ be the involution of $\cH(T)$ that exchanges the complementary components of each edge midpoint $\hat{e}$. This makes $\cH(T)$ into a pocset, with partial order given by inclusion, and the cubing of this pocset recovers the tree $T$.
\end{definition}

\begin{definition}
Let $p:X\to T$ be a tree of spaces associated to a group splitting $G\acts T$ (Definition \ref{defn:treeofspaces}).
Let $\cH(X)$ denote the set of halfspaces of $X$ (recall this is just the set of $p$-preimages of halfspaces of $T$), and define an involution $\h\mapsto\h^*$ on $\cH(X)$ so that $p(\h^*)=p(\h)^*$. The partial order of inclusion makes $\cH(X)$ into a pocset, and $p$ induces a pocset isomorphism between $\cH(X)$ and $\cH(T)$.
For $\h\in\cH(X)$, let $\hh:=\h\cap\h^*$, and call this a \emph{wall}. Note that walls are precisely the $p$-preimages of edge midpoints in $T$. Note that halfspaces and walls in $X$ are connected subgraphs of $X$, and the walls are pairwise disjoint.
\end{definition}

Finally, we give the following lemma, to be used in Section \ref{sec:chopping}.

\begin{lemma}\label{lem:hdeep}
	Let $p:X\to T$ be a tree of spaces associated to a group splitting $G\acts T$.
	For each $\h\in\cH(X)$, we have that $\h$ is not contained in a bounded neighborhood of $\hh$ or $\h^*$.
	Moreover, if $G$ (or equivalently $X$) is one-ended, then $\hh$ is unbounded.
\end{lemma}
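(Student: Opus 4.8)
The plan is to exploit the group action: both $\hh$ and $\h^*$ (as subsets of $X$) are invariant under the edge stabilizer $G_e$ where $\hat e = p(\hh)$, while $\h$ itself is $G_e$-invariant. So if $\h$ were contained in an $R$-neighborhood of $\hh$ for some $R < \infty$, then, since $G$ acts geometrically (hence cocompactly) on $X$ and $G_e$ acts cocompactly on any $G_e$-invariant neighborhood of $\hh$, we would have that $G_e$ acts cocompactly on $\h$. I would then derive a contradiction with cocompactness of the $G$-action on $X$: pick any vertex $x$ in a halfspace $\h'$ with $\h' \subsetneq \h$ and $\hh' \neq \hh$ (such a halfspace exists because the splitting $G\acts T$ is non-trivial, so $T$ has vertices of valence $\geq 2$ on both sides of $\hat e$, giving infinitely many edges strictly inside $\h$); then $p(G_e x)$ lies in the bounded set $p(\h) \cap N_R(\hat e) \subseteq T$, so only finitely many $G$-translates of $x$ can lie in the $G_e$-orbit-closure near $\hh$, whereas $\h$ — being $G_e$-cocompact under our assumption — would force $G_e x$ to be coarsely all of $\h$, contradicting that $\h$ contains the unbounded-from-$\hh$ region $\h'$. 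The same argument applies verbatim to $\h^*$ in place of $\h$, and to a bounded neighborhood of $\h^*$ in place of $\hh$, since $p(\h^*) \cap p(N_R(\h^*))$ is again a bounded subtree for the relevant stabilizer.

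Concretely, I would phrase the first assertion as follows. Let $\hat e = p(\hh)$ and let $v\in VT$ be the vertex on the $\h$-side of $\hat e$. Since $G\acts T$ is non-trivial and minimal, $T$ is not a single edge, so there is an edge $e'$ of $T$ with $\hat{e'} \in \h$ and $\hat{e'} \neq \hat e$; iterating, $T\cap \h$ contains edge midpoints at unbounded distance from $\hat e$ in $T$ (because no vertex of a minimal non-trivial action is a "dead end" — every vertex lies on the axis of some hyperbolic element or on a path between two such axes). Now suppose $\h \subseteq N_R(\hh)$ in the path metric of $X$. Because $p$ is $G$-equivariant and coarsely Lipschitz (being simplicial on the barycentric subdivision with $X$ locally finite and $G$-cocompact), $p(\h) \subseteq N_{R'}(\hat e)$ in $T$ for some $R'$ depending on $R$. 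But $p(\h) \supseteq \{\hat{e'} : e' \text{ on the } \h\text{-side}\}$ is unbounded in $T$, a contradiction. The argument for $\h^*$, and for bounded neighborhoods of $\h^*$ rather than $\hh$, is identical after swapping roles.

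For the "moreover" statement, suppose $G$ (equivalently $X$) is one-ended but $\hh$ is bounded. Then $\hh$ is a finite subgraph of $X$, and since halfspaces are connected and $\h \cup \h^* = X$ with $\h\cap\h^* = \hh$, removing $\hh$ from $X$ disconnects $X$ into $\h - \hh$ and $\h^* - \hh$. By the first part of the lemma, neither $\h$ nor $\h^*$ is contained in a bounded neighborhood of $\hh$; in particular each of $\h - \hh$ and $\h^* - \hh$ is unbounded (if say $\h - \hh$ were bounded then $\h \subseteq N_{\diam(\h)}(\hh)$, contradiction). So $X$ minus the finite set $\hh$ has at least two unbounded components, contradicting one-endedness of $X$.

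The main obstacle I anticipate is the cocompactness bookkeeping in the first paragraph — making precise that a $G_e$-invariant bounded-radius neighborhood of $\hh$ is $G_e$-cocompact, and that this together with $G$-cocompactness of $X$ would force $\h$ to be $G_e$-cocompact, from which the contradiction is cleanest. In fact the cleaner route, which avoids this, is the one executed in the second paragraph above: push everything down to the tree $T$ via $p$ and use that $p(\h)$ is an honest unbounded subtree while a bounded neighborhood of $\hh$ has bounded $p$-image. This reduces the whole lemma to the elementary observation that in a non-trivial minimal splitting every halfspace of $T$ contains edge midpoints arbitrarily far from its boundary, which is where I would focus the written proof.
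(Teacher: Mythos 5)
Your executed argument (second and third paragraphs) is the paper's proof: push down to $T$ via the distance non-increasing map $p$ and use minimality of $G\acts T$ to see that $p(\h)$ cannot lie in a bounded neighborhood of $\hat e$, and for the ``moreover'' clause observe that a bounded wall $\hh$ would be a finite separator of $X$ into the two unbounded pieces $\h-\hh$ and $\h^*-\hh$, contradicting one-endedness. The cocompactness argument sketched in your first paragraph is unnecessary, as you yourself note, and the paper does not take that route; also, $p$ is $1$-Lipschitz simply because it is simplicial to the barycentric subdivision of $T$, not for the local-finiteness/cocompactness reasons you cite parenthetically.
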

\begin{proof}
	The action $G\acts T$ is minimal, so $p(\h)$ is not contained in a bounded neighborhood of $p(\hh)$ or $p(\h^*)$.
	Since $p$ is distance non-increasing, we have that $\h$ is not contained in a bounded neighborhood of $\hh$ or $\h^*$.
	Secondly, as $\hh$ separates $\h-\hh$ from $\h^*-\hh$ in $X$, and since $X$ is one-ended, we must have that $\hh$ is unbounded.
\end{proof}

\section{Chopping up halfspaces}\label{sec:chopping}

In this section we prove Theorems \ref{thm:oneendedhs} and \ref{thm:JSJ}.
The main technical result we need is the following.

\begin{theorem}\label{thm:smallerstabs}
	Let $G\acts T$ be a splitting with minimal action and finitely generated edge stabilizers, and with $G$ one-ended and finitely generated.
	If the splitting has some halfspace with more than one end, then there is another splitting $G\acts T'$ with minimal action and finitely generated edge stabilizers, such that every edge stabilizer (resp. vertex stabilizer) of $T'$ is contained in an edge stabilizer (resp. vertex stabilizer) of $T$, and for each $e\in ET$ there is a tree $T_e$ and there is an action of $G$ on $\sqcup_{e\in ET} T_e$ that is compatible with the action on $T$ such that:
	\begin{enumerate}[label={(\alph*)}]
		\item\label{item:Te} For each $e\in ET$, either $T_e$ is a single vertex or the action $G_e\acts T_e$ is a non-trivial finite splitting with finite edge stabilizers.
		\item\label{item:nofix} There is an edge $e_0\in ET$ such that $T_{e_0}$ is not a single vertex.
		\item\label{item:T'toTe} There is an injective $G$-equivariant map $ET'\to\sqcup_e VT_e$.
		\item\label{item:nofixT'} If no vertex stabilizer of $T$ fixes an edge in $T$, then there is a vertex $v_0\in VT$ such that $G_{v_0}$ has no fixed point in $T'$.
	\end{enumerate}
\end{theorem}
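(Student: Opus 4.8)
The proof will use a tree of spaces $p\colon X\to T$ (Definition \ref{defn:treeofspaces}); recall $\cH(X)\cong\cH(T)$ is a pocset with no transverse pairs whose cubing is $T$. Fix $\h_0\in\cH(X)$ with more than one end, bounded by the wall $\hh_0=\h_0\cap\h_0^*=p^{-1}(\hat e_0)$. Following the sketch in the introduction, the plan is to chop up certain halfspaces of $X$ to create new walls, form a pocset $\cQ$ consisting of $\cH(X)$ together with the resulting new halfspaces and all their $G$-translates, and set $T':=C(\cQ)$; one then checks that $\cQ$ has no transverse pairs (so $T'$ is a tree, by Proposition \ref{prop:cubing}), that $G\acts T'$ is non-trivial (there is no $G$-invariant ultrafilter on $\cH(T)$, hence none on $\cQ$, so the minimal invariant subtree is a genuine tree), and that \ref{item:Te}--\ref{item:nofixT'} hold.

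\textbf{Step 1: some edge stabilizer has more than one end.} I would argue by contraposition; this is where the hypothesis on $\h_0$ is used. Since $G$ is one-ended, no edge stabilizer of $T$ is finite -- otherwise collapsing all other edge orbits in $T$, and using minimality of $G\acts T$, exhibits a non-trivial splitting of $G$ over a finite subgroup. Suppose, for contradiction, that every edge stabilizer of $T$ had exactly one end. Then every vertex stabilizer $G_v$ would be one-ended: it is infinite (it contains an incident edge stabilizer); it is not two-ended (a two-ended group has no one-ended subgroup, yet $G_v\supseteq G_e$ with $G_e$ one-ended); and it is not infinite-ended, for otherwise $G_v$ splits non-trivially over a finite subgroup, and since every incident $G_e$ is one-ended hence elliptic in this splitting, refining $T$ at $v$ and collapsing would exhibit a non-trivial splitting of $G$ over a finite subgroup. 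But if every vertex stabilizer is one-ended then Lemma \ref{lem:oneendedvertexgroup} forces every halfspace of $G\acts T$ to be one-ended, contradicting the choice of $\h_0$. Hence some edge stabilizer $G_{e_0}$ has more than one end. As two-ended groups and infinite-ended finitely generated groups both split non-trivially over a finite subgroup (Stallings), fix such a splitting $G_{e_0}\acts T_{e_0}$ with finite edge stabilizers. For $e$ in the orbit $G\cdot e_0$ transport $T_{e_0}$ equivariantly (formally, use the induced $G$-set $G\times_{G_{e_0}}T_{e_0}$ on that orbit), and for all other $e$ let $T_e$ be a single vertex; this gives an action of $G$ on $\sqcup_{e\in ET}T_e$ compatible with $G\acts T$, proving \ref{item:Te} and \ref{item:nofix}.

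\textbf{Step 2: realizing the $T_e$ as new walls and building $T'$.} For $e$ with $G_e$ multi-ended, $G_e$ acts geometrically on the wall $\hh_e=p^{-1}(\hat e)$, so -- possibly after enlarging $X$ -- the splitting $G_e\acts T_e$ can be realized, in the style of Dunwoody, by a $G_e$-invariant nested family of finite-stabilizer separating subgraphs of $\hh_e$; pushing this family into the halfspace $\h$ bounded by $\hh_e$ on a $G$-equivariantly chosen side produces new halfspaces $\fa\subsetneq\h$ of $X$, and we take $\cQ$ as above. The key checks are: (i) the new halfspaces carved from a fixed $\h$ are pairwise nested, which is encoded by $T_e$; (ii) every new halfspace $\fa\subsetneq\h$ is nested with every $\h'\in\cH(X)$ -- since $\fa\subseteq\h$ this follows from the nesting of $\h$ and $\h'$ unless $\h'\subsetneq\h$, a case excluded by choosing the separating family close enough to $\hh_e$ that the new halfspaces avoid every other wall of $X$ meeting $\h$; and (iii) $\cQ$ still admits a DCC ultrafilter. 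Granting these, $\cQ$ has width $1$ and a DCC ultrafilter, so $T'=C(\cQ)$ is a tree, and (restricting to the minimal subtree) $G\acts T'$ is minimal and non-trivial. Its edge (resp.\ vertex) stabilizers are stabilizers (resp.\ intersections of stabilizers) of elements of $\cQ$, hence contained in edge (resp.\ vertex) stabilizers of $T$, with the genuinely new edge stabilizers finite; in particular they are finitely generated. Finally, the edges of $T'$ arising from a given $\h$ correspond to the complementary regions of the separating family in $\hh_e$, i.e.\ to the vertices of $T_e$, giving the injective $G$-equivariant map $ET'\to\sqcup_e VT_e$ of \ref{item:T'toTe}.

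\textbf{Step 3: the vertex-group condition \ref{item:nofixT'}, and the main obstacle.} Assume no vertex stabilizer of $T$ fixes an edge of $T$, let $v_0$ be an endpoint of $e_0$, and suppose for contradiction that $G_{v_0}$ fixes a vertex of $T'$, i.e.\ there is a $G_{v_0}$-invariant DCC ultrafilter $\omega$ on $\cQ$. Its restriction to $\cH(T)$ is a $G_{v_0}$-invariant ultrafilter on $\cH(T)$, hence a fixed point of $G_{v_0}$ in $T$; by hypothesis this fixed point is $v_0$ itself (two fixed vertices would force a fixed edge), so $\omega\cap\cH(T)$ is precisely the ultrafilter of halfspaces of $T$ containing $v_0$. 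Since $G_{v_0}\ne G_{e_0}$ (again by hypothesis) the group $G_{v_0}$ permutes the edges at $v_0$ lying in $G\cdot e_0$; combining this with consistency of $\omega$, one extracts from $\omega$ a $G_{e_0}$-invariant DCC ultrafilter on the sub-pocset of new halfspaces realizing $T_{e_0}$, equivalently a fixed point of $G_{e_0}$ in $T_{e_0}$, contradicting non-triviality of $G_{e_0}\acts T_{e_0}$. Hence $G_{v_0}$ has no fixed point in $T'$. The step I expect to be the main obstacle is Step 2 -- realizing the Dunwoody-style family inside the edge space $\hh_e$ close enough to $\hh_e$ to be nested with every pre-existing wall of $X$ (check (ii)) while retaining finite stabilizers and a finite quotient -- which will require a careful choice of the tree of spaces $X$ and a cocompactness argument for the action of $G_e$ near $\hh_e$.
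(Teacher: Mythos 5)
Your high-level skeleton (tree of spaces, build a $G$-pocset with no transverse pairs, cube it, read off stabilizers) matches the paper, and you correctly flag check (ii) in Step~2 as the crux. But your proposed route through that crux is not the paper's, and I believe it does not work. You cut the \emph{wall} $\hh_e$ with a Dunwoody-style track family realizing a Stallings splitting of $G_e$, then push the tracks into $\h$ ``close enough to $\hh_e$ that the new halfspaces avoid every other wall of $X$ meeting $\h$.'' There is no such collar in general: the walls $\hh'$ with $\h'\subsetneq\h$ live in the same (noncompact, $G_v$-cocompact) vertex space as $\hh_e$ and can have points at bounded -- indeed distance-$1$ -- proximity to $\hh_e$, so a track separating $\hh_e$ from \emph{all} of them need not exist. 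The paper avoids this entirely by cutting the \emph{halfspace} $\h_0$ rather than the wall: it takes a $W$-minimal cut $(\h_0,C)$, where $W$ counts boundary edges lying in walls, and (after a graph modification, Lemma~\ref{lem:modX}) shows via Lemmas~\ref{lem:oneunbounded}--\ref{lem:noh'edges} that each sub-halfspace $\h'\subsetneq\h_0$ lies entirely on one side of $\delta C$; Kr\"on's theorem is then invoked to make $\cP(\h_0,C)$ nested. The splitting $G_0\acts T_0$ over finite subgroups is \emph{derived} from this cut (Lemmas~\ref{lem:T0}, \ref{lem:T0nofixed}), not taken a priori from Stallings as in your Step~1. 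Even after a good cut the pieces $[x]$ and the translates $g\h_0$ are not literally nested as subsets of $X$, which is why $\cP$ consists of \emph{pairs} $(\fa,\h)$ ordered via the coarse relation $\preceq$ (bounded-neighborhood containment), underpinned by the coarse-nesting Lemma~\ref{lem:[y]*preceq}. Your proposal works with honest subsets and set inclusion throughout, which is the part that cannot be made to close.

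Two smaller points. Your Step~1 contrapositive (some edge stabilizer is multi-ended) is a reasonable observation, but it does not identify the multi-ended $G_{e_0}$ with the edge bounding the chosen bad halfspace $\h_0$; the paper gets this identification for free from the cut. And your Step~3 as written would give a contradiction as soon as $G_{e_0}$ fixes \emph{any} vertex of $T'$ (restrict the ultrafilter to the sub-pocset and claim a fixed point in $T_{e_0}$), making part~\ref{item:nofixT'} unconditional; the reason this is too strong is that the sub-pocset of elements $([x],\h_0),([x]^*,\h_0^*)$ does \emph{not} cube to $T_0$ -- it is a star -- and the correspondence with $T_0$ is only the injective edge-to-vertex map $\lambda$ of Lemma~\ref{lem:ET'toVTe}. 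The paper's Step~3 instead uses the hypothesis to produce an element $g\in G_{v_0}\setminus G_{e_0}$ and, with Lemmas~\ref{lem:[y]*preceq} and \ref{lem:[x]leq[y]} and infiniteness of $G_0$-orbits in $\cM_0$, exhibits an explicit hyperbolic $gg_0\in G_{v_0}$ on $T'$.
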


\begin{remark}
	One can transform the splitting $G\acts T'$ back to $G\acts T$ by folding (in the sense of \cite{BestvinaFeighn91}). More precisely, there is a vertex $u_0'\in VT'$ and a family of edges $\mathcal{E}'_0$ incident at $u_0'$ such that folding together the edges in each family $g\mathcal{E}'_0$ ($g\in G$) defines a $G$-equivariant map $T'\to T$. Moreover, the image of $\mathcal{E}'_0$ is the edge $e_0\in ET$ from \ref{item:nofix}, and the vertex $v_0\in VT$ from \ref{item:nofixT'} is the endpoint of $e_0$ which is not the image of $u'_0$. 
	These facts can be deduced from the construction in Subsection \ref{subsec:T'}.
\end{remark}

Before proving Theorem \ref{thm:smallerstabs}, let's see how to deduce Theorems \ref{thm:oneendedhs} and \ref{thm:JSJ}.
This requires the following theorem concerning accessible groups.

\begin{theorem}\label{thm:terminate}\cite[Theorem 5.12]{Dicks80}\\
	Let $G$ be a finitely generated accessible group and let $(G_i)$ be a sequence of groups such that $G_0=G$, and $G_{i+1}$ is a vertex group in some non-trivial finite splitting of $G_i$ over finite subgroups. Then the sequence $(G_i)$ terminates.
\end{theorem}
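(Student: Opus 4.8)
This is a standard fact about accessible groups, so what follows is the approach I would take to recover it. The plan is to use the \emph{bounded complexity} characterisation of accessibility: a finitely generated group $G$ is accessible exactly when there is a bound $n=n(G)$ so that every reduced cocompact action of $G$ on a tree with finite edge stabilizers has at most $n$ orbits of edges (equivalently, every reduced graph-of-groups decomposition of $G$ with finite edge groups has at most $n$ edges --- and, since the underlying graph is connected, also at most $n+1$ vertices). Assuming for contradiction that the sequence $(G_i)$ never terminates, I would manufacture from it cocompact $G$-trees with finite edge stabilizers whose edge-orbit count is unbounded, contradicting this bound.

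The engine is the refinement (blow-up) of a splitting along a vertex. Suppose $G\acts T$ cocompactly with finite edge stabilizers, $v\in VT$ has stabilizer $G_v$, and $G_v$ acts cocompactly on a tree $S$ with finite edge stabilizers. Each edge stabilizer of $T$ incident to $v$ is a finite subgroup of $G_v$, hence fixes a point of $S$; so one may equivariantly excise the orbit $Gv$ and glue in copies of $S$, reattaching each old edge at a fixed point of its (finite) stabilizer. The resulting cocompact $G$-tree $\widehat T$ again has finite edge stabilizers, its vertex stabilizers coming from $S$ replace $G_v$, and its number of orbits of edges equals that of $T$ plus that of $S$; in particular, refining along a \emph{non-trivial} splitting strictly increases the edge-orbit count. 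Starting from the Bass--Serre tree of the splitting of $G_0=G$ with vertex group $G_1$, and iterating --- at stage $i$ blowing up the vertex carrying $G_i$ by the Bass--Serre tree of the given non-trivial splitting of $G_i$ with vertex group $G_{i+1}$ --- yields, after $i$ steps, a cocompact $G$-tree with finite edge stabilizers and at least $i+1$ orbits of edges.

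The remaining point, which is the real obstacle, is to square this with the bound $n(G)$: the trees produced above need not be reduced, and collapsing edges to reduce a decomposition can both lower the edge count and absorb the distinguished vertex group $G_{i+1}$ into a neighbour, so one cannot simply conclude $i+1\le n(G)$. The fix I would pursue is to work throughout with a complexity invariant of graphs of groups that is non-decreasing under refinement \emph{and} under the collapses used to pass to reduced form --- for instance the Bestvina--Feighn complexity, or the resolution/track complexity underlying Dunwoody's accessibility argument --- so that the growing refinements force the complexity of the associated reduced decompositions to be unbounded, contradicting accessibility. A more hands-on alternative would be to keep the intermediate refinements as close to reduced as possible and show that each reduction step costs only a bounded number of edges (using that each newly introduced edge stabilizer is a proper subgroup of its adjacent vertex stabilizers, so the only collapsible edges come from the reattachments), so that the edge count still diverges; but this bookkeeping is delicate, and in any case the crux of the proof is exactly this interaction between refinement and reducedness. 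Once that is handled, the bound $n(G)$ is contradicted and the sequence $(G_i)$ must terminate.
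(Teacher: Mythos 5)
The paper does not prove this statement; it is cited from Dicks' book (Theorem 5.12), so there is no in-paper argument to compare against. On its own terms, your proposal correctly identifies both the standard strategy (iterated blow-up along the chain $(G_i)$ plus a complexity bound from accessibility) and the genuine obstacle: the refined $G$-trees need not be reduced, and reducing them can erase the gain in edge orbits. This is a real gap and not just delicate bookkeeping. Concretely, take $G=A*_CB$ and refine at $A$ via a non-trivial splitting $A=A_1*_DA_2$ in which the vertex group $A_1$ equals the finite edge group $C$; after reattaching the $C$-edge at the vertex carrying $A_1$, that edge has equal edge and vertex group and is collapsible, and collapsing it (using $C\le B$) yields $G=B*_DA_2$, a reduced decomposition with exactly as many edge orbits as $A*_CB$. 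Thus the reduced edge count really can stall, and any correct proof must work with an invariant that increases under refinement \emph{and} does not decrease under the collapses needed to reduce --- which, as you note, is the actual content of Dicks' theorem. Since your write-up names this crux but does not carry it out, it is not a complete proof as submitted.
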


\theoremstyle{plain}
\newtheorem*{thm:oneendedhs}{Theorem \ref{thm:oneendedhs}}
\begin{thm:oneendedhs}
	Let $G\acts T$ be a non-trivial splitting with $G$ one-ended and finitely generated. Suppose the edge stabilizers are finitely generated and accessible.
	Then there is a non-trivial splitting $G\acts T'$ with minimal action such that:
	\begin{enumerate}
		\item The halfspaces of $G\acts T'$ are one-ended.
		\item Edge stabilizers (resp. vertex stabilizers) of $T'$ are finitely generated and are subgroups of the edge stabilizers (resp. vertex stabilizers) of $T$.
		\item For each edge $e'$ in $T'$ there exists an edge $e$ in $T$ such that $G_{e'}$ is a vertex stabilizer in some finite splitting of $G_e$ over finite subgroups.
	\end{enumerate}	
\end{thm:oneendedhs}

\begin{proof}
	Passing to a minimal subtree if necessary, we may assume that the action $G\acts T$ is minimal.	
	If the halfspaces of the splitting are already one-ended then we are done. Otherwise, apply Theorem \ref{thm:smallerstabs} repeatedly until we get a splitting where all the halfspaces are one-ended.
	Each time we apply Theorem \ref{thm:smallerstabs}, the new edge stabilizers (resp. vertex stabilizers) will be finitely generated (using Lemma \ref{lem:fgvertex}), and they will be subgroups of the old edge stabilizers (resp. vertex stabilizers).
	Moreover, by \ref{item:Te} each old edge stabilizer admits a finite splitting over finite subgroups which is either an action on a single vertex or a non-trivial splitting, and at least one of these splittings is non-trivial by \ref{item:nofix}. Furthermore, the new edge stabilizers are vertex stabilizers in the splittings of the old edge stabilizers by \ref{item:T'toTe}.
	Since the original edge stabilizers are accessible, Theorem \ref{thm:terminate} ensures that the process of repeatedly applying Theorem \ref{thm:smallerstabs} cannot go on forever, so we must eventually obtain a splitting of $G$ where all the halfspaces are one-ended.
	The second and third parts of Theorem \ref{thm:oneendedhs} also follow readily from the construction.
\end{proof}

\begin{definition}\label{defn:JSJ} \cite{GuirardelLevitt17}
Let $G$ be a group and let $\cA$ be a family of subgroups of $G$ that is closed under conjugating and taking subgroups.
If $G\acts T$ is a splitting with minimal action and edge stabilizers in $\cA$, then we refer to $T$ as an \emph{$\cA$-tree of $G$}.
An $\cA$-tree $T'$ \emph{dominates} an $\cA$-tree $T$ if every vertex stabilizer of $T'$ fixes a point in $T$.
A \emph{JSJ tree of $G$ over $\cA$} is an $\cA$-tree $T$ such that:
\begin{itemize}
	\item $T$ is \emph{universally elliptic}, meaning its edge stabilizers fix points in all other $\cA$-trees, and
	\item $T$ dominates all other universally elliptic $\cA$-trees.
\end{itemize}
\end{definition}

\theoremstyle{plain}
\newtheorem*{thm:JSJ}{Theorem \ref{thm:JSJ}}
\begin{thm:JSJ}
	Let $G$ be a finitely generated one-ended group and let $\cA$ be a family of subgroups of $G$ that is closed under conjugating and taking subgroups.
\begin{enumerate}
	\item\label{item:existsJSJ} If the groups in $\cA$ are finitely generated and accessible and there exists a JSJ tree $T$ of $G$ over $\cA$, then there exists a JSJ tree $T'$ of $G$ over $\cA$ such that the halfspaces of $G\acts T'$ are one-ended.
	\item\label{item:allJSJ} If $T$ is a JSJ tree of $G$ over $\cA$ with finitely generated edge stabilizers, such that no vertex stabilizer of $T$ fixes an edge in $T$, then the halfspaces of $G\acts T$ are one-ended.
\end{enumerate}
\end{thm:JSJ}

\begin{proof}
	We start with a general observation about JSJ trees.
	Suppose $T$ is a JSJ tree of $G$ over $\cA$, suppose $T'$ is another $\cA$-tree of $G$ that dominates $T$, and suppose that the edge stabilizers of $T'$ are subgroups of the edge stabilizers of $T$. It then follows from Definition \ref{defn:JSJ} that
	\begin{enumerate}[label=(\roman*)]
		\item\label{item:T'JSJ} $T'$ is also a JSJ tree for $G$ over $\cA$, and 
		\item\label{item:TdominatesT'} $T$ dominates $T'$.
	\end{enumerate} 
	
	Let's now show \ref{item:existsJSJ}, so suppose the groups in $\cA$ are finitely generated and accessible and let $T$ be a JSJ tree of $G$ over $\cA$.
	By \ref{item:T'JSJ}, we may apply Theorem \ref{thm:oneendedhs} to produce a JSJ tree $T'$ of $G$ over $\cA$ such that the halfspaces of $G\acts T'$ are one-ended.
	
	Let's now prove \ref{item:allJSJ}, so suppose $T$ is a JSJ tree of $G$ over $\cA$ with finitely generated edge stabilizers, such that no vertex stabilizer of $T$ fixes an edge in $T$, and suppose for contradiction that $G\acts T$ does not have one-ended halfspaces.
	By \ref{item:T'JSJ}, we may apply Theorem \ref{thm:smallerstabs} to produce a JSJ tree $T'$ of $G$ over $\cA$. But part \ref{item:nofixT'} in Theorem \ref{thm:smallerstabs} implies that $T$ does not dominate $T'$, contradicting \ref{item:TdominatesT'}.
\end{proof}

\subsection{Strategy for Theorem \ref{thm:smallerstabs}}\label{subsec:strategy}

The rough strategy to prove Theorem \ref{thm:smallerstabs} is as follows. We take a tree of spaces $p:X\to T$ associated to the splitting $G \acts T$ (Definition \ref{defn:treeofspaces}), and we take a halfspace $\h_0\in\cH(X)$ with more than one end. Say $\h_0$ corresponds to an edge $e_0\in ET$.
We let $\delta$ be some finite collection of edges which cuts $\h_0$ into two unbounded components, and we chop up the halfspace $\h_0$ using the $G_{e_0}$-translates of $\delta$. We define a tree $T_{e_0}$ which is somehow dual to this chopping, together with an action $G_{e_0}\acts T_{e_0}$.
And then we do something similar to obtain the splitting $G\acts T'$, but this time we chop up the entire space $X$ using all the $G$-translates of $\delta$ together with the existing walls $\hh$ in $X$.

The difficulty comes in the details of how we define these trees $T_{e_0}$ and $T'$.
If we were only interested in defining $T_{e_0}$, then we could use the work of Dunwoody \cite[Theorem 1.1]{Dunwoody82} to ensure that any two $G_{e_0}$-translates $g_1\delta,g_2\delta$ induce nested partitions of $\h$, i.e. one of the complementary components of $g_1\delta$ is contained in one of the complementary components of $g_2\delta$, and from there we could define a dual tree $T_{e_0}$ whose edges correspond exactly to the $G_{e_0}$-translates of $\delta$ (using Section \ref{sec:pocsets} or \cite[Theorem 4.1]{Dunwoody82}).
However, the situation becomes more complicated when we try to construct $T'$.
We want to chop up $X$ using the $G$-translates of $\delta$ together with the existing walls $\hh$, but it is not clear how to produce a nested system of partitions of $X$ from this chopping.
The solution is to choose the initial edge cut set $\delta$ in a very careful manner so that the $G_{e_0}$-translates of $\delta$ only chop up the halfspace $\h_0$ and do not chop up any deeper halfspace $\h\subsetneq\h_0$ (Lemma \ref{lem:noh'edges}).
We are then able to construct a system of partitions of $X$ which is compatible with our chopping and which is ``almost nested''.
From this we define a pocset $\cP$ whose elements are genuinely nested (Subsection \ref{subsec:pocsetP}), and we obtain the tree $T'$ as the cubing of $\cP$ (using Section \ref{sec:pocsets} again).
This construction relies on the edge cut set $\delta$ satisfying several properties, so it requires significant work to even construct a suitable $\delta$ (Subsections \ref{subsec:mincut} and \ref{subsec:chopping}).
In particular, in Subsection \ref{subsec:mincut} we use a theorem of Kr{\"o}n \cite{Kron10} (which is a refinement of Dunwoody's theorem \cite[Theorem 1.1]{Dunwoody82} mentioned above).

\subsection{Minimal cuts}\label{subsec:mincut}

In this subsection we recall work of Kr{\"o}n \cite{Kron10} about cutting up graphs with more than one end in an automorphism-invariant way. Kr{\"o}n's paper is in turn based on work of Dunwoody \cite{Dunwoody82}.

\begin{definition}\label{defn:cut}
	Let $Y$ be a connected simplicial graph. For $C\subseteq VY$, define the \emph{edge boundary} of $C$, denoted $\delta C$, to be the set of edges in $Y$ with one vertex in $C$ and one vertex in $VY-C$.
	A \emph{ray} in $Y$ is an embedded one-way infinite path in $Y$.
	A \emph{cut} is a subset $C\subseteq VY$ with $\delta C$ finite and such that the induced subgraphs of $C$ and $VY-C$ are both connected and contain rays.
	Note that, if $Y$ is locally finite, then a connected subgraph of $Y$ contains a ray if and only if it is unbounded.	
	A cut $C$ is \emph{minimal} if $|\delta C|$ is minimal.
\end{definition}

\begin{remark}
	Any locally finite simplicial graph with more than one end has a cut.
\end{remark}

\begin{proposition}\label{prop:cutpocset}
	Let $Y$ be a locally finite simplicial graph.
	Given a cut $C$, we can form a wallspace $(VY,\cP(Y,C))$ (as in Definition \ref{defn:wallspace}), where
	$$\cP(Y,C):=\{gC,VY-gC\mid g\in\Aut(Y)\}.$$
\end{proposition}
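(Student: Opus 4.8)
The plan is to verify directly the two requirements of a wallspace in Definition \ref{defn:wallspace}. The first requirement is essentially immediate: every element of $\cP(Y,C)$ is a nonempty subset of $VY$, since $C$ and $VY-C$ each contain a ray (hence are nonempty), and $\cP(Y,C)$ is closed under complementation because $VY-gC = g(VY-C)$ is again a translate of the cut. (One also notes that no $A\in\cP(Y,C)$ equals its complement, since $A=VY-A$ is impossible for a nonempty set; so $\cP(Y,C)$ really is a pocset under inclusion with involution $A\mapsto VY-A$.) So the content of the proposition is the finiteness condition: for all $x,y\in VY$, the set $\{A\in\cP(Y,C)\mid x\in A,\ y\notin A\}$ is finite.

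To prove finiteness I would fix $x,y\in VY$, choose a finite path $\gamma$ in $Y$ from $x$ to $y$ of length $\ell$, and establish two observations. \emph{First}, any $A\in\cP(Y,C)$ with $x\in A$ and $y\notin A$ is determined by its edge boundary $\delta A$: indeed, $A$ and $VY-A$ are both connected (they are translates of the connected sets $C$ and $VY-C$), and $\delta A$ is exactly the set of edges with one endpoint in each, so deleting $\delta A$ from $Y$ yields a graph with precisely the two components $A$ and $VY-A$; thus $A$ is the component of $Y-\delta A$ containing $x$. \emph{Second}, $\delta A$ lives in a fixed finite region of $Y$: writing $A=gZ$ with $Z\in\{C,VY-C\}$ and $g\in\Aut(Y)$, we have $\delta A = g\,\delta C$; since $x\in A$ and $y\notin A$ the path $\gamma$ must traverse some edge $e\in\delta A$, and $e$ lies within distance $\ell$ of $x$; letting $R_0$ be the diameter in $Y$ of the (finite) set of vertices incident to $\delta C$, every edge of $\delta A=g\,\delta C$ then lies within distance $\ell+R_0+1$ of $x$, because automorphisms preserve distances. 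By local finiteness the ball of radius $\ell+R_0+1$ about $x$ is finite, so there are only finitely many possible edge sets $\delta A$, and by the first observation only finitely many possible $A$. This is exactly the finiteness condition, so $(VY,\cP(Y,C))$ is a wallspace.

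The main obstacle is precisely this finiteness condition, and the key point is that one must exploit the specific shape of $\cP(Y,C)$ as the $\Aut(Y)$-orbit of a single fixed cut: a general family of subsets with uniformly bounded edge boundary need not be ``locally finite'' between two points, whereas translates of one finite edge set under an isometric action are confined to a bounded region as soon as a single one of their edges is pinned down on $\gamma$. The supporting lemma that a cut $A$ can be recovered from $\delta A$ as ``the component of $x$'' — which uses connectedness of both $A$ and $VY-A$, i.e.\ the full definition of a cut — is what upgrades ``finitely many boundaries'' to ``finitely many cuts.''
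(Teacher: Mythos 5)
Your proof is correct and follows essentially the same route as the paper's: reduce to showing there are finitely many possible edge boundaries $\delta A$, using the fact that the path $\gamma$ must cross $\delta A$ and that $\delta A$ is an $\Aut(Y)$-translate of the fixed finite set $\delta C$. The paper states this more tersely (``the local finiteness of $Y$ implies that there are only finitely many possibilities for $\delta(gC)$'') while you spell out the bounded-radius argument explicitly; both also use the same recovery step that $\delta A$ determines $A$ up to the two-element choice of component.
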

\begin{proof}
	Clearly, $\cP(Y,C)$ is a family of non-empty subsets of $VY$ that is closed under complementation.
	It remains to show that, for $x,y\in VY$, the set $S=\{A\in\cP(Y,C)\mid x\in A,\, y\notin A\}$ is finite.
	Indeed, fix a path $\gamma$ from $x$ to $y$. Given $A\in S$, we either have $A=gC$ or $A=VY-gC$ for some $g\in\Aut(Y)$. Now, $\delta (gC)$ determines the pair $(gC,VY-gC)$, so it suffices to show that there are only finitely many possibilities for $\delta (gC)$. But the path $\gamma$ contains an edge in $\delta (gC)$, and $\delta (gC)$ is an $\Aut(Y)$-translate of the finite edge set $\delta C$, so the local finiteness of $Y$ implies that there are only finitely many possibilities for $\delta (gC)$.
\end{proof}

The key result of \cite{Kron10} we will use is the following.

\begin{theorem}\cite[Theorem 3.3]{Kron10}\label{thm:Kron}\\
	Let $Y$ be a locally finite simplicial graph with more than one end. Then $Y$ has a minimal cut $C$ such that the pocset $\cP(Y,C)$ defined in Proposition \ref{prop:cutpocset} has no pairs of transverse elements, or, equivalently, such that $C(\cP(Y,C))$ is a tree.
\end{theorem}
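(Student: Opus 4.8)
The plan is to follow Dunwoody's structure-tree method \cite{Dunwoody82}, in the streamlined form due to Kr\"on, and to produce a single minimal cut $C$ whose $\Aut(Y)$-orbit, together with all complements, is a nested family. Granting this, the theorem follows formally: any two elements of $\cP(Y,C)$ are of the form $g_1C$ or $g_1C^*$ and $g_2C$ or $g_2C^*$, and since being transverse is preserved by the involution in either coordinate and by the $\Aut(Y)$-action, it suffices that $C$ is nested with $gC$ (or with $(gC)^*$) for every $g\in\Aut(Y)$. Then $\cP(Y,C)$ has no transverse pair, hence has finite width, and being a wallspace (Proposition \ref{prop:cutpocset}) it admits a DCC ultrafilter; so Proposition \ref{prop:cubing} yields a cubing of dimension at most $1$, i.e.\ a tree. (Conversely, a tree is $1$-dimensional, so the two conditions in the statement are indeed equivalent.)

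First I would record the elementary facts that drive the argument. \emph{Submodularity}: for all $A,B\subseteq VY$ one has $|\delta(A\cap B)|+|\delta(A\cup B)|\le|\delta A|+|\delta B|$, proved by weighing the contribution of each edge according to the membership of its two endpoints; applying this with $B$ replaced by $B^*$ gives the companion bound for the ``diagonal'' corners $A\cap B^*$ and $A^*\cap B$. \emph{Components shrink boundaries}: if $A'$ is a connected component of the subgraph induced by a subset $A\subseteq VY$, then $\delta A'\subseteq\delta A$, since an edge leaving $A'$ but remaining inside $A$ would contradict $A'$ being a component. Now let $m\ge1$ be the minimum of $|\delta C|$ over all cuts $C$ (attained because $Y$ has a cut and $|\delta C|\in\N$); a \emph{minimal cut} is a cut with $|\delta C|=m$. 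The point of minimality is the resulting ``floor'': if a subset $S\subseteq VY$ has $|\delta S|\le m$ and possesses a connected component $A'$ that is unbounded and has connected unbounded complement, then $A'$ is automatically a minimal cut.

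The heart of the proof is the \emph{uncrossing move}. Suppose minimal cuts $C$ and $D$ are transverse, so all four corners $C\cap D$, $C\cap D^*$, $C^*\cap D$, $C^*\cap D^*$ are non-empty. Since $C,D$ are connected and meet, $C\cup D$ is connected; since $C^*,D^*$ are connected and meet, $C^*\cup D^*$ is connected. Hence the complement of the corner $C\cap D$ equals $C^*\cup D^*$ together with the remaining (smaller) components of $C\cap D$, each of which attaches to $C^*\cup D^*$ along its boundary, so this complement is connected; likewise for $C^*\cap D^*$. When these corners have unbounded components $C'\subseteq C\cap D$ and $D'\subseteq C^*\cap D^*$, the floor observation makes $C'$ and $D'$ minimal cuts, and by construction $C'\subseteq(D')^*$ while each of $C',D'$ is nested with both $C$ and $D$ and $C'\subsetneq C$: so the transverse pair $\{C,D\}$ is replaced by a nested pair, moving strictly inward. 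The remaining case, in which a corner has no unbounded component, forces one of the unions to be co-bounded and hence $C,D$ to be nested up to a finite set, and is dispatched separately. Finally, one sets up a well-founded descent: fixing a pair of vertices $u,v$, only finitely many minimal cuts separate them (by local finiteness), and iterating the inward uncrossing from any minimal cut therefore terminates; this produces a minimal cut $C$ transverse to none of its $\Aut(Y)$-translates, which is what we needed.

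The main obstacle is precisely this last paragraph: checking that after passing to connected components of a corner both sides genuinely remain connected and unbounded so that one really obtains cuts; handling the degenerate case where a corner fails to have an unbounded component; and, most importantly, organising the uncrossing into a genuinely terminating induction, since replacing $\{C,D\}$ by $\{C',D'\}$ removes one crossing but the new cuts may still cross other $\Aut(Y)$-translates. This is the graph-theoretic core of \cite{Dunwoody82} and of Kr\"on's refinement \cite{Kron10}; everything else, as sketched above, is formal bookkeeping with the pocset.
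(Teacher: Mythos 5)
The paper does not prove this statement at all: it is quoted verbatim as \cite[Theorem 3.3]{Kron10}, with only a remark reducing the multi-edge case to the simple-graph case via barycentric subdivision. So there is no in-paper argument to compare against, and the real question is whether your sketch would stand on its own as a proof of Krön's theorem. It would not, and the gap is exactly where you flag it. The submodularity inequality, the ``components shrink boundaries'' observation, and the floor argument showing that a suitable unbounded corner component is again a minimal cut are all correct and are indeed the standard ingredients. But the descent you propose — replace a transverse pair $\{C,D\}$ by a nested pair $\{C',D'\}$ with $C'\subsetneq C$ and appeal to the finiteness of the set of minimal cuts separating two fixed vertices $u,v$ — does not terminate for the stated reason: the new cut $C'$ need not separate $u$ from $v$ at all (so the finite set you are descending through is not actually a well-founded measure for the iteration), and $C'$ may cross translates $gC'$ that $C$ did not cross, so uncrossing one pair can create new crossings among the translates. ``Iterating the inward uncrossing therefore terminates'' is an assertion, not an argument.

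The missing idea in \cite{Kron10} is an extremal choice rather than an iteration: for a minimal cut $C$ one shows that the number $m(C)$ of minimal cuts crossing $C$ is finite (this is where local finiteness enters, via the fact that any cut crossing $C$ has boundary meeting a fixed finite neighborhood of $\delta C$), one picks $C$ with $m(C)$ minimal, and one derives a contradiction from $m(C)>0$ by showing that a suitable corner cut $C'$ obtained by uncrossing satisfies $m(C')<m(C)$ — the key corner lemma being that every minimal cut crossing $C'$ already crosses $C$ or $D$, while $D$ itself crosses $C$ but not $C'$. Without this (or an equivalent well-founded quantity), your induction has no termination proof. Separately, your treatment of the case where a corner has no unbounded component (``is dispatched separately'') and the verification that $|\delta(C\cap D)|\le m$ — which requires knowing which pair of opposite corners to feed into submodularity so that the complementary union also has boundary at least $m$ — are also left open; these are fixable but nontrivial. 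In short: right strategy, correctly identified obstacles, but the combinatorial core that makes the theorem true is not supplied.
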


\begin{remark}
	Kr{\"o}n's theorem is actually in the context of simple graphs, but one can deduce the same result for graphs that contain multi-edges.
	Indeed, if a graph $Y$ contains multi-edges then one can consider its first barycentric subdivision $\dot{Y}$, which is a simple graph. There is a correspondence between cuts in $Y$ and cuts in $\dot{Y}$, so if the theorem holds for $\dot{Y}$ it must also hold for $Y$.
\end{remark}

\subsection{Halfspace cuts}\label{subsec:chopping}

We now turn to the proof of Theorem \ref{thm:smallerstabs}, which will be spread over Subsections \ref{subsec:chopping}--\ref{subsec:T'}.
Let $G\acts T$ be a splitting as in the theorem, so a splitting with minimal action and finitely generated edge stabilizers, and with $G$ one-ended and finitely generated.
And we assume that the splitting has some halfspace with more than one end.
Let $p:X\to T$ be a tree of spaces associated to $G\acts T$ (Definition \ref{defn:treeofspaces}).

Define a \emph{halfspace cut} to be a pair $(\h,C)$ where $\h\in\cH(X)$ and $C$ is a cut in $\h$.
As in Definition \ref{defn:cut}, let $\delta C$ denote the set of edges in $\h$ with one vertex in $C$ and one vertex in $V\h-C$.
We are assuming that there exists a halfspace $\h\in\cH(X)$ with more than one end, so there exists $C\subseteq V\h$ such that $(\h,C)$ is a halfspace cut.

\begin{lemma}\label{lem:caphhunbdd}
	If $(\h,C)$ is a halfspace cut, then $C\cap V\hh$ and $V\hh-C$ are both unbounded, and $\delta C$ contains at least one edge in $\hh$.
\end{lemma}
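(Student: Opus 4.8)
\textbf{Proof plan for Lemma \ref{lem:caphhunbdd}.}

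The plan is to exploit the fact that a tree of spaces $p:X\to T$ has the property that walls and halfspaces are ``deep'' in the sense of Lemma \ref{lem:hdeep}, together with the definition of a cut (Definition \ref{defn:cut}), which requires both $C$ and $V\h-C$ to be connected, unbounded, and to have finite edge boundary inside $\h$. First I would recall that $\hh:=\h\cap\h^*$ is unbounded because $G$ (hence $X$) is one-ended, so $\hh$ is infinite; moreover any path in $\h$ that leaves the (bounded) $r$-neighborhood of $\hh$ stays on the ``$\h$-side''. The key geometric input is that $\delta C$ is a \emph{finite} edge set, so $C$ and $V\h-C$ are each obtained from $\h$ by deleting finitely many edges; hence outside a bounded set, $\hh$ lies entirely in one of $C$ or $V\h-C$ unless $\delta C$ actually meets $\hh$.

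The core of the argument: suppose for contradiction that $\delta C$ contains no edge in $\hh$. Since $\hh$ is connected and $\delta C\cap E\hh=\emptyset$, the whole of $V\hh$ lies in $C$ or entirely in $V\h-C$; say WLOG $V\hh\subseteq C$. Now I claim $V\h-C$ must then be contained in a bounded neighborhood of $\hh$, contradicting Lemma \ref{lem:hdeep}. To see this: $V\h-C$ is connected with finite edge boundary $\delta C$ in $\h$; any vertex of $V\h-C$ is joined to $\hh$ only by crossing one of the finitely many edges of $\delta C$, so $V\h-C$ lies within $\h - (\text{a bounded nbhd of }\hh)$ \ldots but then $V\h-C$ is a union of connected components of $\h$ minus a bounded set, and since $X$ (hence $\h$) is one-ended and $\hh$ is unbounded, every such component touching infinity must also meet a neighborhood of $\hh$. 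More cleanly: $\h^* - \hh$ is unbounded and disjoint from $\h-\hh$; since $V\hh\subseteq C$, the unbounded set $\h^*\cap\h = \hh$ ``drags'' the infinite part of $\h$ to the $C$-side, and $V\h-C$, being connected, unbounded, and separated from $C$ by the finite set $\delta C$, would give a second end of $X$ separated from $\hh$ — contradicting one-endedness of $X$ exactly as in the proof of Lemma \ref{lem:oneendedvertexgroup}. Hence $\delta C$ meets $\hh$.

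Finally, once we know $\delta C$ contains an edge $e\in E\hh$, both endpoints of $e$ lie in $\hh$, one in $C$ and one in $V\h-C$, so $C\cap V\hh\ne\emptyset$ and $V\hh-C\ne\emptyset$. To upgrade ``nonempty'' to ``unbounded'': $C\cap V\hh$ cannot be bounded, for otherwise the unbounded connected set $V\h-C$ would satisfy $V\hh - (V\h-C) = C\cap V\hh$ bounded, making $V\h-C\supseteq \hh$ outside a bounded set, whence $C$ itself would lie in a bounded neighborhood of $\hh$ (using finiteness of $\delta C$ and one-endedness of $X$ as above), contradicting that $C$ is unbounded together with Lemma \ref{lem:hdeep}; and symmetrically for $V\hh-C$. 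I expect the main obstacle to be making the ``$V\h-C$ is contained in a bounded neighborhood of $\hh$'' step fully rigorous: one must carefully use that $\delta C$ is finite, that $\h - N_r(\hh)$ has all its unbounded components meeting $\hh$ (a one-endedness-of-$X$ argument in the style of Lemma \ref{lem:oneendedvertexgroup}, pushing excess components into a finite modification of the separating set), and that $X$ being one-ended forbids $\delta C$ from separating two unbounded pieces both far from $\hh$. Everything else is bookkeeping with connectedness and finite edge boundaries.
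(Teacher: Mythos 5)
Your overall strategy matches the paper's: both rest on the facts that $\delta C$ is a finite edge set, that $\hh$ is connected and unbounded (Lemma \ref{lem:hdeep}), that a path leaving $\h$ must cross $\hh$, and on deriving a contradiction with one-endedness of $X$ by exhibiting a finite edge set separating two unbounded pieces. You derive the "$\delta C$ meets $\hh$" assertion first and get unboundedness afterward, whereas the paper proves unboundedness of $C\cap V\hh$ and $V\hh-C$ first (via the finite set $\delta^+ := \delta C \cup \{\text{edges incident to } C\cap V\hh\}$, which separates $C$ from $V\h-C$ in $X$) and then gets the edge by taking a path in $\hh$ from $C\cap V\hh$ to $V\hh-C$. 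Your argument for the edge is correct and clean: if $\delta C$ avoids $E\hh$ then connectedness of $\hh$ forces $V\hh\subseteq C$ (say), and then $\delta C$ is a finite set separating the unbounded $V\h-C$ from the rest of $X$, contradiction.

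However, your unboundedness argument contains a genuine gap. The step "whence $C$ itself would lie in a bounded neighborhood of $\hh$" does not follow: $C\cap V\hh$ being bounded says nothing about how far $C$ reaches from $\hh$; $C$ can extend arbitrarily far into $\h$ without touching $\hh$ at all. And even if $C$ were in a bounded neighborhood of $\hh$, this would not contradict $C$ being unbounded (since $\hh$ itself is unbounded), nor would it contradict Lemma \ref{lem:hdeep}, which speaks about $\h$ and $\h^*$, not about $C$. The correct and direct route (which you gesture at with "using finiteness of $\delta C$ and one-endedness of $X$ as above") is: if $C\cap V\hh$ is bounded, then $\delta C$ together with the finitely many edges incident to $C\cap V\hh$ forms a finite edge set separating $C$ from $X-C$ in $X$ (paths from $C$ leaving $\h$ pass through $C\cap V\hh$; paths within $\h$ reaching $V\h-C$ cross $\delta C$); since $C$ and $V\h-C\subseteq X-C$ are both unbounded, this contradicts one-endedness of $X$. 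Replace the misstated intermediate claim with this and the proof is complete.
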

\begin{proof}
	Suppose $C\cap V\hh$ is bounded (argument similar for $V\hh-C$). The wall $\hh$ is unbounded by Lemma \ref{lem:hdeep}, so $V\hh-C$ is unbounded. Let $\delta^+$ be the union of $\delta C$ and all edges in $X$ that are incident to vertices in $C\cap V\hh$. 
	Any path in $X$ that starts in $C$ and leaves $\h$ must cross over $\hh$, and any path in $\h$ that starts in $C$ and reaches $V\h-C$ must cross over $\delta C$.
	Hence $\delta^+$ is a finite set of edges that separates $C$ from $V\h-C$ in $X$, contradicting one-endedness of $X$ (note that $C$ and $V\h-C$ are both unbounded).
	
	Since $C\cap\hh$ and $\hh-C$ are both unbounded, so in particular non-empty, there exists a path in $\hh$ from $C\cap\hh$ to $\hh-C$ (recall that $\hh$ is connected).
	As $\delta C$ separates $C$ from $V\h-C$ in $\h$, this path must contain an edge in $\delta C$.
\end{proof}

Given a halfspace cut $(\h,C)$, let $W(C)$ be the number of edges in $\delta C$ that are contained in walls of $X$. Say that $(\h,C)$ is \emph{$W$-minimal} if $W(C)$ is minimal among all halfspace cuts.

\begin{lemma}\label{lem:oneunbounded}
	Let $(\h,C)$ be a $W$-minimal halfspace cut. If $\delta C$ separates a halfspace $\h'\subsetneq\h$ into more than one component, then only one of these components is unbounded.
\end{lemma}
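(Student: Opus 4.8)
The plan is to argue by contradiction. Let $D_1,D_2,\dots$ denote the components into which $\delta C$ separates $\h'$, that is, the components of the graph obtained from $\h'$ by deleting the edges of $\delta C$ that lie in $E\h'$; suppose, for contradiction, that at least two of them — say $D_1$ and $D_2$ — are unbounded. From this I will build a halfspace cut $(\h',C')$ with $W(C')<W(C)$, contradicting the $W$-minimality of $(\h,C)$.

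The first step is the observation that $\hh\cap\h'=\emptyset$. Indeed $\hh\subseteq\h^*$, and since $\h'\subsetneq\h$ the pocset axioms give $\h^*\subseteq(\h')^*$; hence $\hh\subseteq(\h')^*$, so $\hh\cap\h'\subseteq\h'\cap(\h')^*=\hh'$, and since $\hh$ and $\hh'$ are distinct walls (as $\h\neq\h'$) they are disjoint. Combined with Lemma \ref{lem:caphhunbdd}, which supplies an edge of $\delta C$ inside $\hh$, this shows that $\delta C$ has at least one wall-edge not lying in $E\h'$; therefore the number of wall-edges of $\delta C$ contained in $E\h'$ is at most $W(C)-1$.

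For the construction: since $\h'$ is locally finite and $\delta C\cap E\h'$ is finite, there are only finitely many components $D_1,\dots,D_n$ (so $n\geq2$). Form the finite graph $\Gamma$ with vertex set $\{D_1,\dots,D_n\}$, where each edge of $\delta C\cap E\h'$ contributes an edge of $\Gamma$ between the two components it meets; since $\h'$ is connected, $\Gamma$ is connected. Choose a spanning tree $S$ of $\Gamma$ and an edge $e$ of $S$ on the $S$-path from $D_1$ to $D_2$; then $S-e$ splits as a disjoint union of subtrees $P_1\ni D_1$ and $P_2\ni D_2$ whose vertex sets partition $V\Gamma$. Define $C':=\bigcup_{D\in P_1}VD\subseteq V\h'$, so that $V\h'-C'=\bigcup_{D\in P_2}VD$. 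Then $C'$ and $V\h'-C'$ induce connected, unbounded subgraphs of $\h'$ — connected because each $D$ is connected and the edges of $P_1$ and of $P_2$ (being genuine edges of $\h'$) glue their respective components together, and unbounded because $D_1\subseteq C'$ and $D_2\subseteq V\h'-C'$ are unbounded — and $\delta_{\h'}C'$ is exactly the set of edges of $\Gamma$ running between $P_1$ and $P_2$, hence a finite subset of $\delta C\cap E\h'$. So $(\h',C')$ is a halfspace cut, and the wall-edges of $\delta_{\h'}C'$ are wall-edges of $\delta C$ lying in $E\h'$, so $W(C')\leq W(C)-1$ by the previous paragraph — contradicting the $W$-minimality of $(\h,C)$. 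Hence $\h'-(\delta C\cap E\h')$ has at most one unbounded component, and since $\h'$ is unbounded, exactly one.

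I expect the connectivity condition in the definition of a cut to be the main obstacle: naive choices such as $C'=VD_1$ or $C'=C\cap V\h'$ need not have connected complement in $\h'$, because two components of $\h'-\delta C$ lying on the same side of $C$ may only be joined by a path of $\h$ that leaves $\h'$. Passing to the component graph $\Gamma$ and cutting along a single spanning-tree edge is precisely the device that forces both sides of $C'$ to be connected within $\h'$; the strict inequality $W(C')<W(C)$ is then immediate from $\hh\cap\h'=\emptyset$ together with Lemma \ref{lem:caphhunbdd}.
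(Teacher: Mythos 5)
Your proof is correct and follows essentially the same strategy as the paper: assume two unbounded components, manufacture a halfspace cut $(\h',C')$ with $\delta C'\subseteq\delta C\cap E\h'$, and note that the $\hh$-edge of $\delta C$ supplied by Lemma \ref{lem:caphhunbdd} lies outside $E\h'$, so $W(C')<W(C)$, contradicting $W$-minimality. The paper condenses the construction of $(\h',C')$ into one sentence — ``let $\delta'\subseteq\delta C\cap E\h'$ be such that $\h'-\delta'$ has exactly two unbounded components $C'$ and $D'$'' — leaving to the reader the verification that a suitable $\delta'$ exists with $C'$ and $V\h'-C'$ both connected (in particular, that bounded components and extraneous unbounded components can be absorbed). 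Your component-graph/spanning-tree device spells out exactly that verification, and you also make explicit (via $\hh\cap\h'=\emptyset$, using disjointness of walls) why the $\hh$-edge cannot lie in $E\h'$, a point the paper takes for granted. So the two proofs are the same in substance; yours is a careful unpacking of the paper's.
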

\begin{proof}
	Suppose for contradiction that $\delta C$ separates a halfspace $\h'\subsetneq\h$ into more than one unbounded component. Let $\delta'\subseteq\delta C\cap E\h'$ be such that $\h'-\delta'$ has exactly two unbounded components, $C'$ and $D'$.
	This means that $(\h',C')$ is a halfspace cut, with $\delta C'=\delta'$. Lemma \ref{lem:caphhunbdd} tells us that $\delta C$ contains an edge in $\hh$, and as this edge is not in $\delta C'$ we deduce that $W(C')<W(C)$, contradicting $W$-minimality of $(\h,C)$.
\end{proof}

\begin{lemma}\label{lem:noh'edges}
	Let $(\h,C)$ be a $W$-minimal halfspace cut. Then each halfspace $\h'\subsetneq\h$ satisfies either $V\h'\subseteq C$ or $V\h'\subseteq V\h-C$.
\end{lemma}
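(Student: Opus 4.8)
The plan is to argue by contradiction: suppose $(\h,C)$ is a $W$-minimal halfspace cut but some halfspace $\h'\subsetneq\h$ meets both $C$ and $D:=V\h-C$. Because $\h'$ is connected and $\delta C$ consists exactly of the edges of $\h$ crossing the partition $V\h=C\sqcup D$, this forces $\delta C\cap E\h'\ne\emptyset$, so $\h'-\delta C$ is disconnected with each component lying entirely in $C$ or entirely in $D$. By Lemma \ref{lem:oneunbounded}, exactly one component $U$ is unbounded; interchanging $C$ and $D$ (legitimate, since this leaves $W$ and the notion of halfspace cut unchanged) we may assume $VU\subseteq C$, so that $B:=V\h'\cap D$ is a nonempty bounded union of components of $\h'-\delta C$. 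A short connectivity argument shows each such component is joined to $V\h'\cap C$ by an edge of $\delta C$, whence $C\cup B$ is connected.

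Next I would isolate the following structural fact, immediate from the tree-of-spaces picture: any edge of $X$ with one endpoint in $V\h'$ and the other in $V\h-V\h'$ has its $V\h'$-endpoint in $V\hh'$ (where $\hh':=\h'\cap(\h')^*$) and projects in $T$ to a half-edge rather than a midpoint, hence is not contained in any wall of $X$. Now split into cases according to whether $\delta C$ meets $E\hh'$. If $\delta C\cap E\hh'=\emptyset$, then $\hh'$, which is connected and (by Lemma \ref{lem:hdeep}) unbounded, lies inside the unbounded component $U$, so $V\hh'\subseteq C$; by the structural fact $B$ is then incident to no edge of $\h$ leaving $V\h'$, so $B$ is a union of connected components of the connected graph $D$ --- impossible, since $B$ is nonempty and bounded but $D$ is unbounded.

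In the remaining case there is an edge $g\in\delta C\cap E\hh'$; it is a wall-edge, and since both its endpoints lie in $V\hh'\subseteq V\h'$ its $D$-endpoint lies in $B$. Here I perform surgery on the cut: choose an unbounded component $D_1$ of $D-B$ (one exists, $D$ being connected, locally finite and unbounded while $B$ is finite) and set $C':=V\h-D_1$. One checks that $(\h,C')$ is again a halfspace cut: $V\h-C'=D_1$ is connected and unbounded; $C'$ equals $C$ together with $B$ and the remaining components of $D-B$, which is connected because $B$ attaches to $C$ and every component of $D-B$ attaches to $B$; and $\delta C'$ is finite, consisting of the edges of $\delta C$ from $C$ to $D_1$ together with some edges from $B$ to $D_1$. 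The latter are not wall-edges by the structural fact, whereas $g$ is a wall-edge of $\delta C$ running from $C$ to $B$, not to $D_1$; hence $W(C')\le W(C)-1<W(C)$, contradicting $W$-minimality of $(\h,C)$.

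The delicate step is the surgery: one must verify carefully that $C':=V\h-D_1$ genuinely defines a halfspace cut --- so that it is an admissible competitor against $(\h,C)$ for $W$-minimality --- and one must bookkeep $\delta C'$ accurately, the crucial observation being that the only newly created boundary edges run between $B$ and $D_1$ and so, by the structural fact, are not wall-edges. Both points ultimately rest on the tree-of-spaces structure of $p\colon X\to T$: connectivity of $C'$ is assembled from the fact that the pieces being glued (the components of $\h'-\delta C$ lying in $D$, and the components of $D-B$) are attached either along edges of $\delta C$ or across the wall $\hh'$, and the non-wall-edge claim is exactly the structural fact, proved by projecting to $T$.
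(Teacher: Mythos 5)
Your proof is correct, and at its core it builds the same new cut as the paper's proof, just seen from the opposite side. The paper normalizes so that $C\cap V\h'$ is finite and then \emph{shrinks} $C$ to an unbounded component $D$ of $C-V\h'$; you normalize the other way (so that $B:=V\h'-C$ is bounded) and \emph{grow} $C$ to $C'=V\h-D_1$, where $D_1$ is an unbounded component of $(V\h-C)-V\h'$. Under the swap $C\leftrightarrow V\h-C$ (which leaves $\delta C$ and $W$ unchanged), your $D_1$ is exactly the paper's $D$, and your $C'$ is its complement, so the two constructions produce the same element of the pocset up to the involution. Where you genuinely add something: you make explicit the ``structural fact'' that any edge of $X$ with one endpoint in $V\h'$ and one in $V\h-V\h'$ has its inside endpoint on $\hh'$ and is never a wall-edge --- the paper invokes this silently when asserting that the new boundary edges ``are not contained in walls'', so stating and proving it via the simplicial projection $p\colon X\to T$ is a clarifying improvement. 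Where the paper is tighter: it avoids your two-case split by proving directly (via a path in $C$ from $\h'$ to $C-\h'$, giving $C\cap V\hh'\ne\emptyset$, and then a path in the connected, unbounded wall $\hh'$ crossing $\delta C$) that $\delta C$ always contains a wall-edge of $\hh'$; this renders your Case~1 vacuous. Your Case~1 argument is correct but logically redundant, and folding its connectedness reasoning into a proof that the wall-edge $g$ always exists, as the paper does, yields a somewhat leaner exposition. Both write-ups must, and do, verify that the modified set is again a legitimate halfspace cut, which is the bookkeeping you correctly flag as the delicate step.
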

\begin{proof}
	Suppose for contradiction that $\h'\subsetneq\h$ is a halfspace such that $C\cap V\h'$ and $V\h'-C$ are both non-empty.
	By Lemma \ref{lem:oneunbounded}, only one of $C\cap V\h'$ and $V\h'-C$ is infinite.
	Say $C\cap V\h'$ is finite.
	Now $C\cap V\h'$ separates the induced subgraph of $C$ into only finitely many components, so at least one of these components is unbounded -- say it has vertex set $D$.
	As both $\h'$ and the induced subgraph of $V\h-C$ are connected, we see that the induced subgraph of $V\h-D$ is connected. Furthermore, the edge boundary of $D$ in $\h$ is finite since it is contained in the union of $\delta C$ and the set of edges that meet $C\cap V\h'$.
	Hence $(\h,D)$ is a halfspace cut.
	
	We now obtain a contradiction by showing that $W(D)<W(C)$.
	Indeed, $\delta D-\delta C$ only contains edges that join a vertex in $D\subseteq \h-\h'$ to a vertex in $C\cap V\hh'$, and these edges are not contained in walls.
	On the other hand, any path in the induced subgraph of $C$ from $\h'$ to $C-\h'$ crosses $\hh'$, so $C\cap V\hh'\neq\emptyset$, and any path in $\hh'$ from $C\cap V\hh'$ to $V\hh'-C$ crosses $\delta C$, so $\delta C-\delta D$ contains at least one edge in the wall $\hh'$.
\end{proof}

\begin{figure}[H]
	\centering
		\begin{tikzpicture}[auto,node distance=2cm,
			thick,every node/.style={},
			every loop/.style={min distance=2cm},
			hull/.style={draw=none},
			scale=.8
			]
			\tikzstyle{label}=[draw=none]

			\draw[rounded corners=20pt] (-2,8)--(-2,6)--(4,6)--(4,3)--(2,3)--(2,4);
			\draw[rounded corners=20pt] (1,4)--(1,2)--(6,2);
			
			\draw[fill=black!30!green,opacity=.2](0,0)--(0,6){[rounded corners=20]--(4,6)--(4,3)--(2,3)}--(2,4)--(1,4){[rounded corners=20]--(1,2)}--(6,2)--(6,0)--(0,0);
			\draw[fill=black,opacity=.2](-6,-1)--(-6,8)--(-2,8){[rounded corners=20]--(-2,6)}--(0,6)--(0,0)--(6,0)--(6,-1)--(-6,-1);

			\draw[red,line width=1pt] (-6,0)--(6,0);
			\draw[draw=red,fill=black,-triangle 90, ultra thick](-3,0)--(-3,1);			
			\node[label,red] at (-2.4,.5){$\h$};
			
			\draw[red,line width=1pt] (-6,4)--(4,4);
			\draw[draw=red,fill=black,-triangle 90, ultra thick](-3,4)--(-3,5);			
			\node[label,red] at (-2.4,4.5){$\h'$};
			
			\node[label,black!30!green] at (4.7,4){$C$};
			\node[label] at (2,1){$D$};
			
		\end{tikzpicture}
	
	\caption{Cartoon picture for the proof of Lemma \ref{lem:noh'edges}.
	Here $D$ is the vertex set of an unbounded component of the induced subgraph of $C-\h'$.}\label{fig:CD}
\end{figure}

\begin{lemma}\label{lem:modX}
	Modifying $X$ if necessary, we may assume that all halfspace cuts $(\h,C)$ with $|\delta C|$ minimal are also $W$-minimal.
\end{lemma}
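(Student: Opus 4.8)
The goal is to modify the tree of spaces $X$ (without changing the group $G$ or the underlying tree $T$) so that minimality of $|\delta C|$ forces $W$-minimality. The natural strategy is to \emph{add edges} inside the walls of $X$: if we attach a large clique (or a sufficiently connected finite-degree graph, to preserve local finiteness) to a $G_e$-cocompact net in each wall $\hh=p^{-1}(\hat{e})$, then crossing a wall becomes ``cheap'' in the sense that the edge boundary of a halfspace cut gains many wall-edges. More precisely, the plan is: for each edge $e\in ET$, equivariantly choose finitely many $G_e$-orbits of vertices in the wall $p^{-1}(\hat e)$ forming a cobounded subset, and add $G$-equivariantly a family of new edges among these vertices so that the new graph $X'$ is still locally finite, still a tree of spaces for $G\acts T$ (the walls and halfspaces stay connected and the map to $T$ is unchanged), but now any cut whose boundary crosses a wall $\hh'$ is forced to contain \emph{many} edges of $\hh'$ — at least one more wall-edge than before for each wall it genuinely separates.

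The key computation is the following. First I would verify that adding edges within walls does not destroy the tree-of-spaces structure (Definition \ref{defn:treeofspaces}): walls and vertex/edge-midpoint preimages remain connected, local finiteness is preserved by the finite-degree choice of added edges, and the $G$-action stays geometric. Next, given a halfspace cut $(\h,C)$ in $X'$ with $|\delta C|$ minimal, I want to show it is $W$-minimal. Suppose not: then there is some halfspace cut $(\h'',D)$ in $X'$ with $W(D)<W(C)$. The point is to compare $|\delta C|$ and $|\delta D|$ modulo wall-edges. By the density of the added edges in each wall, if a cut boundary meets a wall $\hh'$ in $k\geq 1$ (ordinary) edges while actually separating $C\cap V\hh'$ from $V\hh'-C$ (which happens for every wall $\hh'$ with $\h'\subsetneq\h$ cutting $C$ nontrivially, by Lemma \ref{lem:caphhunbdd}-type reasoning), then in $X'$ the boundary must also contain all the newly-added edges between the two sides of that wall — and by choosing the added edges dense enough this number dominates any bounded quantity. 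Balancing: a cut with strictly fewer wall-edges has strictly fewer of the ``expensive'' added edges, so one can arrange that minimizing $|\delta C|$ over all halfspace cuts automatically picks out those with $W$ minimal. Concretely, I would fix, for each $G$-orbit of walls, a number $N$ larger than the (finitely many, up to $G$-translation) values of $|\delta C|$ that can occur for minimal halfspace cuts in the original $X$, and add enough edges so that separating the two sides of any wall costs at least $N$ added edges per wall; then a $W$-non-minimal cut in $X'$ pays at least $N$ more than a competing $W$-minimal one, contradicting minimality of $|\delta C|$.

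The main obstacle I expect is the bookkeeping needed to make the comparison genuinely work for \emph{all} halfspace cuts rather than just a cocompact family: one must ensure that the quantity ``$|\delta C|$ minus (a fixed large multiple of) the number of separated walls'' behaves well, and in particular that a halfspace cut in $X'$ with minimal $|\delta C|$ cannot achieve this minimum by separating extra walls in some pathological way. This requires knowing that the set of possible edge-boundary sizes of halfspace cuts in $X$ is, up to the $G$-action, finite — which follows from local finiteness of $X$ and cocompactness of the $G_e$-action on each wall's neighborhood, exactly as in Proposition \ref{prop:cutpocset}. A secondary technical point is equivariance and local finiteness of the added edges: one must add edges $G$-equivariantly, and since each wall $\hh$ has only a cobounded $G_e$-action, one can add $G_e$-invariantly a locally finite family (e.g., connect each chosen vertex to its near neighbors in the wall by $N$ parallel edges, or to all chosen vertices within a fixed radius) to boost the wall-crossing cost without creating infinite valence. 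Once these finiteness and equivariance issues are pinned down, the contradiction with minimality of $|\delta C|$ is immediate, giving the claim.
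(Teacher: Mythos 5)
Your high-level idea — modify $X$ inside the walls so that wall-crossings become expensive, so that minimizing $|\delta C|$ automatically picks out cuts with the fewest wall-edges — is the same as the paper's. But the implementation you propose has a real gap, and the paper's implementation is both simpler and actually makes the bookkeeping trivial rather than merely plausible.

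The paper's modification is: fix $n\geq 1$ and replace \emph{every} edge in every wall of $X$ by $n$ parallel edges, giving $X_n$. This has the crucial feature that it is linear in $W$: for any halfspace cut $(\h,C)$, one has exactly $|\delta C|_{X_n}=|\delta C|_X+(n-1)W(C)$. Consequently $W$-minimality is unchanged, and if $w$ denotes the minimum of $W$ over all halfspace cuts and $N$ the minimum of $|\delta C|_X$ among $W$-minimal ones, setting $n=N+1$ forces the minimum of $|\delta C|_{X_n}$ to be $N+(n-1)w$, attained only when $W(C)=w$ and $|\delta C|_X=N$. Your proposal of attaching cliques (or more generally bounded-degree families of new edges) to a cobounded net in each wall does not give this control. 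The number of new edges that a given $\delta C$ crosses depends on \emph{where} in the wall the cut passes, not just on how many wall-edges $\delta C$ contains, so your key claim that \emph{``a cut with strictly fewer wall-edges has strictly fewer of the `expensive' added edges''} is not justified and, with the clique approach, I don't see why it would be true; a cut could cross a wall through a thin part of the net and pay very little. (Your alternative phrasing, ``connect each chosen vertex to its near neighbors by $N$ parallel edges,'' is essentially the right move, but only if you do it for \emph{every} wall edge rather than on a cobounded net, and then you should just state it that way.)

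Two smaller points. First, the reduction you worry about — showing the set of boundary sizes of halfspace cuts is finite up to $G$ — is not needed: the paper only needs the trivial fact that a non-empty set of positive integers has a minimum, to define $w$ and $N$. Second, you should not be trying to compare arbitrary halfspace cuts $(\h,C)$ and $(\h'',D)$ in $X'$ by ``density'' arguments; once the linear formula $|\delta C|_{X_n}=|\delta C|_X+(n-1)W(C)$ is established, the comparison is a one-line inequality and the choice of $n$ does all the work.
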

\begin{proof}
	The modification of $X$ we consider is where we fix an integer $n\geq1$, and replace each edge in each wall of $X$ with $n$ edges that join the same pair of vertices. Let $X_n$ be the resulting graph, which is endowed with a collection of halfspaces and walls that are inherited from $X$ in the obvious way.
	Any halfspace cut $(\h,C)$ in $X$ is still a halfspace cut in $X_n$, and vice versa.
	The only difference is that the edge boundary $\delta C$ might become larger in $X_n$ because of the extra edges.
	More precisely, the numbers $W(C)$ and $|\delta C|$ both increase by $(n-1)W(C)$ when passing from $X$ to $X_n$.
	Note that a halfspace cut is $W$-minimal with respect to $X$ if and only if it is $W$-minimal with respect to $X_n$, but the property of having minimal $|\delta C|$ may change when passing between $X$ and $X_n$.
	Let $w$ be the smallest value of $W(C)$ among halfspace cuts $(\h,C)$ in $X$, and let $N$ be the smallest value of $|\delta C|$ among $W$-minimal halfspace cuts $(\h,C)$ in $X$.
	Put $n=N+1$.
	Then the smallest possible value of $|\delta C|$ in $X_n$ is $N+(n-1)w$, and this is only achieved by $W$-minimal halfspace cuts, as required.
\end{proof}

Now let $(\h_0,C)$ be a halfspace cut with $|\delta C|$ minimal.
Lemmas \ref{lem:noh'edges} and \ref{lem:modX} imply that each halfspace $\h\subsetneq\h_0$ satisfies either $V\h\subseteq C$ or $V\h\subseteq V\h_0-C$.
Furthermore, by Theorem \ref{thm:Kron} we can assume that the pocset $\cP(\h_0,C)$ has no pairs of transverse elements.

\subsection{The trees $T_e$}\label{subsec:Te}

For brevity we will write $G_0=G_{\h_0}$ for the $G$-stabilizer of $\h_0$ and we will write $\delta$ for the edge boundary $\delta C$.
Note that $\h_0$ corresponds to some edge $e_0\in ET$ (i.e. $p(\h_0)\subseteq T$ is bounded by $\hat{e}_0$) and $G_0=G_{e_0}$.
As in Subsection \ref{subsec:mincut}, we can form the pocset $\cP(\h_0,C)$.
Now consider the subpocset
$$\cP_0:=\{gC,V\h_0-gC\mid g\in G_0\}\subseteq\cP(\h_0,C).$$ 
Recall that the partial order on $\cP_0$ is inclusion and the involution is complementation in $V\h_0$, i.e. the map $D\mapsto V\h_0-D$.
We know that $\cP(\h_0,C)$ has no pairs of transverse elements, so $\cP_0$ also has no pairs of transverse elements.
Therefore, the cubing of $\cP_0$ is a tree, call it $T_0$.

\begin{lemma}\label{lem:T0}
	$G_0$ acts on $T_0$ cocompactly and with finite edge stabilizers.
\end{lemma}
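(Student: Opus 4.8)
The plan is to verify the hypotheses of Proposition \ref{prop:cubing} are met in a $G_0$-equivariant way, and then read off the stated properties of the action $G_0 \acts T_0$ from the combinatorial structure of $\cP_0$. First I would observe that $\cP_0$ is manifestly $G_0$-invariant (it is generated as a pocset by the $G_0$-orbit of $C$), so $G_0$ acts on the cubing $T_0$ simplicially; that $T_0$ is a tree is already established since $\cP_0$ has no transverse pairs. To see cocompactness, I would use the wallspace structure: the map $\lambda:V\h_0 \to T_0$ sending $v$ to the principal ultrafilter $\lambda(v) = \{D \in \cP_0 \mid v \in D\}$ is $G_0$-equivariant, and one checks (as in the standard Sageev machinery, cf. Definition \ref{defn:wallspace} and the references there) that every vertex of $T_0$ is within bounded distance of $\lambda(V\h_0)$ — indeed DCC ultrafilters differ from principal ones only by finitely many elements, and one can bound this using that $\h_0$ contains a ray. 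Since $G_0$ acts cocompactly on $\h_0$ (because $G$ acts cocompactly on $X$ and $\h_0$ is a halfspace, so $G_0 = G_{\h_0}$ acts cocompactly on $\h_0$ by the tree-of-spaces structure), and $\lambda$ is $G_0$-equivariant and coarsely surjective, $G_0$ acts cocompactly on $T_0$.

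Next I would address the finite edge stabilizers. An edge of $T_0$ corresponds to a pair $\{D, V\h_0 - D\}$ with $D \in \cP_0$, i.e.\ to some translate $gC$ with $g \in G_0$; its stabilizer in $G_0$ is the stabilizer of the pair $\{gC, V\h_0 - gC\}$, equivalently (up to index $2$) the setwise stabilizer of $gC$, equivalently a conjugate of the setwise stabilizer of $C$. So it suffices to show $\mathrm{Stab}_{G_0}(C)$ is finite up to conjugation and index $2$ — actually just that $\{g \in G_0 \mid gC = C\}$ is finite. The key point is that an element $g$ fixing $C$ must fix $\delta C = \delta$ setwise, and $\delta$ is a finite edge set; since the $G$-action on $X$ is proper (it is geometric) and $\delta$ has finitely many vertices, the stabilizer of $\delta$ in $G$ is finite, hence so is $\mathrm{Stab}_{G_0}(C)$.

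The main obstacle I anticipate is the cocompactness argument, specifically the uniform bound on how far DCC ultrafilters are from principal ones. This is where one genuinely uses that $\h_0$ is a cut-complement that is unbounded (contains a ray) and that the walls $\delta(gC)$ are uniformly finite $G_0$-translates of $\delta$: given a DCC ultrafilter $\omega$, the hyperplanes separating $\omega$ from a nearby principal ultrafilter $\lambda(v)$ all correspond to translates $gC$ with $\delta(gC)$ "close" to $v$ in a suitable sense, and local finiteness of $X$ together with the fixed finite size of $\delta$ bounds the number of such $gC$ independently of $\omega$. I would phrase this carefully using that $\h_0$ is locally finite and that the pairwise-non-transverse property forces the relevant hyperplanes to be nested, so they form a chain whose length is bounded by the distance from $v$ to the "innermost" wall, which is bounded by cocompactness of $G_0 \acts \h_0$. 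Modulo this standard-but-technical estimate, the rest of the lemma is formal.
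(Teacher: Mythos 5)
Your argument for finite edge stabilizers is essentially the paper's: an edge of $T_0$ corresponds to a pair $\{gC,V\h_0-gC\}$ for some $g\in G_0$, its stabilizer stabilizes the finite edge set $g\delta$ setwise, and properness of $G\acts X$ (hence of $G_0\acts\h_0$) forces this stabilizer to be finite.

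The cocompactness argument, however, has a genuine gap. Your plan hinges on the claim that ``$G_0$ acts cocompactly on $\h_0$ by the tree-of-spaces structure,'' but this is false in general. Here $G_0$ is the stabilizer of the halfspace $\h_0$, i.e.\ the edge stabilizer $G_{e_0}$, and $\h_0=p^{-1}(\h)$ where $\h$ is a whole halfspace of $T$, typically containing infinitely many vertices and hence infinitely many $G_0$-orbits of vertices of $X$. ($G_0$ does act cocompactly on the \emph{wall} $\hh_0=p^{-1}(\hat e_0)$, but not on the halfspace.) Consequently the ``$\lambda$ is $G_0$-equivariant and $G_0\acts\h_0$ is cocompact'' step does not give cocompactness of $G_0\acts T_0$, and the rest of the technical estimate you flag as the ``main obstacle'' is built on this unavailable fact. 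The paper avoids the whole issue with a much shorter observation that you miss: $\cP_0$ consists of a \emph{single} $G_0$-orbit of complementary pairs $\{gC,V\h_0-gC\}$, so $T_0$ has a single $G_0$-orbit of edges, and since $T_0$ is a tree a single orbit of edges already forces at most two orbits of vertices, giving cocompactness immediately without any wallspace estimate. I would replace the $\lambda$-based coarse surjectivity argument with this orbit count; the $\lambda$ map is used later in the paper (in the proof of Lemma \ref{lem:ET'toVTe}) but is not the right tool here.
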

\begin{proof}
	The edges of $T_0$ correspond to the pairs $\{gC,V\h_0-gC\}$ for $g\in G_0$, or equivalently to the edge boundaries $g\delta$ for $g\in G_0$. Hence $G_0$ acts on $T_0$ with a single orbit of edges and at most two orbits of vertices. Since $G_0$ acts properly on $\h_0$, and since the sets of edges $g\delta$ are finite, we deduce that $G_0$ acts on $T_0$ with finite edge stabilizers.
\end{proof}

\begin{lemma}\label{lem:T0nofixed}
	The action $G_0\acts T_0$ has no fixed point.
\end{lemma}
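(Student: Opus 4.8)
The plan is to argue by contradiction: suppose $G_0$ fixes a point of $T_0$, and first reduce to the case of a fixed vertex. If instead $G_0$ fixes the midpoint of an edge of $T_0$, which corresponds to a pair $\{P,P^*\}\subseteq\cP_0$, then the index‑at‑most‑$2$ subgroup of $G_0$ fixing $P$ stabilises the finite edge set $\delta P$ (a $G_0$‑translate of $\delta C$); since $G_0$ acts properly on $\h_0$ this subgroup is finite, so $G_0$ is finite. But $\h_0=p^{-1}(\h)$ for a halfspace $\h$ of $T$ bounded by an edge midpoint $\hat e$, so $G_0=G_e$, and the wall $\hh_0=p^{-1}(\hat e)$ is unbounded (Lemma~\ref{lem:hdeep}) with $G_0$ acting cocompactly on it, forcing $G_0$ infinite — a contradiction. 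So $G_0$ fixes a vertex $\omega$ of $T_0$.

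Next I would use the structure of the action. By the proof of Lemma~\ref{lem:T0}, $G_0\acts T_0$ has a single orbit of edges; since $\omega$ is fixed, every edge is incident to $\omega$, so $T_0$ is a star with centre $\omega$. Recall the canonical $G_0$‑equivariant map $\lambda\colon V\h_0\to VT_0$, $\lambda(x)=\{A\in\cP_0\mid x\in A\}$. For each leaf $\ell$ of $T_0$ the unique incident edge corresponds to a pair in $\cP_0$, exactly one side of which equals $\lambda^{-1}(\ell)$; call this side a \emph{leaf halfspace}. Up to replacing $C$ by $C^*$ we may assume $C^*=\lambda^{-1}(\ell_0)$ is a leaf halfspace, whence $\{gC^*\mid g\in G_0\}$ is the full set of leaf halfspaces (single orbit of leaves). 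Since distinct leaves have disjoint $\lambda$‑preimages, $gC^*\cap C^*=\emptyset$ whenever $g\notin N:=\mathrm{Stab}_{G_0}(C^*)$; and $N$ is finite (it stabilises the finite set $\delta C$) while $G_0$ is infinite, so $G_0\setminus N\neq\emptyset$.

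Finally I would derive a contradiction on the wall $\hh_0$, on which $G_0=G_e$ acts properly and cocompactly. Put $\delta_0:=\delta C\cap E\hh_0$; by Lemma~\ref{lem:caphhunbdd} the set $C^*\cap V\hh_0$ is unbounded and $\delta_0$ is non‑empty and finite, and $N':=\mathrm{Stab}_{G_0}(C^*\cap V\hh_0)\supseteq N$ is finite since it stabilises $\delta_0$. Because the $G_0$‑orbits in $\hh_0$ are coarsely dense while $C^*\cap V\hh_0$ is unbounded, one can choose a finite fundamental domain $F$ for $G_0\acts\hh_0$ contained in $C^*\cap V\hh_0$. Let $B$ bound both $d_{\hh_0}(x,nx)$ over $x\in F,\,n\in N'$ and $d_{\hh_0}(x,\delta_0)$ over $x\in F$, and use unboundedness of $C^*\cap V\hh_0$ to pick a vertex $y\in C^*\cap V\hh_0$ with $d_{\hh_0}(y,\delta_0)>2B$. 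Writing $y=gx_1$ with $x_1\in F$ and $g\in G_0$ gives $d_{\hh_0}(x_1,gx_1)=d_{\hh_0}(x_1,y)\geq d_{\hh_0}(y,\delta_0)-d_{\hh_0}(x_1,\delta_0)>B$, so $g\notin N'$, hence $g\notin N$. Since $g$ fixes $\hh_0$ and $x_1\in C^*\cap V\hh_0$, we get $y=gx_1\in gC^*$; but also $y\in C^*$, so $y\in gC^*\cap C^*$, contradicting $gC^*\cap C^*=\emptyset$. I expect the main obstacle to be exactly this last step — ruling out the star configuration — with the delicate points being the choice of the fundamental domain inside $C^*\cap V\hh_0$ and the verification $g\notin N'$ (both via coarse density of orbits and Lemma~\ref{lem:caphhunbdd}), together with the fact that $G_0$ acts cocompactly on $\hh_0$, which follows from cocompactness of $G\acts X$; everything else is bookkeeping.
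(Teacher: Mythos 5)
Your proof takes a genuinely different route from the paper's. You argue by contradiction: if $G_0$ fixes a point of $T_0$, you first rule out a fixed edge-midpoint (it would force $G_0$ to be finite, contradicting the cocompactness of $G_0\acts\hh_0$ and the unboundedness of $\hh_0$), then observe that a fixed vertex would make $T_0$ a star, identify the leaf halfspaces as the pairwise-disjoint $G_0$-translates $gC^*$, and finally contradict that disjointness. The paper instead directly exhibits a hyperbolic element: using unboundedness of $C\cap V\hh_0$ and $V\hh_0-C$ together with cocompactness of $G_0\acts\hh_0$, it produces $g_1,g_2\in G_0$ with $g_1\delta\subseteq C$ and $g_2\delta\subseteq V\h_0-C$, and a short nesting argument then yields $g\in G_0$ and $D\in\cP_0$ with $gD\subsetneq D$. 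The paper's route is considerably shorter and avoids the star analysis and its casework entirely; both approaches rest on the same geometric ingredients supplied by Lemma~\ref{lem:caphhunbdd}.

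There is, however, a genuine gap in your argument. The claim that one can choose a finite fundamental domain $F$ for $G_0\acts\hh_0$ with $F\subseteq C^*\cap V\hh_0$ does not follow from coarse density of $G_0$-orbits and is false in general: a $G_0$-orbit in $V\hh_0$ need not meet $C^*\cap V\hh_0$ at all. (For instance, if $V\hh_0$ were $\mathbb Z\times\{0,1\}$ with $G_0=\mathbb Z$ shifting the first coordinate and $C^*\cap V\hh_0=\mathbb Z_{\geq0}\times\{0\}$, the orbit of $(0,1)$ misses $C^*\cap V\hh_0$ entirely.) Your argument really uses $x_1\in F\subseteq C^*$ to conclude $gx_1\in gC^*$, so this is not cosmetic. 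The repair is small: fix a single $x_1\in C^*\cap V\hh_0$; its $G_0$-orbit is $R$-dense in $V\hh_0$ for some $R$. Choose $y\in C^*\cap V\hh_0$ with $d_{\hh_0}(y,\delta_0)>R$ and $d_{\hh_0}(y,x_1)>R+B$ where $B=\max_{n\in N}d_{\hh_0}(x_1,nx_1)$, and choose $g\in G_0$ with $d_{\hh_0}(gx_1,y)\leq R$. Then $gx_1\in C^*$, since any $\hh_0$-path of length $\leq R$ from $y$ cannot cross $\delta_0$; and $d_{\hh_0}(x_1,gx_1)>B$ forces $g\notin N$; yet $gx_1\in gC^*\cap C^*$, the contradiction you want. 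With this change your argument goes through.
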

\begin{proof}	
	It suffices to find $D\in\cP_0$ and $g\in G_0$ with $gD\subsetneq D$, as then $g$ will have no fixed point in $T_0$. Indeed, $g$ will translate along the axis of $T_0$ that contains the edges corresponding to $\{g^iD,V\h_0-g^iD\}$ for $i\in\Z$. 
	
	Recall that $C\cap V\hh_0$ and $V\hh_0-C$ are both unbounded by Lemma \ref{lem:caphhunbdd}. As $G_0$ acts cocompactly on $\hh_0$, and as the $G_0$-translates of $\delta$ all contain edges in $\hh_0$ (Lemma \ref{lem:caphhunbdd} again), there exist $g_1,g_2\in G_0$ with $g_1\delta$ contained in the induced subgraph of $C$ and $g_2\delta$ contained in the induced subgraph of $V\h_0-C$. 
	We deduce that one of the components of $\h_0-g_1 \delta$ contains both $\delta$ and $V\h_0-C$, and the other component is contained in the induced subgraph of $C$.
	So either $g_1C\subsetneq C$ or $V\h_0-C\subsetneq g_1C$.
	Similarly, we have $g_2(V\h_0-C)\subsetneq V\h_0-C$ or $C\subsetneq g_2(V\h_0-C)$.
	If $g_1C\subsetneq C$ or $g_2(V\h_0-C)\subsetneq V\h_0-C$ then we are done, otherwise $V\h_0-C\subsetneq g_1C$ and $C\subsetneq g_2(V\h_0-C)$, so $g_2C\subsetneq V\h_0-C\subsetneq g_1 C$, and we are again done because $g_1^{-1}g_2C\subsetneq C$.
\end{proof}

If $\{g_i\mid i\in\Omega\}$ is a left transversal of $G_0$ in $G$ then we get an induced action of $G$ on the product
$T_0\times G/G_0$, explicitly this is given by
\begin{equation}\label{inducedaction}
	g\cdot(v,g_iG_0):=(g_0v,g_jG_0),
\end{equation}
where $gg_i=g_jg_0$ with $i,j\in\Omega$ and $g_0\in G_0$, and $g_0v$ refers to the action of $G_0$ on $T_0$. 
(This construction is essentially the same as the notion of induced representation from representation theory.)
We may assume that the transversal $\{g_i\}$ includes the identity element, in which case the action of $G_0$ on $T_0\times\{G_0\}$ recovers the original action of $G_0$ on $T_0$.

We can then define the trees $T_e$ required for Theorem \ref{thm:smallerstabs}, and the action of $G$ on $\sqcup_eT_e$. Indeed, recall that there is an edge $e_0\in ET$ corresponding to $\h_0$ (i.e. $p(\h_0)\subseteq T$ is bounded by $\hat{e}_0$). Put 
$$T_{g_ie_0}:=T_0\times\{g_i G_0\}$$
for each $i\in\Omega$.
Note that the $G$-stabilizer of $T_{e_0}$ is $G_0=G_{e_0}$, and the action $G_0\acts T_{e_0}$ is a non-trivial finite splitting with finite edge stabilizers by Lemmas \ref{lem:T0} and \ref{lem:T0nofixed}.
Similarly, the $G$-stabilizer of $T_{g_ie_0}$ is $g_iG_0g_i^{-1}=G_{g_ie_0}$, and the action $G_{g_ie_0}\acts T_{g_ie_0}$ is also a non-trivial finite splitting with finite edge stabilizers because it is conjugate to the action $G_0\acts T_{e_0}$.
For $e\notin G\cdot e_0$ we define $T_e$ to be a single vertex and we define $gT_e=T_{ge}$.
Altogether this gives us an action of $G$ on $\sqcup_eT_e$ that is compatible with the action on $T$, and that satisfies properties \ref{item:Te} and \ref{item:nofix} from Theorem \ref{thm:smallerstabs}.

\subsection{An equivalence relation on $V\h_0$}

Define an equivalence relation on $V\h_0$ where $x\sim y$ if $x$ and $y$ are not separated in $\h_0$ by any $g\delta$ with $g\in G_0$. Equivalently, $x\sim y$ if there is no $D\in\cP_0$ with $x\in D$ and $y\notin D$.
Let $[x]$ denote the equivalence class of $x$, and let $[x]^*=VX-[x]$.

Since $T_0$ is the cubing of $\cP_0$, and $\cP_0$ forms a wallspace $(V\h_0,\cP_0)$, we have a map $\lambda:V\h_0\to VT_0$ as in Definition \ref{defn:wallspace}. Note that the equivalence classes $[x]$ are just the non-empty fibers of $\lambda$.
We now prove two lemmas about how these equivalence classes interact with halfspaces $\h\subsetneq\h_0$.

\begin{lemma}\label{lem:hin[x]}
	Each halfspace $\h\subsetneq\h_0$ is contained in a $\sim$-class $[x]$.
\end{lemma}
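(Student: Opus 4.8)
The plan is to show that no element of $\cP_0$ can separate two vertices of $\h$, which immediately gives the lemma: picking any $x\in V\h$ we then get $V\h\subseteq[x]$. So fix a halfspace $\h\subsetneq\h_0$ and an element $g\in G_0$; since every member of $\cP_0$ has the form $gC$ or $V\h_0-gC$, it suffices to prove that either $V\h\subseteq gC$ or $V\h\subseteq V\h_0-gC$.

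The key observation is that $g^{-1}\h$ is again a halfspace of $X$ (the $G$-action on $X$ induces an action on $\cH(X)$), and that it is \emph{properly} contained in $\h_0$: indeed $g^{-1}\h\subseteq g^{-1}\h_0=\h_0$ because $g\in G_0$ stabilizes $\h_0$, while $g^{-1}\h\neq\h_0$ because $g$ acts on $VX$ by a bijection and $\h\neq\h_0$. Now recall that Lemmas \ref{lem:noh'edges} and \ref{lem:modX} tell us every halfspace properly contained in $\h_0$ lies entirely inside $C$ or entirely inside $V\h_0-C$; applying this to $g^{-1}\h$ yields $V(g^{-1}\h)\subseteq C$ or $V(g^{-1}\h)\subseteq V\h_0-C$. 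Translating by $g$, and using that $g(V\h_0)=V\h_0$ so that $g(V\h_0-C)=V\h_0-gC$, we conclude $V\h\subseteq gC$ or $V\h\subseteq V\h_0-gC$, as wanted.

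Assembling the pieces: for any $x,y\in V\h$ and any $D\in\cP_0$ the above shows $V\h$ is either contained in $D$ or disjoint from $D$, so $D$ cannot contain $x$ while excluding $y$; hence $x\sim y$. Since $V\h$ is non-empty (a connected, in fact unbounded, subgraph of $X$ by Lemma \ref{lem:hdeep}), choosing any $x\in V\h$ gives $V\h\subseteq[x]$. I do not anticipate a real obstacle here; the only point requiring care is the bookkeeping for the $G_0$-action — verifying that $g^{-1}\h$ is genuinely a proper sub-halfspace of $\h_0$, which only uses that $G_0$ stabilizes $\h_0$ and acts by bijections — after which the statement is a direct transport of Lemmas \ref{lem:noh'edges} and \ref{lem:modX} along $G_0$.
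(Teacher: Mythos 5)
Your proof is correct and follows essentially the same argument as the paper: for each $D\in\cP_0$ you pull back by $g\in G_0$ to a halfspace $g^{-1}\h\subsetneq\h_0$, invoke the consequence of Lemmas \ref{lem:noh'edges} and \ref{lem:modX} (that every halfspace strictly inside $\h_0$ lies in $C$ or its complement), and push forward again. The extra care you take verifying that $g^{-1}\h$ is a proper sub-halfspace is correct bookkeeping the paper leaves implicit.
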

\begin{proof}
	Fix $\h\subsetneq\h_0$. Given $D\in\cP_0$, we can write $D=gC$ or $D=V\h_0-gC$ for some $g\in G_0$.	
	Note that $g^{-1}\h$ is another halfspace strictly contained in $\h_0$, so our choice of $(\h_0,C)$ at the end of Subsection \ref{subsec:chopping}	implies that we either have $g^{-1}V\h\subseteq C$ or $g^{-1}V\h\subseteq V\h_0-C$.
	Left multiplying by $g$ then implies that we either have $V\h\subseteq D$ or $V\h\subseteq V\h_0-D$.
	This holds for all $D\in \cP_0$, so the lemma follows.
\end{proof}

\begin{lemma}\label{lem:[y]*preceq}
	Suppose $g\in G$ with $g\h_0^*\subseteq\h_0$. 
	Then there exist $\sim$-classes $[x]$ and $[y]$ with $gV\h_0^*\subseteq[x]$ and $V\h_0^*\subseteq g[y]$.
	Furthermore, $g[y]^*$ is contained in a bounded neighborhood of $[x]$.
\end{lemma}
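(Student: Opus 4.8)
The plan is to unpack what the hypotheses say about the halfspaces involved and then use Lemma~\ref{lem:hdeep} together with cocompactness of the $G_0$-action on $\hh_0$. First, recall the situation: $g\h_0^*\subseteq\h_0$ means that the halfspace $g\h_0^*$ is strictly contained in $\h_0$, so by Lemma~\ref{lem:hin[x]} it lies in a single $\sim$-class $[x]$. Applying $g^{-1}$ to the containment $g\h_0^*\subseteq\h_0$ gives $\h_0^*\subseteq g^{-1}\h_0$, so $g^{-1}\h_0$ contains $\h_0^*$; writing this as $\h_0^*\subseteq g[y]$ after applying $g$ to the $\sim$-class $[y]$ (with $g^{-1}\h_0\subseteq[y]$, which again exists by Lemma~\ref{lem:hin[x]}) sets up the two classes. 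The goal is to show $g[y]^*=g(V\h_0-[y])$ lies in a bounded neighborhood of $[x]$.

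**Key geometric observation.**

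The complement $g[y]^*$ of $g[y]$ is contained in $VX-\h_0^*=\h_0$ (since $\h_0^*\subseteq g[y]$), so $g[y]^*\subseteq\h_0$. I would then argue that $g[y]^*$ is contained in a bounded neighborhood of the wall $\widehat{g\h_0^*}=g\hh_0$. The point is that $g[y]^*$ consists of vertices of $\h_0$ that, under the map $g^{-1}$, land in $V\h_0-[y]$, i.e. in $\sim$-classes of $\h_0$ other than $[y]$; since $g^{-1}\h_0\subseteq[y]$ (wait---rather $[y]$ is the class containing $g^{-1}\h_0$, which is all of $V\h_0$ minus perhaps other classes), the vertices in $[y]^*$ are ``shallow'' in the sense that they lie near the boundary walls of $\h_0$ inside $g^{-1}(\text{something})$. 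The cleanest route: observe that a vertex $v\in g[y]^*$ satisfies $g^{-1}v\notin[y]$, so $g^{-1}v$ is separated from $g^{-1}\h_0$ (which sits inside $[y]$) by some $h\delta$ with $h\in G_0$; translating back, $v$ is separated from $\h_0$ by $gh\delta$, which forces $v$ to lie in the finite-type region bounded by a translate of $\delta$, and crucially $v\in\h_0$ forces this translate of $\delta$ to be one whose corresponding $\sim$-class structure keeps $v$ near $g\hh_0$. Then since $G_0$ acts cocompactly on $\hh_0$, there is a uniform bound $R$ so that the $R$-neighborhood of $\hh_0$ in $\h_0$ contains $\delta$ and hence (after translating by the appropriate $g_0\in G_0$) all relevant ``shallow'' vertices; combined with $g\hh_0\subseteq g\h_0^*\subseteq[x]$, a vertex within $gR$... no---within bounded distance of $g\hh_0$ is within bounded distance of $[x]$.

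**Assembling the bound and the main obstacle.**

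Concretely I would set $L$ (the longest embedded path in the connecting subgraph $F\supseteq\delta$, already defined in the excerpt) as a comparison scale, and argue: any $v\in g[y]^*$ lies in $\h_0$, and is cut off from the ``deep'' part of $g^{-1}\h_0$ by a $G_0$-translate of $\delta$, so $v$ lies in a bounded-diameter region attached to that translate of $\delta$; since $\delta\subseteq F$ and $F$ has diameter $\le L$, and since $\delta$ meets $\hh_0$ (Lemma~\ref{lem:caphhunbdd}), every such region is within $L$ of $\hh_0$, hence $v$ is within distance $L$ (plus the diameter of $F$, so $\le 2L$ say) of $g\hh_0\subseteq[x]$. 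Therefore $g[y]^*$ lies in the $2L$-neighborhood of $[x]$. The main obstacle I anticipate is making rigorous the claim that ``$v\in g[y]^*$ is cut off from $g^{-1}\h_0$ by a single translate of $\delta$ and hence lies in a bounded region near $\hh_0$'': this requires carefully tracking which side of each wall $v$ and the deep part of $\h_0$ lie on, using that $g^{-1}\h_0\subseteq[y]$ means $g^{-1}\h_0$ is ``$\sim$-undivided,'' and that the only way for $g^{-1}v$ to leave $[y]$ while $g^{-1}v$ is still required (by $v\in\h_0$, i.e. $g^{-1}v\in g^{-1}\h_0$) to be... wait, $v\in\h_0$ does not give $g^{-1}v\in g^{-1}\h_0$ automatically unless $v\in\h_0$—it does, trivially. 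So $g^{-1}v\in g^{-1}\h_0\subseteq[y]$, contradicting $g^{-1}v\notin[y]$?  This suggests $g[y]^*\cap\h_0$ might actually be empty, in which case the statement is trivial once we note $g[y]^*\subseteq\h_0$. I would first double-check this subtlety---if $g^{-1}\h_0\subseteq[y]$ genuinely forces $g[y]^*\cap\h_0=\emptyset$ then $g[y]^*=\emptyset$ and the lemma is vacuous; more likely the correct reading is $\h_0^*\subseteq g[y]$ with $[y]$ possibly larger than $g^{-1}\h_0$, and the argument above via shallow vertices near $g\hh_0$ is the intended one. Resolving exactly which vertices can lie in $g[y]^*\cap\h_0$, and bounding their distance to $g\hh_0$ using cocompactness and the finite subgraph $F$, is the crux.
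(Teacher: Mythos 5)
Your proposal has a genuine gap, and the central claim it hinges on is false.

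You propose to show that $g[y]^*$ lies in a bounded neighborhood of the wall $g\hh_0$, and then conclude via $g\hh_0 \subseteq g\h_0^* \subseteq [x]$. But $g[y]^*$ cannot lie in a bounded neighborhood of $g\hh_0$: since $[y]\subseteq V\h_0$, we have $g[y]\subseteq V(g\h_0)$, so $g\h_0^*-g\hh_0$ is disjoint from $g[y]$ and hence contained in $g[y]^*=VX-g[y]$; and by Lemma~\ref{lem:hdeep}, the halfspace $g\h_0^*$ is \emph{not} contained in a bounded neighborhood of $g\hh_0$. So $g[y]^*$ contains a whole halfspace that escapes every bounded neighborhood of $g\hh_0$. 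This same set $g\h_0^*$ is handled trivially in the correct argument precisely because it sits \emph{inside} $[x]$ (being near $[x]$ is much weaker than being near $g\hh_0$, since $[x]$ contains $g\h_0^*$ and can be huge). Relatedly, you at one point write $g[y]^*=g(V\h_0-[y])$, but the paper's definition is $[x]^*=VX-[x]$, so $g[y]^*=VX-g[y]$; this is not a minor slip, as it is exactly what makes $g\h_0^*\subseteq g[y]^*$ and breaks your boundedness claim. You also repeatedly write $g^{-1}\h_0\subseteq[y]$ where the correct containment is $g^{-1}\h_0^*\subseteq[y]$ (equivalently $\h_0^*\subseteq g[y]$), and this slip is what leads you to (incorrectly) suspect $g[y]^*\cap\h_0$ might be empty.

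The paper's argument proceeds differently. It first writes $g[y]^*$ as the union of $g\h_0^*$ (already inside $[x]$, so nothing to do) and the sets $g[z]$ for $\sim$-classes $[z]\neq[y]$. For a vertex $gz$ in one of the latter with $gz\notin[x]$, one finds \emph{two} separating translates of $\delta$: one of the form $gg_0\delta$ (with $g_0\in G_0$), which separates $gz$ from $g[y]\supseteq\h_0^*$ inside $g\h_0$, and one of the form $g_1\delta$ (with $g_1\in G_0$), which separates $gz$ from $[x]$ inside $\h_0$. A short path-tracking argument using the finite connecting subgraph $F$ shows that any path from $gz$ to $[x]$ must pass within distance $L$ of $gg_0\delta$. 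One-endedness of $X$ then forces $gz$ to lie in a bounded complementary component of the $L$-neighborhood of $gg_0\delta$ (since $[x]\supseteq g\h_0^*$ is unbounded and lies in the unique unbounded component), giving a uniform bound $d(gz,gg_0\delta)<K$ and hence $d(gz,[x])<K+L$. Your proposal is missing the second separating translate $g_1\delta$ and, more fundamentally, the use of one-endedness of $X$ as the source of the bound; it is not true that each $g[z]$ is a priori bounded or near $g\hh_0$, and the conclusion ``near $[x]$'' cannot be upgraded to ``near $g\hh_0$.''
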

\begin{proof}
	Note that $g\h_0^*\subsetneq\h_0$ since $G$ acts on $T$ without edge inversions.
	The existence of $[x]$ with $gV\h_0^*\subseteq[x]$ then follows immediately from Lemma \ref{lem:hin[x]}.
	Meanwhile, $g\h_0^*\subsetneq\h_0$ implies $\h_0^*\subsetneq g\h_0$ by taking complementary halfspaces on both sides.
	Hence $g^{-1}\h_0^*\subsetneq\h_0$, and there exists $[y]$ with $g^{-1}V\h_0^*\subset[y]$ by Lemma \ref{lem:hin[x]}.
	Applying $g$ to both sides yields the desired inclusion $V\h_0^*\subseteq g[y]$.
	
	It remains to prove the second part of the lemma.
	By definition, $[y]^*$ is the union of $V\h_0^*-V\h_0$ and the sets $[z]$ for $\sim$-classes $[z]\neq[y]$.
	Hence $g[y]^*$ is contained in the union of $g\h_0^*$ and $g[z]$ for $\sim$-classes $[z]\neq[y]$. We already know that $gV\h_0^*\subseteq[x]$, so it remains to show that the sets $g[z]$ for $[z]\neq[y]$ are contained in a bounded neighborhood of $[x]$.
	
	Let $z\in V\h_0$ with $[z]\neq[y]$.
	Suppose $gz\notin[x]$.	
	Suppose $g_0\delta$ separates $[z]$ from $[y]$ in $\h_0$, with $g_0\in G_0$.
	So $gg_0\delta$ separates $gz$ from $g[y]$ in $g\h_0$.
	Since $V\h_0^*\subseteq g[y]$, we have that $gz\in\h_0$ and that $gg_0\delta$ separates $gz$ from $\h_0^*$ in $g\h_0$ (see the left-hand picture in Figure \ref{fig:gz}).
	On the other hand, as $gz\notin[x]$, there is some $g_1\in G_0$ such that $g_1\delta$ separates $gz$ from $[x]$ in $\h_0$ (see the right-hand picture in Figure \ref{fig:gz}). 
	And as $gV\h_0^*\subseteq[x]$, we know that $g_1\delta$ also separates $gz$ from $g\h_0^*$ in $\h_0$.
	
	The subgraph of $X$ spanned by vertices that lie in $\h_0-\hh_0$ and not in halfspaces $\h\subsetneq\h_0$ is the $p$-preimage of a vertex in $T$, so it is a connected subgraph (see Definition \ref{defn:treeofspaces}). 
	By construction of $(\h_0,C)$ at the end of Subsection \ref{subsec:chopping}, each edge in $\delta$ is contained in $\h_0$ but not in any of the halfspaces $\h\subsetneq\h_0$ (although it might have one endpoint in $\h$).
	As a result, there exists a finite connected subgraph $F\subseteq\h_0$ that contains $\delta$ and such that $F\subseteq\h^*$ for any halfspace $\h\subsetneq\h_0$.
	Let $L$ be the diameter of $F$.
	Note that $F$ and $L$ do not depend on the choice of $z$.
	
	Let $\gamma$ be a path in $X$ from $gz$ to $[x]$.
	We claim that $\gamma$ intersects the $L$-neighborhood of $gg_0\delta$.
	\begin{enumerate}
		\item\label{gammacaseA} Suppose $\gamma$ intersects $\h_0^*\cup g\h_0^*$.
			Let $z'$ be the first point of $\gamma$ that lies in $\h_0^*\cup g\h_0^*$.
			So the initial segment $\gamma'\subseteq\gamma$ from $gz$ to $z'$ lies in $\h_0\cap g\h_0$.
			As $gg_0\delta$ separates $gz$ from $\h_0^*$ in $g\h_0$, and $g_1\delta$ separates $gz$ from $g\h_0^*$ in $\h_0$, it must be that $\gamma'$ intersects either $gg_0\delta$ or $g_1\delta$.
			If $\gamma'$ intersects $gg_0\delta$ then the claim is proved, so suppose $\gamma'$ intersects $g_1\delta$ but not $gg_0\delta$.
			
			Observe that $\gamma'\cap g_1\delta\subseteq g_1 F$.
			By Lemma \ref{lem:caphhunbdd}, $g_1\delta$, and hence $g_1 F$, intersects $\hh_0$.
			By definition, $F\subseteq\h^*$ for any halfspace $\h\subsetneq\h_0$, so $g_1F$ has the same property; in particular $g_1F\subseteq g\h_0$.
			As $gg_0\delta$ separates $gz$ from $\h_0^*$ in $g\h_0$, it must also separate $\gamma'\cap g_1\delta$ from $\h_0^*$ in $g\h_0$.
			Since $g_1F$ is connected and lies in $g\h_0$, we deduce that $gg_0\delta$ separates $\gamma'\cap g_1\delta$ from $\h_0^*$ in $g_1F$.
			As $L$ is the diameter of $F$, and hence $g_1F$, we see that $\gamma'\cap g_1\delta$ lies in the $L$-neighborhood of $gg_0\delta$, as required.
			
		\item Now suppose $\gamma$ does not intersect $\h_0^*\cup g\h_0^*$.
		Then $\gamma$ is contained in $\h_0\cap g\h_0$.
		As $g_1\delta$ separates $gz$ from $[x]$ in $\h_0$, it must be that $\gamma$ intersects $g_1\delta$.
		If $\gamma$ also intersects $gg_0\delta$ then the claim is proved, so suppose $\gamma$ intersects $g_1\delta$ but not $gg_0\delta$.
		We can then argue as in the second half of part \ref{gammacaseA} (with $\gamma$ in place of $\gamma'$) to conclude that $\gamma$ intersects the $L$-neighborhood of $gg_0\delta$. This completes the proof of the claim.
	\end{enumerate}
	
	We have shown that either $d(gz,gg_0\delta)\leq L$ or the $L$-neighborhood of $gg_0\delta$ separates $gz$ from $[x]$ in $X$. Suppose for the moment we are in the latter case.
	As $X$ is one-ended, the $L$-neighborhood of $\delta$ only has one unbounded complementary component.
	Since $X$ is locally finite, there are only finitely many bounded complementary components, so there is a constant $K\geq L$ such that the bounded complementary components are contained in the $K$-neighborhood of $\delta$.
	Since $gV\h_0^*\subseteq[x]$, we know that $[x]$ is unbounded, so $gz$ is in a bounded complementary component of the $L$-neighborhood of $gg_0\delta$, and $d(gz,gg_0\delta)\leq K$.
	Thus in either case we get $d(gz,gg_0\delta)\leq K$.
	Now $gg_0 F$ contains $gg_0\delta$ and intersects $g\h_0^*$, and $gg_0F$ has diameter $L$, so $d(gz,g\h_0^*)\leq K+L$. In turn this implies $d(gz,[x])\leq K+L$.
	As $K$ and $L$ do not depend on the choice of $z$, this implies that all the sets $g[z]$ with $z\in V\h_0$ and $[z]\neq[y]$ are contained in a bounded neighborhood of $[x]$, as required.
\end{proof}

\begin{figure}[H]

	\begin{tikzpicture}[auto,node distance=2cm,
		thick,every node/.style={},
		every loop/.style={min distance=2cm},
		hull/.style={draw=none},
		scale=.55
		]
		\tikzstyle{label}=[draw=none]

		%\draw[rounded corners=20pt] (-4,6)--(-4,1)--(-6,1);
		\draw[rounded corners=20pt] (-6,4)--(-4,4)--(-4,1)--(-6,1);
		\draw[rounded corners=20pt] (1,6)--(1,1)--(6,1);
		\draw[rounded corners=20pt] (2.5,6)--(2.5,2)--(6,2);
		%\draw[rounded corners=20pt] (-5.5,6)--(-5.5,2)--(-6,2);
		
		\draw[fill=black!30!green,opacity=.2](-6,-1)--(-6,1){[rounded corners=20]--(-4,1)}--(-4,6)--(1,6){[rounded corners=20]--(1,1)}--(6,1)--(6,-1)--(-6,-1);
		\draw[fill=black,opacity=.2](-6,6)--(-4,6)--(-4,2.5){[rounded corners=20]--(-4,4)}--(-6,4)--(-6,6);
		\draw[fill=black,opacity=.2](2.5,6){[rounded corners=20]--(2.5,2)}--(6,2)--(6,6)--(2.5,6);
		\draw[fill=black,opacity=.2](-6,6)--(6,6)--(6,7)--(-6,7)--(-6,6);
		
		\draw[blue,line width=1pt] (1,6) rectangle (2.5,4.5);
		\draw[fill=blue,opacity=.5] (1,6) rectangle (2.5,4.5);
		\node[label,blue] at (3.3,5.1){$gg_0\delta$};
		
		%\draw[blue,line width=1pt] (-4,6) rectangle (-5.5,4.5);
		%\draw[fill=blue,opacity=.5] (-4,6) rectangle (-5.5,4.5);

		\draw[red,line width=1pt] (-6,0)--(6,0);
		\draw[draw=red,fill=black,-triangle 90, ultra thick](-3,0)--(-3,1);			
		\node[label,red] at (-2.2,.8){$\h_0$};
		
		\draw[red,line width=1pt] (-6,6)--(6,6);
		\draw[draw=red,fill=black,-triangle 90, ultra thick](-3,6)--(-3,5);			
		\node[label,red] at (-2.1,5){$g\h_0$};
		
		\node[circle,draw,fill,inner sep=0pt,minimum size=5pt] at (4,3){};
		
		\node[label,black!30!green] at (-1,3){$g[y]$};
		\node[label] at (4.7,3){$gz$};
		
	\end{tikzpicture}
\hspace{1cm}
	\begin{tikzpicture}[auto,node distance=2cm,
	thick,every node/.style={},
	every loop/.style={min distance=2cm},
	hull/.style={draw=none},
	scale=.55
	]
	\tikzstyle{label}=[draw=none]

	%\draw[rounded corners=20pt] (-4,6)--(-4,1)--(-6,1);
	\draw[rounded corners=20pt] (-6,5)--(-4,5)--(-4,2)--(-6,2);
	\draw[rounded corners=20pt] (1,0)--(1,5)--(6,5);
	\draw[rounded corners=20pt] (2.5,0)--(2.5,4)--(6,4);
	%\draw[rounded corners=20pt] (-5.5,6)--(-5.5,2)--(-6,2);

	\draw[fill=black!30!green,opacity=.2](-6,7)--(-6,5){[rounded corners=20]--(-4,5)}--(-4,0)--(1,0){[rounded corners=20]--(1,5)}--(6,5)--(6,7)--(-6,7);
	\draw[fill=black,opacity=.2](-6,0)--(-4,0)--(-4,3.5){[rounded corners=20]--(-4,2)}--(-6,2)--(-6,0);
	\draw[fill=black,opacity=.2](2.5,0){[rounded corners=20]--(2.5,4)}--(6,4)--(6,0)--(2.5,0);
	\draw[fill=black,opacity=.2](-6,0)--(6,0)--(6,-1)--(-6,-1)--(-6,0);
	
	\draw[blue,line width=1pt] (1,0) rectangle (2.5,1.5);
	\draw[fill=blue,opacity=.5] (1,0) rectangle (2.5,1.5);
	\node[label,blue] at (1.7,2){$g_1\delta$};
	
	%\draw[blue,line width=1pt] (-4,6) rectangle (-5.5,4.5);
	%\draw[fill=blue,opacity=.5] (-4,6) rectangle (-5.5,4.5);

	\draw[red,line width=1pt] (-6,0)--(6,0);
	\draw[draw=red,fill=black,-triangle 90, ultra thick](-3,0)--(-3,1);			
	\node[label,red] at (-2.2,.8){$\h_0$};
	
	\draw[red,line width=1pt] (-6,6)--(6,6);
	\draw[draw=red,fill=black,-triangle 90, ultra thick](-3,6)--(-3,5);			
	\node[label,red] at (-2.1,5){$g\h_0$};
	
	\tikzstyle{a}=[isosceles triangle,fill,sloped,allow upside down,minimum size=5pt,inner sep=0pt,rotate=180,draw]
	
	\draw[rounded corners=20pt] (4,3)--(4,1)--(-1,1);
	\node[a] at (1.5,1){};
	\node[circle,draw,fill,inner sep=0pt,minimum size=5pt] at (-1,1){};
	%\node[circle,draw,fill,inner sep=0pt,minimum size=5pt] at (2.5,1){};
	
	\node[circle,draw,fill,inner sep=0pt,minimum size=5pt] at (4,3){};
	
	\node[label,black!30!green] at (-1,3){$[x]$};
	\node[label] at (4.7,3){$gz$};
	%\node[label] at (2.9,.6){$z'$};
	\node[label] at (4.3,1.5){$\gamma$};
	
\end{tikzpicture}

	\caption{Cartoon picture for the proof of Lemma \ref{lem:[y]*preceq}.
	The two pictures appear mutually contradictory, and indeed the proof shows that this situation can only happen when $gz$ is uniformly close to both $gg_0\delta$ and $g_1\delta$, so in reality the picture would look more like Figure \ref{fig:gzreality}.}\label{fig:gz}
\end{figure}

\begin{figure}[H]

	\begin{tikzpicture}[auto,node distance=2cm,
		thick,every node/.style={},
		every loop/.style={min distance=2cm},
		hull/.style={draw=none},
		scale=.55
		]
		\tikzstyle{label}=[draw=none]

		\draw[rounded corners=20pt] (-6,5)--(-4,5)--(-4,2)--(-6,2);
		\draw[rounded corners=20pt] (1,3)--(1,5)--(6,5);
		\draw[rounded corners=20pt] (1.5,3)--(3,3)--(3,2);
		\draw[rounded corners=20pt] (3,1)--(6,1);

		\draw[fill=black!30!green,opacity=.2](-6,7)--(-6,5){[rounded corners=20]--(-4,5)}--(-4,0)--(-2,0)--(-2,3)--(-1,3)--(-1,6)--(0,6)--(0,3)--(1,3){[rounded corners=20]--(1,5)}--(6,5)--(6,7)--(-6,7);
		\draw[fill=black,opacity=.2](-6,0)--(-4,0)--(-4,3.5){[rounded corners=20]--(-4,2)}--(-6,2)--(-6,0);
		\draw[fill=black,opacity=.2](1.5,3){[rounded corners=20]--(3,3)}--(3,2)--(1.5,2)--(1.5,3);
		\draw[fill=black,opacity=.2](-6,0)--(1.5,0)--(1.5,1)--(6,1)--(6,-1)--(-6,-1)--(-6,0);
		
		\draw[blue,line width=1pt] (-2,0) rectangle (1.5,3);
		\draw[fill=blue,opacity=.5] (-2,0) rectangle (1.5,3);
		\node[label,blue] at (-2.6,2){$g_1\delta$};
		
		\draw[blue,line width=1pt](-1,6)--(0,6)--(0,2)--(3,2)--(3,1)--(-1,1)--(-1,6);
		\draw[fill=blue,opacity=.5](-1,6)--(0,6)--(0,2)--(3,2)--(3,1)--(-1,1)--(-1,6);
		\node[label,blue] at (.8,5.1){$gg_0\delta$};
		
		%\draw[blue,line width=1pt] (-4,6) rectangle (-5.5,4.5);
		%\draw[fill=blue,opacity=.5] (-4,6) rectangle (-5.5,4.5);

		\draw[red,line width=1pt] (-6,0)--(6,0);
		\draw[draw=red,fill=black,-triangle 90, ultra thick](-3,0)--(-3,1);			
		\node[label,red] at (-3.6,1){$\h_0$};
		
		\draw[red,line width=1pt] (-6,6)--(6,6);
		\draw[draw=red,fill=black,-triangle 90, ultra thick](-3,6)--(-3,5);			
		\node[label,red] at (-2.1,5){$g\h_0$};
		
		\node[circle,draw,fill,inner sep=0pt,minimum size=5pt] at (2.4,2.5){};
		
		\node[label,black!30!green] at (-3,3.5){$[x]$};
		\node[label] at (3,3){$gz$};
		
	\end{tikzpicture}
	
	\caption{Cartoon picture for the proof of Lemma \ref{lem:[y]*preceq}.
		A depiction of how the situation might actually look.}\label{fig:gzreality}
\end{figure}

\subsection{The pocset $\cP$}\label{subsec:pocsetP}

We now build a pocset $\cP$ using the halfspaces of $X$ and the $\sim$-classes.
The partial order on $\cP$ will be based on \emph{coarse inclusion} of subsets of $X$.
Specifically, for subsets $\fa,\fb\subseteq X$, write $\fa\preceq\fb$ if $\fa$ is contained in a bounded neighborhood of $\fb$.
Note that $\fa\subseteq\fb$ implies $\fa\preceq\fb$, and $\fa\preceq\fb\preceq\fc$ implies $\fa\preceq\fc$.
By convention, $\emptyset\preceq\fb$ for any $\fb$.

Let
$$\cH^\#=\{\h\in\cH(X)\mid\hh\notin G\cdot\hh_0\},$$
and let $\cM_0$ denote the set of $\sim$-classes in $V\h_0$ that are not contained in a bounded neighborhood of $\hh_0$.
Note that the $\sim$-class $[x]$ appearing in Lemma \ref{lem:hin[x]} is in $\cM_0$, because the halfspace $V\h\subseteq[x]$ is not contained in a bounded neighborhood of $\hh_0$.
The same goes for the $\sim$-classes $[x]$ and $[y]$ appearing in Lemma \ref{lem:[y]*preceq}. 
We also get the following lemma.

\begin{lemma}\label{lem:[x]npreceq[x]*}
	We cannot have $[x]\preceq[x]^*$ for $[x]\in\cM_0$.
\end{lemma}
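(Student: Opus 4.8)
\textbf{Proof proposal for Lemma \ref{lem:[x]npreceq[x]*}.}

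The plan is to argue by contradiction: suppose $[x]\in\cM_0$ and $[x]\preceq[x]^*$, say $[x]$ is contained in the $R$-neighborhood of $[x]^*$ for some $R$. The intuition is that if $[x]$ is ``thin'' in this coarse sense then it cannot absorb the genuinely deep region of the halfspace $\h_0$ that, by definition of $\cM_0$, it is supposed to contain. First I would recall that since $[x]\in\cM_0$, the class $[x]$ is not contained in any bounded neighborhood of $\hh_0$; so $[x]$ reaches arbitrarily far into $\h_0$. On the other hand, every vertex of $\h_0$ either lies in a halfspace $\h\subsetneq\h_0$ (and such a halfspace is entirely inside one $\sim$-class by Lemma \ref{lem:hin[x]}), or lies in the ``central'' connected piece $p^{-1}(v_0)\subseteq\h_0-\hh_0$ together with $\hh_0$ itself, where $v_0$ is the vertex of $T$ in $p(\h_0)$ incident to $e_0$. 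The central piece is contained in a bounded neighborhood of $\hh_0$ (as noted before Lemma \ref{lem:hin[x]}, using minimality of the $G$-action), so up to enlarging $R$ we may assume $[x]^*\cap V\h_0$ is covered by $\hh_0$'s bounded neighborhood together with some halfspaces $\h\subsetneq\h_0$.

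The heart of the argument is then a separation/counting step. Because $[x]$ goes arbitrarily deep into $\h_0$ while every nearby point of $[x]^*$ inside $\h_0$ is either near $\hh_0$ or in a halfspace $\h\subsetneq\h_0$, I would fix a vertex $w\in[x]$ with $d(w,\hh_0)$ very large — much larger than $R+L$ where $L$ is the constant from Subsection \ref{subsec:chopping} (the diameter bound on $F$). Since $w$ is within $R$ of $[x]^*$, there is a vertex $w'\in[x]^*$ with $d(w,w')\le R$, and by the above $w'$ lies in some halfspace $\h\subsetneq\h_0$ with $\h\subseteq[x']$, $[x']\ne[x]$. Thus some $g_0\delta$ with $g_0\in G_0$ separates $[x]$ from $[x']$, hence separates $w$ from $w'$ in $\h_0$, so the short path from $w$ to $w'$ crosses $g_0\delta$, giving $d(w,g_0\delta)\le R$. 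But $g_0F$ is a connected set of diameter $\le L$ containing $g_0\delta$ and meeting $\hh_0$ (every translate $g_0\delta$ is separated from $\hh_0$... more precisely $g_0F$ joins $g_0\delta$ to within the central region), so $d(g_0\delta,\hh_0)$ is controlled and we conclude $d(w,\hh_0)\le R+L+(\text{const})$, contradicting the choice of $w$ deep in $\h_0$. The cleanest way to package ``$g_0F$ reaches near $\hh_0$'' is: $F$ contains $\delta$ and avoids halfspaces $\h\subsetneq\h_0$, and since $\delta$ cuts $\hh_0$ (Lemma \ref{lem:caphhunbdd}), a path in $F$ of length $\le L$ joins $\delta$ to $\hh_0$; translating by $g_0$ gives $d(g_0\delta,g_0\hh_0)\le L$, and one must then relate $g_0\hh_0$ to $\hh_0$ — here note $g_0\in G_0=G_{\h_0}$ stabilizes $\h_0$ but need not stabilize $\hh_0$, so instead I would avoid this by observing directly that $g_0\delta\subseteq\h_0$ and that a path in $g_0F$ of length $\le L$ joins $w$-side to the complement region, placing $w$ within $R+L$ of the boundary structure; if necessary I would mirror the neighborhood-separation dichotomy used in the proof of Lemma \ref{lem:[y]*preceq} (finitely many bounded complementary components of the $L$-neighborhood of $\delta$, absorbed into a $K$-neighborhood) to get the final bound $d(w,\hh_0)<K+R+L$.

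The main obstacle I anticipate is precisely the last point: making rigorous that a point of $[x]$ which is coarsely close to $[x]^*$ must actually be coarsely close to $\hh_0$, i.e., controlling how the translated cuts $g_0\delta$ sit relative to $\hh_0$. The subtlety is that $G_0$ fixes $\h_0$ as a halfspace but moves the wall $\hh_0$, so ``$g_0\delta$ is near $\hh_0$'' is not immediate and has to be extracted from one-endedness of $X$ and local finiteness, exactly as in Lemma \ref{lem:[y]*preceq}. I would therefore model this step on that proof: use that the $L$-neighborhood of $\delta$ has a unique unbounded complementary component in $X$, so any vertex separated from the unbounded part of $[x]$ by such a neighborhood lies in a bounded piece, hence within a uniform $K$ of $\delta\subseteq\hh_0$'s vicinity. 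Everything else — the case analysis on where $w'$ lives, and the use of Lemma \ref{lem:hin[x]} to pin down halfspaces inside single $\sim$-classes — is routine and parallel to earlier lemmas in this subsection.
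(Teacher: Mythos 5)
Your overall strategy — fix a very deep vertex $w\in[x]$, use $[x]\preceq[x]^*$ to produce a nearby $w'\in[x]^*$, and then force the short path from $w$ to $w'$ to cross a bounded neighborhood of $\hh_0$ — is exactly what the paper does, but the execution contains two errors, and the ``main obstacle'' you anticipate is not an obstacle at all.

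First, the claimed obstacle rests on a false premise. You write that ``$G_0$ fixes $\h_0$ as a halfspace but moves the wall $\hh_0$,'' and spend the final paragraph trying to work around this. In fact $G_0=G_{\h_0}=G_{e_0}$ does stabilize $\hh_0$: the wall is $\hh_0=p^{-1}(\hat{e}_0)$, and since $G$ acts on $T$ without edge inversions, stabilizing the halfspace $p(\h_0)$ is the same as stabilizing the edge $e_0$, hence its midpoint, hence $\hh_0$. Combined with Lemma \ref{lem:caphhunbdd} (which gives $\delta\cap\hh_0\neq\emptyset$) and finiteness of $\delta$, this makes ``every $G_0$-translate of $\delta$ lies within $\diam(\delta)$ of $\hh_0$'' completely immediate; no one-endedness, local finiteness, or $K$/$L$-neighborhood dichotomy \`a la Lemma \ref{lem:[y]*preceq} is needed.

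Second, the assertion that the central piece $p^{-1}(v_0)$ is contained in a bounded neighborhood of $\hh_0$ ``as noted before Lemma \ref{lem:hin[x]}'' is not in the paper and is false in general: $p^{-1}(v_0)$ is a cocompact $G_{v_0}$-set while $\hh_0$ is a cocompact $G_{e_0}$-set, and there is no reason for the former to be coarsely contained in the latter (think of an edge group much smaller than a vertex group). Luckily this step is also unnecessary: you do not need to know where $w'$ sits. All you need is that any path from $[x]$ to $[x]^*$ either leaves $\h_0$, and hence crosses $\hh_0$, or stays in $\h_0$ and crosses some $g_0\delta$ with $g_0\in G_0$ (by definition of $\sim$); either way it meets a $\diam(\delta)$-neighborhood of $\hh_0$. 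Since $[x]\in\cM_0$ has points arbitrarily far from $\hh_0$, the relation $[x]\preceq[x]^*$ is impossible. Once you strip away the unwarranted worry about $g_0\hh_0\neq\hh_0$ and the unneeded claim about $p^{-1}(v_0)$, your argument collapses to the paper's two-sentence proof.
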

\begin{proof}
Any path from $[x]$ to $[x]^*$ must either pass through $\hh_0$ or through a $G_0$-translate of $\delta$, which is contained in a bounded neighborhood of $\hh_0$.
The lemma follows since $[x]$ is not contained in a bounded neighborhood of $\hh_0$ (which is by definition of $\cM_0$).
\end{proof}

Now define $\cP$ to be the set of all pairs $(\fa,\h)$, with $\fa\subseteq VX$ and $\h\in\cH(X)$, that arise in one of the following three ways:
\begin{equation}
	(\fa,\h)=
	\begin{cases}
		(V\h,\h),              & \h\in\cH^\#,\\
		(g[x],g\h_0),&g\in G,\,[x]\in\cM_0,\\
		(g[x]^*,g\h_0^*),&g\in G,\,[x]\in\cM_0,	
	\end{cases}
\end{equation}

Define a relation $\leq$ on $\cP$, where $(\fa_1,\h_1)\leq(\fa_2,\h_2)$ if
\begin{enumerate}
	\item\label{item:prec} $\fa_1\prec\fa_2$ (meaning $\fa_1\preceq\fa_2$ but $\fa_2\npreceq\fa_1$), or
	\item\label{item:preceq} $\fa_1\preceq\fa_2$ and $\h_1\subseteq\h_2$.
\end{enumerate} 
\begin{remark}
	It can happen that conditions \ref{item:prec} and \ref{item:preceq} both hold.
\end{remark}

\begin{lemma}\label{lem:[x]leq[y]}
	For distinct $[x],[y]\in\cM_0$, we have
	$([x],\h_0)\leq([y]^*,\h_0^*)$, but we do not have $([x],\h_0)\leq([y],\h_0)$ or $([x]^*,\h_0^*)\leq([y]^*,\h_0^*)$.
\end{lemma}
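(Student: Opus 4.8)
The plan is to exploit the structural facts already established about the $\sim$-classes in $\cM_0$, especially Lemma \ref{lem:[y]*preceq} (applied with $g=1$) and Lemma \ref{lem:[x]npreceq[x]*}. First I would observe that since $[x]$ and $[y]$ are distinct $\sim$-classes, they are disjoint as subsets of $V\h_0$, so $[x]\subseteq[y]^*=VX-[y]$, hence $[x]\preceq[y]^*$; combined with $\h_0\subseteq\h_0^*$? — no, rather I note $\h_0$ and $\h_0^*$ are not nested, so I cannot use clause \ref{item:preceq}. Instead I must get $[x]\prec[y]^*$ strictly, i.e. $[x]\preceq[y]^*$ together with $[y]^*\npreceq[x]$. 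The first half is the disjointness observation just made. For the second half, suppose toward a contradiction that $[y]^*\preceq[x]$. Since $\h_0^*\subseteq[y]^*$ (this is part of the setup of $\cM_0$-classes — $\h_0^*$ is contained in some $\sim$-class, but actually I need to be careful: $\h_0^*$ is \emph{not} a subset of $V\h_0$; here $[y]$ is a $\sim$-class in $V\h_0$ and $[y]^* = VX - [y]$ does contain $\h_0^* = VX - \h_0$). So $\h_0^*\subseteq[y]^*\preceq[x]$, giving $\h_0^*\preceq[x]$. But $[x]\subseteq V\h_0$, and $\h_0$ together with $\h_0^*$ meet only in $\hh_0$, so $\h_0^*\preceq[x]\subseteq V\h_0$ would force $\h_0^*$ into a bounded neighborhood of $\hh_0$, contradicting Lemma \ref{lem:hdeep}. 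This establishes $([x],\h_0)\leq([y]^*,\h_0^*)$ via clause \ref{item:prec}.

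Next, for the negative statements. To rule out $([x],\h_0)\leq([y],\h_0)$: clause \ref{item:preceq} would require $[x]\preceq[y]$, and clause \ref{item:prec} would require $[x]\prec[y]$, so in either case I need $[x]\preceq[y]$. I claim this fails. Since $[x]$ and $[y]$ are distinct $\sim$-classes in $V\h_0$, there is some $D\in\cP_0$, say $D = g\delta$-bounded (i.e. $D=gC$ or $V\h_0-gC$), with $[x]\subseteq D$ and $[y]\subseteq V\h_0 - D$ (or vice versa — the asymmetry needs handling). The point is that any path in $X$ from $[x]$ to $[y]$ must cross the separating set, and since $[x]\in\cM_0$ it is not contained in a bounded neighborhood of $\hh_0$, while $[y]$ is "on the other side". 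I would argue more carefully: $[x]\preceq[y]$ would mean $[x]$ lies in a bounded $R$-neighborhood of $[y]$; but $[x]$, being unbounded and $\sim$-closed, contains points arbitrarily far from every $G_0$-translate of $\delta$, and such points cannot be within bounded distance of $[y]$ without a path crossing into $[y]$ without crossing any $g\delta$ — contradicting that $[x]\neq[y]$. The cleanest formulation: if $[x]\preceq[y]$ then $[y]\neq\emptyset$ lies "between" $[x]$ and infinity, but $[x]$ is not coarsely separated from the bulk of $\h_0-\hh_0$... I would instead use the tree $T_0$: $\lambda:V\h_0\to VT_0$ sends $[x],[y]$ to distinct vertices $u\ne w$, and the combinatorics of $T_0$ plus properness shows $[x]$ is not in a bounded neighborhood of $[y]$ (e.g. there is a $g\delta$ strictly separating a sub-halfspace $\h\subseteq[x]$ from $[y]$, and $\h$ is unbounded by choice). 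Symmetrically, to rule out $([x]^*,\h_0^*)\leq([y]^*,\h_0^*)$, I need to rule out $[x]^*\preceq[y]^*$; this is equivalent (by complementing neighborhoods — being careful, coarse containment of complements reverses) to saying $[y]\npreceq[x]$, which is the same type of statement with $x,y$ swapped, so the same argument applies.

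The main obstacle I anticipate is the negative statement $[x]\npreceq[y]$ — confirming that two distinct, unbounded $\sim$-classes are \emph{coarsely incomparable}, not merely distinct. The subtlety is that coarse containment only sees bounded neighborhoods, so I need to genuinely use that each class in $\cM_0$ is "deep" (unbounded, and not coarsely inside $\hh_0$) and that the separating sets $g\delta$ are uniformly finite and coarsely contained in $\hh_0$ (by Lemma \ref{lem:caphhunbdd}), together with one-endedness of $X$. I expect the argument to run: pick a sub-halfspace $\h\subsetneq\h_0$ with $\h\subseteq[x]$ (exists by Lemma \ref{lem:hin[x]}, after translating if necessary so that some $G_0$-translate of $\delta$ separates toward $[x]$) — wait, that gives a halfspace inside $[x]$ only for the particular class containing it; in general I should take $\h$ with $\lambda(\h)$ on the $[x]$-side, note $\h$ is unbounded and $\h\cap[y]=\emptyset$, and use that $\h$ is not in a bounded neighborhood of its own bounding wall $\hh$ (Lemma \ref{lem:hdeep}) to conclude $\h$, hence $[x]$, escapes every bounded neighborhood of $[y]$. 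I would close by remarking that clauses \ref{item:prec} and \ref{item:preceq} of the definition of $\leq$ have both been checked, completing the proof.
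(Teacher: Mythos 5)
For the positive inequality $([x],\h_0)\leq([y]^*,\h_0^*)$ your argument is essentially the paper's: disjointness of $[x],[y]$ gives $[x]\subseteq[y]^*$, and you then rule out $[y]^*\preceq[x]$ by noting $\h_0^*\subseteq[y]^*$ cannot be coarsely trapped near $\hh_0$, which is how the paper obtains the strict inequality $[x]\prec[y]^*$ (type \ref{item:prec}).

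For the two negative statements, however, you have not found the argument, and your proposed route has a genuine gap. The paper's observation is one line: since $[x]$ and $[y]$ are disjoint, $[y]\subseteq[x]^*$, so $([x],\h_0)\leq([y],\h_0)$ would force $[x]\preceq[y]\subseteq[x]^*$, i.e.\ $[x]\preceq[x]^*$, directly contradicting Lemma \ref{lem:[x]npreceq[x]*}; and symmetrically $([x]^*,\h_0^*)\leq([y]^*,\h_0^*)$ would force $[y]\subseteq[x]^*\preceq[y]^*$, again contradicting Lemma \ref{lem:[x]npreceq[x]*}. You cite Lemma \ref{lem:[x]npreceq[x]*} at the outset but never actually invoke it here; instead you set off on a route via the tree $T_0$, separating translates $g\delta$, and a sub-halfspace $\h\subsetneq\h_0$ contained in $[x]$. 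That last ingredient is not available in general: Lemma \ref{lem:hin[x]} only says each sub-halfspace lies in \emph{some} $\sim$-class, not that every class in $\cM_0$ contains one. A class $[x]\in\cM_0$ can consist entirely of vertices in the vertex space $p^{-1}(v_0)$ together with parts of $\hh_0$, without meeting any $\h\subsetneq\h_0$, so the argument ``$\h\subseteq[x]$ is deep, hence $[x]$ escapes every bounded neighborhood of $[y]$'' is not grounded. Even the weaker assertion that distinct classes in $\cM_0$ are coarsely incomparable cannot be deduced purely from unboundedness and one-endedness of $X$; you would ultimately need the relation $[y]\subseteq[x]^*$ plus Lemma \ref{lem:[x]npreceq[x]*}, which is exactly the step you are missing.
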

\begin{proof}
	We know that $[x]$ and $[y]$ are disjoint, so $[x]\preceq[y]^*$. Moreover, $V\h_0^*-\hh_0\subseteq[y]^*$ is not contained in a bounded neighborhood of $[x]$, so $[x]\prec[y]^*$ and $([x],\h_0)\leq([y]^*,\h_0^*h)^*$ is an inequality of type \ref{item:prec}.
	
	Secondly, we cannot have $([x],\h_0)\leq([y],\h_0)$ as that would imply $[x]\preceq[y]\subseteq[x]^*$, contradicting Lemma \ref{lem:[x]npreceq[x]*}.
	Similarly, we cannot have $([x]^*,\h_0^*)\leq([y]^*,\h_0^*)$ as that would imply $[y]\subseteq[x]^*\preceq[y]^*$, again contradicting Lemma \ref{lem:[x]npreceq[x]*}.
\end{proof}

\begin{lemma}\label{lem:partial}
	$\leq$ is a partial order on $\cP$.
\end{lemma}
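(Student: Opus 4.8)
The plan is to verify the three defining properties of a partial order: reflexivity, antisymmetry, and transitivity. Reflexivity is immediate from case \ref{item:preceq}, since $\fa\preceq\fa$ and $\h\subseteq\h$ always hold, so $(\fa,\h)\leq(\fa,\h)$.

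For \emph{antisymmetry}, I would suppose $(\fa_1,\h_1)\leq(\fa_2,\h_2)$ and $(\fa_2,\h_2)\leq(\fa_1,\h_1)$ and show $(\fa_1,\h_1)=(\fa_2,\h_2)$. The first thing to observe is that neither inequality can be of type \ref{item:prec} (which would give $\fa_1\prec\fa_2$ and hence $\fa_2\npreceq\fa_1$, contradicting the existence of the reverse inequality in either form), so both must be of type \ref{item:preceq}. Thus $\fa_1\preceq\fa_2\preceq\fa_1$ and $\h_1\subseteq\h_2\subseteq\h_1$, giving $\h_1=\h_2$ immediately. It then remains to deduce $\fa_1=\fa_2$ from $\h_1=\h_2$ together with $\fa_1\preceq\fa_2\preceq\fa_1$. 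Here one runs through the three cases in the definition of $\cP$: if $\h_1=\h_2\in\cH^\#$ then $\fa_1=\h_1=\h_2=\fa_2$; if $\h_1=\h_2=g\h_0$ then $\fa_1=g[x]$ and $\fa_2=g[x']$ for $[x],[x']\in\cM_0$, and coarse equality $g[x]\preceq g[x']\preceq g[x]$ forces $[x]=[x']$ — for if $[x]\neq[x']$ they would be disjoint $\sim$-classes and Lemma \ref{lem:[x]npreceq[x]*} (applied after translating by $g^{-1}$) would be contradicted, since $[x]\preceq[x']\subseteq[x]^*$; the case $\h_1=\h_2=g\h_0^*$ is symmetric, using that $g[x]^*\preceq g[x']^*$ forces $[x']\subseteq[x]^*\preceq[x']^*$, again contradicting Lemma \ref{lem:[x]npreceq[x]*} unless $[x]=[x']$. (One should also check these three cases are mutually exclusive for a fixed second coordinate, i.e. that a given halfspace $\h$ cannot simultaneously be in $\cH^\#$ and equal to some $g\h_0$ or $g\h_0^*$ — this holds because $\h\in\cH^\#$ means $\hh\notin G\cdot\hh_0$, whereas $\h=g\h_0^{(*)}$ gives $\hh=g\hh_0$.)

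For \emph{transitivity}, I would suppose $(\fa_1,\h_1)\leq(\fa_2,\h_2)\leq(\fa_3,\h_3)$ and split into cases according to the types of the two inequalities. In all cases $\fa_1\preceq\fa_2\preceq\fa_3$, hence $\fa_1\preceq\fa_3$. If either inequality is of type \ref{item:prec}, I want to conclude $\fa_1\prec\fa_3$, i.e. $\fa_3\npreceq\fa_1$: if instead $\fa_3\preceq\fa_1$ then combined with the chain $\fa_1\preceq\fa_2\preceq\fa_3$ we would get $\fa_1,\fa_2,\fa_3$ all coarsely equal, contradicting whichever strict inequality $\fa_i\prec\fa_{i+1}$ we had. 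So in this case $(\fa_1,\h_1)\leq(\fa_3,\h_3)$ via \ref{item:prec}. The remaining case is when both inequalities are of type \ref{item:preceq}, so $\h_1\subseteq\h_2\subseteq\h_3$ and hence $\h_1\subseteq\h_3$, giving $(\fa_1,\h_1)\leq(\fa_3,\h_3)$ via \ref{item:preceq}.

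The main obstacle is the antisymmetry step, specifically the deduction that coarse equality of the first coordinates $\fa_1\preceq\fa_2\preceq\fa_1$ upgrades to genuine equality. This is where the geometry enters — it is not formal, and relies crucially on Lemma \ref{lem:[x]npreceq[x]*} (itself a consequence of $\cM_0$ consisting of $\sim$-classes not in a bounded neighborhood of $\hh_0$), together with the fact that distinct $\sim$-classes are genuinely disjoint. Transitivity and reflexivity, by contrast, are essentially bookkeeping on the two-case definition of $\leq$ and the elementary properties $\fa\subseteq\fb\Rightarrow\fa\preceq\fb$ and $\preceq$-transitivity recorded just before the definition of $\cP$.
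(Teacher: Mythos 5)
Your proof is correct and follows essentially the same structure as the paper's: reflexivity from case (2), antisymmetry by noting both inequalities must satisfy (2) so $\h_1=\h_2$, and then a case analysis on the form of the common halfspace; transitivity by tracking the two types. The only cosmetic difference is that where you inline the argument that coarse equality $g[x]\preceq g[x']\preceq g[x]$ forces $[x]=[x']$ (via Lemma~\ref{lem:[x]npreceq[x]*}), the paper instead cites Lemma~\ref{lem:[x]leq[y]}, which packages exactly that argument.
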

\begin{proof}
	Reflexivity holds by \ref{item:preceq}. Let's now prove antisymmetry. If $(\fa_1,\h_1)\leq(\fa_2,\h_2)\leq(\fa_1,\h_1)$ then $\fa_1\preceq\fa_2\preceq\fa_1$, so both $\leq$ inequalities must be of type \ref{item:preceq}, which means that $\h_1\subseteq\h_2\subseteq\h_1$, and so $\h_1=\h_2$.
	If $\h_1\in\cH^\#$ then $(\fa_1,\h_1)=(\fa_2,\h_2)=(V\h_1,\h_1)$.
	If $\h_1=g\h_0$ (some $g\in G$), then $(\fa_1,\h_1)=(g[x],g\h_0)$ and $(\fa_2,\h_2)=(g[y],g\h_0)$ for some $[x],[y]\in\cM_0$, and $[x]=[y]$ by Lemma \ref{lem:[x]leq[y]}.
	Similarly, if $\h_1=g\h_0^*$ (some $g\in G$), then $(\fa_1,\h_1)=(g[x]^*,g\h_0^*)$ and $(\fa_2,\h_2)=(g[y]^*,g\h_0^*)$ for some $[x],[y]\in\cM_0$, and $[x]=[y]$ by Lemma \ref{lem:[x]leq[y]}.
	
	Finally, we prove transitivity. Suppose $(\fa_1,\h_1)\leq(\fa_2,\h_2)\leq(\fa_3,\h_3)$. If one of the inequalities is of type \ref{item:prec}, then $\fa_1\prec\fa_3$ and so $(\fa_1,\h_1)\leq(\fa_3,\h_3)$ is an inequality of type \ref{item:prec}. On the other hand, if both inequalities are of type \ref{item:preceq}, then $(\fa_1,\h_1)\leq(\fa_3,\h_3)$ is also an inequality of type \ref{item:preceq}.
\end{proof}

Define an involution $(\fa,\h)\mapsto(\fa,\h)^*$ on $\cP$ by 
\begin{align*}
	(V\h,\h)&\mapsto(V\h^*,\h^*),              & \h\in\cH^\#,\\
	(g[x],g\h_0)&\mapsto(g[x]^*,g\h_0^*),&g\in G,\,[x]\in\cM_0,\\
	(g[x]^*,g\h_0^*)&\mapsto(g[x],g\h_0),&g\in G,\,[x]\in\cM_0.		
\end{align*}
We have an action of $G$ on $\cP$ given by $g\cdot(\fa,\h):=(g\fa,g\h)$, which clearly respects $\leq$ and $*$.
We will show that $(\cP,\leq,*)$ is a pocset with no pair of transverse elements. First we need the following lemma.

\begin{lemma}\label{lem:notboth}
	We cannot have both $(\fa_1,\h_1)\leq(\fa_2,\h_2)$ and $(\fa_1,\h_1)^*\leq(\fa_2,\h_2)$.
\end{lemma}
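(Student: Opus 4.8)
The plan is to argue by contradiction, assuming both $(\fa_1,\h_1)\leq(\fa_2,\h_2)$ and $(\fa_1,\h_1)^*\leq(\fa_2,\h_2)$, and to derive that $\h_2$ would then have to contain almost all of $X$, contradicting Lemma \ref{lem:hdeep} (which says $\h_2^*$ is not contained in a bounded neighborhood of $\hh_2$). First I would unpack what the two inequalities say: in each case, either we have the strict coarse inclusion $\fa_i\prec\fa_2$ of type \ref{item:prec}, or we have $\fa_i\preceq\fa_2$ together with $\h_i\subseteq\h_2$ of type \ref{item:preceq}. I would observe that in all three forms of elements of $\cP$, the underlying halfspace $\h$ of the pair $(\fa,\h)$ satisfies $\fa^* \preceq \h^*$: for $(\h,\h)$ this is trivial; for $(g[x],g\h_0)$ we have $(g[x])^* = g[x]^* \supseteq g\h_0^*$ so in fact $g\h_0^* \subseteq g[x]^*$; and similarly $(g[x]^*)^* = g[x] \supseteq$ the subgraph spanned by $g\h_0 - g\hh_0$ minus halfspaces strictly inside $g\h_0$, so again $\fa^*$ coarsely contains $\h^*$ — more precisely, in each case $\fa \cup \h^*$ is coarsely all of $\h$-side plus $\h^*$, i.e. $\fa^* \preceq \h^*$ and also $VX - \fa \preceq \h^*$ up to bounded error near the wall.

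The key step is then: since $(\fa_1,\h_1)\leq(\fa_2,\h_2)$ gives $\fa_1\preceq\fa_2$, and $(\fa_1,\h_1)^*\leq(\fa_2,\h_2)$ gives $\fa_1^*\preceq\fa_2$ (here $\fa_1^*$ denotes the second coordinate's complement-style set, i.e. the $\fa$-part of $(\fa_1,\h_1)^*$), we would have both $\fa_1\preceq\fa_2$ and a set coarsely complementary to $\fa_1$ also $\preceq\fa_2$. I need to check that $\fa_1$ together with the $\fa$-part of its involute coarsely cover $VX$ — this holds because in each of the three cases the pair $\{\fa, \fa^{\mathrm{inv}}\}$ is $\{[x],[x]^*\}$, $\{[x]^*,[x]\}$, or $\{\h,\h^*\}$ up to bounded error, and these genuinely partition $VX$. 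Hence $VX \preceq \fa_2$. But $\fa_2$ is coarsely contained in $\h_2$ — in the $(\h,\h)$ case literally $\fa_2 = \h_2$; in the $(g[x],g\h_0)$ case $g[x] \subseteq g\h_0 = \h_2$ by Lemma \ref{lem:hin[x]} applied to translates; in the $(g[x]^*,g\h_0^*)$ case I would note $g[x]^* = VX - g[x] \subseteq VX - g\h$ for the halfspace $\h\subseteq[x]$ from Lemma \ref{lem:hin[x]}, and $VX - g\h$ is coarsely $g\h_0^* = \h_2$ (up to the bounded set $F$ and the finitely many other small halfspaces). So $VX\preceq\h_2$, forcing $\h_2^*$ to be bounded, which contradicts Lemma \ref{lem:hdeep} since $X$ is one-ended so $\hh_2$, and hence $\h_2^*$, is unbounded — actually we get the stronger contradiction that $\h_2^*$ is bounded while $\hh_2 \subseteq \h_2^*$ is unbounded.

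The main obstacle I anticipate is bookkeeping the bounded-error terms carefully in the third case $(g[x]^*, g\h_0^*)$: the set $g[x]^*$ is not exactly $g\h_0^*$ — it also contains the finitely many bounded pieces of $g\h_0$ lying outside the halfspace $g\h$ and outside the deep part, plus points near $g\hh_0$ — so I must confirm these discrepancies are genuinely within a bounded neighborhood of $g\hh_0$ (using that $[x]\in\cM_0$ and Lemma \ref{lem:[x]npreceq[x]*}), hence absorbed into the $\preceq$ relation, and do not obstruct the conclusion $g[x]^* \preceq g\h_0^*$. Once that is pinned down, combining it with $\fa_1 \preceq \fa_2$ and the coarse partition property cleanly yields $VX \preceq \h_2$ and the contradiction. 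I would also double check the degenerate possibility that $\h_1$ and $\h_1^{\mathrm{inv}}$-halfspace are the same up to the involution (they always are, by construction of $*$), so there is no case where the two hypotheses are about unrelated halfspaces.
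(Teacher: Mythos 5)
Your reduction $\fa_1\preceq\fa_2$ and $\fa_1^*\preceq\fa_2$ implies $X\preceq\fa_2$ is exactly the paper's first step, and your handling of the cases $(\fa_2,\h_2)=(\h,\h)$ and $(\fa_2,\h_2)=(g[x],g\h_0)$ (where $\fa_2\subseteq\h_2$ literally, giving $\h_2^*\preceq\h_2$ against Lemma \ref{lem:hdeep}) also matches the paper. However, there is a genuine gap in your third case $(\fa_2,\h_2)=(g[x]^*,g\h_0^*)$. You assert that $\fa_2=g[x]^*$ is coarsely contained in $\h_2=g\h_0^*$, reasoning that the discrepancy consists of ``bounded pieces'' near $g\hh_0$. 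That is false in general: $g[x]^*$ contains $g[y]$ for every other $\sim$-class $[y]\neq[x]$, and whenever $[y]\in\cM_0$ the set $g[y]$ is, by definition of $\cM_0$, \emph{not} contained in any bounded neighborhood of $g\hh_0$ — equivalently, not in any bounded neighborhood of $g\h_0^*$, since points of $g\h_0$ must cross $g\hh_0$ to reach $g\h_0^*$. So as soon as $\cM_0$ has a second element (which is the generic situation; Lemma \ref{lem:[x]leq[y]} is stated precisely for distinct $[x],[y]\in\cM_0$), the coarse inclusion $g[x]^*\preceq g\h_0^*$ fails, and your derivation of $VX\preceq\h_2$ collapses. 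The same sign error already appears in your preliminary observation ``$\fa^*\preceq\h^*$'': for $(g[x],g\h_0)$ you in fact verify $\h^*\subseteq\fa^*$, which is the reverse coarse containment.

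The fix is short and is what the paper does: in the third case, from $X\preceq\fa_2$ you get $\fa_2^*\preceq\fa_2$, i.e. $g[x]\preceq g[x]^*$, which directly contradicts Lemma \ref{lem:[x]npreceq[x]*} (after translating by $g^{-1}$, noting $\preceq$ is $G$-equivariant). No claim about $\h_2$ is needed in this case. With that replacement your argument becomes correct and essentially identical to the paper's proof.
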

\begin{proof}
	Write $(\fa_1,\h_1)^*=(\fa_1^*,\h_1^*)$.
	If both inequalities in the lemma did hold, then we would have $\fa_1\preceq\fa_2$ and $\fa_1^*\preceq\fa_2$, so $X\preceq\fa_2$ and $\fa_2^*\preceq\fa_2$. 
	If $(\fa_2,\h_2)$ takes the form $(V\h,\h)$ or $(g[x],g\h_0)$, then $\fa_2\subseteq\h_2$ and we get a contradiction since $\h_2^*\npreceq\h_2$ (Lemma \ref{lem:hdeep}).
	If $(\fa_2,\h_2)$ takes the form $(g[x]^*,g\h_0^*)$, then $\fa_2^*=g[x]\preceq g[x]^*$, contradicting Lemma \ref{lem:[x]npreceq[x]*}.
\end{proof}

\begin{lemma}\label{lem:nestedcP}
	$(\cP,\leq,*)$ is a pocset with no pair of transverse elements.
\end{lemma}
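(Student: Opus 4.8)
The plan is to verify the two pocset axioms and then the absence of transverse pairs, using the groundwork already established. For the pocset axioms: Lemma~\ref{lem:partial} already gives that $\leq$ is a partial order, so the remaining axioms from Definition~\ref{defn:pocset} are that $(\fa,\h)$ and $(\fa,\h)^*$ are incomparable, and that $(\fa_1,\h_1)\leq(\fa_2,\h_2)$ implies $(\fa_2,\h_2)^*\leq(\fa_1,\h_1)^*$. The incomparability of $(\fa,\h)$ and $(\fa,\h)^*$ follows immediately from Lemma~\ref{lem:notboth}: if $(\fa,\h)\leq(\fa,\h)^*$ then taking $(\fa_1,\h_1)=(\fa,\h)$ and $(\fa_2,\h_2)=(\fa,\h)^*$ we would need $(\fa,\h)\leq(\fa,\h)^*$ and its starred version $(\fa,\h)^*\leq(\fa,\h)^*$ (reflexivity), contradicting Lemma~\ref{lem:notboth}; and symmetrically for the reverse comparison.

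For the order-reversal axiom, I would argue by cases on the type of inequality $(\fa_1,\h_1)\leq(\fa_2,\h_2)$. If it is of type \ref{item:prec}, so $\fa_1\prec\fa_2$, then I claim $\fa_2^*\prec\fa_1^*$. The containment $\fa_2^*\preceq\fa_1^*$ should follow because any path witnessing $\fa_1\preceq\fa_2$ can be ``reflected'' — more precisely, in each of the three cases the complement $\fa^*$ is governed by a halfspace $\h^*$, and the coarse geometry of walls controls how complements of $\sim$-classes and halfspaces relate; one uses Lemma~\ref{lem:[y]*preceq} (and its analogue with roles of $x,y$ swapped) together with Lemma~\ref{lem:hdeep} to convert $\fa_1\subseteq$ (bounded nbhd of) $\fa_2$ into $\fa_2^* \subseteq$ (bounded nbhd of) $\fa_1^*$. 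That $\fa_1^* \npreceq \fa_2^*$ (giving the strict inequality) would follow from the assumption $\fa_2\npreceq\fa_1$ by the same reflection. If the inequality is of type \ref{item:preceq}, so $\fa_1\preceq\fa_2$ and $\h_1\subseteq\h_2$, then $\h_2^*\subseteq\h_1^*$ and the coarse-inclusion part gives $\fa_2^*\preceq\fa_1^*$ as before, so $(\fa_2,\h_2)^*\leq(\fa_1,\h_1)^*$ is again of type \ref{item:preceq}.

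For the absence of transverse pairs: given $(\fa_1,\h_1),(\fa_2,\h_2)\in\cP$, I need one of the four relations $(\fa_1,\h_1)\leq(\fa_2,\h_2)$, $(\fa_1,\h_1)\leq(\fa_2,\h_2)^*$, $(\fa_2,\h_2)\leq(\fa_1,\h_1)$, $(\fa_2,\h_2)\leq(\fa_1,\h_1)^*$. The strategy is to reduce to nestedness of the underlying halfspaces $\h_1,\h_2\in\cH(X)$: since $\cH(X)\cong\cH(T)$ as pocsets and $T$ is a tree, either $\h_1,\h_2$ are nested or $\h_1,\h_2^*$ are nested (after possibly replacing $\h_2$ by $\h_2^*$, we may assume $\h_1\subseteq\h_2$, $\h_2\subseteq\h_1$, or $\h_1\cap\h_2$ is a ``small'' overlap governed by the tree structure — in a tree any two halfspaces are nested up to the involution, so in fact $\h_1\subseteq\h_2$, $\h_1^*\subseteq\h_2$, $\h_2\subseteq\h_1$, or $\h_2^*\subseteq\h_1$). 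One then analyzes how $\fa_1,\fa_2$ sit inside their halfspaces in each case. When $\h_1=\h_2$ (or $\h_1=\h_2^*$), the needed relation comes from Lemma~\ref{lem:[x]leq[y]}: distinct $\sim$-classes $[x],[y]\in\cM_0$ satisfy $([x],\h_0)\leq([y]^*,\h_0^*)$, which handles the $\h_1=\h_2$ subcase, and the $\h_1=\h_2^*$ subcase is trivial. When the halfspaces are strictly nested, say $\h_1\subsetneq\h_2$, one uses Lemma~\ref{lem:hin[x]} (every halfspace $\subsetneq\h_0$ lies in a single $\sim$-class) together with Lemma~\ref{lem:[y]*preceq} to show that $\fa_1$ coarsely sits inside whichever of $\fa_2,\fa_2^*$ corresponds to the side of the wall $\hh_2$ containing $\h_1$, yielding an inequality of type \ref{item:prec} or \ref{item:preceq}; the strict-containment of halfspaces plus Lemma~\ref{lem:hdeep} (halfspaces are coarsely deep, walls are unbounded) ensures this is a genuine coarse inclusion.

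I expect the main obstacle to be the transversality argument in the mixed cases — when one of the elements is of ``halfspace type'' $(\h,\h)$ with $\h\in\cH^\#$ and the other is of ``$\sim$-class type'' $(g[x],g\h_0)$, or when both are $\sim$-class type but based at different $G$-translates of $\h_0$. The bookkeeping here is delicate: one must track both the coarse-inclusion relation $\preceq$ among the $\fa_i$ and the genuine inclusion among the $\h_i$, and show they are compatible. The key geometric inputs are that walls $\hh$ separate $X$ into coarsely-disjoint pieces (so the $\preceq$ relation between pieces on opposite sides of a wall is controlled) and Lemma~\ref{lem:[y]*preceq}, which is precisely the statement needed to push a $\sim$-class coarsely across a wall of a translate of $\h_0$. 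I would organize the proof so that after fixing the nestedness type of $(\h_1,\h_2)$, each remaining verification is a short application of one of Lemmas~\ref{lem:hin[x]}, \ref{lem:[y]*preceq}, \ref{lem:[x]leq[y]}, together with the observation that the $\sim$-classes in $\cM_0$ are, by definition, coarsely far from the relevant walls.
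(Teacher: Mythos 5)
Your overall architecture is close to the paper's, but there is a real gap in how you handle the order-reversal axiom. You treat it as a separate, lighter step from the no-transversality argument, claiming that $\fa_1\preceq\fa_2$ can be ``reflected'' to give $\fa_2^*\preceq\fa_1^*$. But for coarse inclusion of subsets this is simply false in general: for instance $\fa_1=X$ satisfies $\fa_1\preceq\fa_2$ for any coarsely dense $\fa_2\subsetneq X$, yet $\fa_2^*=X-\fa_2$ is nonempty while $\fa_1^*=\emptyset$, so $\fa_2^*\not\preceq\fa_1^*$. Coarse inclusion is not contravariant under complementation the way set-theoretic inclusion is, and the $\fa$-components of elements of $\cP$ are related to their complements in structurally different ways depending on the type (for $(\h,\h)$ the complement is again a halfspace, but for $(g[x],g\h_0)$ the complement $g[x]^*$ is the union of $g\h_0^*$ with all the other $\sim$-classes in $g\h_0$, which is a much larger and less structured set). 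So the claim $\fa_1\prec\fa_2\Rightarrow\fa_2^*\prec\fa_1^*$ needs to be established by the same kind of case analysis you envision for transversality, using Lemmas~\ref{lem:hin[x]}, \ref{lem:[y]*preceq}, \ref{lem:[x]leq[y]} in each case; it is not a consequence of a general reflection principle.

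The paper avoids this trap by not proving the two axioms separately. It reformulates the combined statement ``$\leq$ reverses under $*$ and $\cP$ has no transverse pairs'' as: for any distinct $(\fa_1,\h_1),(\fa_2,\h_2)$ exactly one of four ``rows'' of paired inequalities holds (each row is a statement plus its $*$-dual). Then one observes from Lemma~\ref{lem:notboth} and antisymmetry that at most one row can hold, and proves via the nestedness/case analysis in $\cH(X)$ that at least one row holds -- and crucially, establishing a full row establishes \emph{both} the forward inequality and its starred reverse at once, so order-reversal falls out of the same case analysis for free. If you keep your two-step organization, you will end up duplicating essentially the entire case analysis; the paper's row formulation is the cleaner route, and you should adopt it or at least recognize that order-reversal is not a shortcut but requires the full argument.
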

\begin{proof}
	Firstly, by Lemma \ref{lem:notboth} and reflexivity of $\leq$ we can never have $(\fa,\h)\leq(\fa,\h)^*$.
	It remains to show that $(\fa_1,\h_1)\leq(\fa_2,\h_2)$ implies $(\fa_2,\h_2)^*\leq(\fa_1,\h_1)^*$, and that $\cP$ has no pair of transverse elements.
	Equivalently, we must show that for all distinct $(\fa_1,\h_1),(\fa_2,\h_2)\in \cP$ exactly two of the following inequalities hold, and that these two inequalities are in the same row.
	\begin{align}\label{rows}
(\fa_1,\h_1)&\leq(\fa_2,\h_2)&(\fa_2,\h_2)^*&\leq(\fa_1,\h_1)^*\\\nonumber
(\fa_1,\h_1)&\leq(\fa_2,\h_2)^*&(\fa_2,\h_2)&\leq(\fa_1,\h_1)^*\\\nonumber
(\fa_1,\h_1)^*&\leq(\fa_2,\h_2)&(\fa_2,\h_2)^*&\leq(\fa_1,\h_1)\\\nonumber
(\fa_1,\h_1)^*&\leq(\fa_2,\h_2)^*&(\fa_2,\h_2)&\leq(\fa_1,\h_1)
	\end{align}

To begin, let's show that if one of the rows of (\ref{rows}) holds then all inequalities in the other rows fail.
Without loss of generality, assume that the first row of (\ref{rows}) holds.
Then $(\fa_1,\h_1)\leq(\fa_2,\h_2)^*$ fails, else we get $(\fa_1,\h_1)\leq(\fa_2,\h_2)^*\leq(\fa_1,\h_1)^*$, contradicting what we said at the start of the proof.
The other inequalities in the second and third rows of (\ref{rows}) fail similarly.
And the inequalities in the fourth row fail because otherwise we could combine them with the inequalities in the first row and deduce that $(\fa_1,\h_1)=(\fa_2,\h_2)$ using antisymmetry of $\leq$.

We will spend the rest of the proof showing that one of the rows of (\ref{rows}) holds.	
Among elements of the form $(V\h,\h)$ in $\cP$ (with $\h\in\cH^\#$), the relation $\leq$ is determined by the projection of these elements to the pocset $(\cH(X),\subseteq)$. Since this pocset has no pairs of transverse elements, we deduce that if $(\fa_1,\h_1),(\fa_2,\h_2)$ are both of the form $(V\h,\h)$ with $\h\in\cH^\#$, then one of the rows of (\ref{rows}) holds.

Henceforth, assume that $(\fa_1,\h_1),(\fa_2,\h_2)$ are not both of the form $(V\h,\h)$ with $\h\in\cH^\#$.
	Without loss of generality, assume $(\fa_1,\h_1)$ is not of this form, so $(\fa_1,\h_1)$ is of the form $(g[x],g\h_0)$ or $(g[x]^*,g\h_0^*)$.
	Since $G$ acts on $\cP$ respecting $\leq$ and $*$, it suffices to consider the case where $(\fa_1,\h_1)$ is of the form $([x],\h_0)$ or $([x]^*,\h_0^*)$.
	And replacing $(\fa_1,\h_1)$ with $(\fa_1,\h_1)^*$ if necessary, we may assume $(\fa_1,\h_1)=([x],\h_0)$.
	We now have three cases to consider:
	
	\begin{enumerate}
		\item\label{item:h=h0} Suppose $\h_2\in\{\h_0,\h_0^*\}$. Replacing $(\fa_2,\h_2)$ with $(\fa_2,\h_2)^*$ if necessary, we may assume that $\h_2=\h_0$.
		Then $(\fa_2,\h_2)=([y],\h_0)$ for some $[y]\in\cM_0$. Since $(\fa_1,\h_1),(\fa_2,\h_2)$ are distinct, we have $[x]\neq[y]$, so the second row of (\ref{rows}) holds by Lemma \ref{lem:[x]leq[y]}.
		
		\item\label{item:hsubseth0} Suppose $\h_2\subsetneq\h_0$ or $\h_2^*\subsetneq\h_0$. Replacing $(\fa_2,\h_2)$ with $(\fa_2,\h_2)^*$ if necessary, we may assume that $\h_2\subsetneq\h_0$.
		By Lemma \ref{lem:hin[x]}, there is a unique $[y]\in\cM_0$ with $V\h_2\subseteq[y]$.
		Let's first show that either the third or fourth row of (\ref{rows}) holds in the case $[x]=[y]$.
		We split into three subcases:
		\begin{enumerate}
			\item Suppose $(\fa_2,\h_2)=(V\h_2,\h_2)$. Then $\fa_2=V\h_2\subseteq[y]=\fa_1$, so the fourth row of (\ref{rows}) holds (and they are inequalities of type \ref{item:preceq}).
			\item Suppose $(\fa_2,\h_2)=(g[z],g\h_0)$, with $g\in G$, $[z]\in\cM_0$.
			Then $\fa_2=g[z]\subseteq V\h_2\subseteq[y]=\fa_1$, so again the fourth row of (\ref{rows}) holds.
			\item Suppose $(\fa_2,\h_2)=(g[z]^*,g\h_0^*)$, with $g\in G$, $[z]\in\cM_0$.
			Let $[y']$ be the $\sim$-class with $V\h_0^*\subseteq g[y']$.
			By Lemma \ref{lem:[y]*preceq} (applied both ways round), we have $g[y']^*\preceq[y]$ and $[y]^*\preceq g[y']$, so
			\begin{equation}\label{g[y']a1}
				(g[y']^*,g\h_0^*)\leq(\fa_1,\h_1)\quad\text{and}\quad(\fa_1,\h_1)^*\leq(g[y'],g\h_0).
			\end{equation}			
			If $[z]=[y']$, then the fourth row of (\ref{rows}) holds by (\ref{g[y']a1}).
			If $[z]\neq[y']$, then
			\begin{equation}\label{a2g[y']}
				(\fa_2,\h_2)^*\leq(g[y']^*,g\h_0^*)\quad\text{and}\quad(g[y'],g\h_0)\leq(\fa_2,\h_2)
			\end{equation}
		by Lemma \ref{lem:[x]leq[y]}, and the third row of (\ref{rows}) holds by combining (\ref{g[y']a1}) and (\ref{a2g[y']}).

		\end{enumerate}		
		Now suppose $[x]\neq[y]$. Then
		\begin{equation}\label{a1[y]}
(\fa_1,\h_1)\leq([y],\h_0)^* \quad\text{and}\quad ([y],\h_0)\leq(\fa_1,\h_1)^*
		\end{equation}  by Lemma \ref{lem:[x]leq[y]}.
		By what we showed above for the case $[x]=[y]$, we know that either the third or fourth row of (\ref{rows}) holds for the pair $([y],\h_0),(\fa_2,\h_2)$, namely we must be in one of the following two subcases:
		\begin{enumerate}[label={(\roman*)}]
			\item Suppose $([y],\h_0)^*\leq(\fa_2,\h_2)$ and $(\fa_2,\h_2)^*\leq([y],\h_0)$.
			Combining these inequalities with (\ref{a1[y]}) we deduce that the first row of (\ref{rows}) holds for $(\fa_1,\h_1),(\fa_2,\h_2)$.
			\item Suppose $([y],\h_0)^*\leq(\fa_2,\h_2)^*$ and $(\fa_2,\h_2)\leq([y],\h_0)$.
			Combining these inequalities with (\ref{a1[y]}) we deduce that the second row of (\ref{rows}) holds for $(\fa_1,\h_1),(\fa_2,\h_2)$.
		\end{enumerate}
				
		\item Suppose $\h_0\subsetneq\h_2$ or $\h_0\subsetneq\h_2^*$. Replacing $(\fa_2,\h_2)$ with $(\fa_2,\h_2)^*$ if necessary, we may assume that $\h_0\subsetneq\h_2$.
		Again, we split into three subcases:
		\begin{enumerate}
			\item If $(\fa_2,\h_2)=(V\h_2,\h_2)$, then $[x]\subseteq\h_0\subseteq\h_2$, so the first row of (\ref{rows}) holds (and they are both inequalities of type \ref{item:preceq}).
			\item If $(\fa_2,\h_2)=(g[z],g\h_0)$, with $g\in G$ and $[z]\in\cM_0$, then up to the action of $G$ (and with the roles of $(\fa_1,\h_1),(\fa_2,\h_2)$ reversed) this is the same as case \ref{item:hsubseth0}.
			\item Finally, if $(\fa_2,\h_2)=(g[z]^*,g\h_0^*)$, with $g\in G$ and $[z]\in\cM_0$, then $[x]\subseteq \h_0\subseteq \h_2\preceq g[z]^*$, so the first row of (\ref{rows}) holds (and they are both inequalities of type \ref{item:preceq}).\qedhere
		\end{enumerate}	
	\end{enumerate}
\end{proof}

In the next four lemmas we prove that $\cP$ is a discrete pocset (see Definition \ref{defn:pocset}).

\begin{lemma}\label{lem:hina}
	If $(\fa_1,\h_1)\leq(\fa_2,\h_2)$ with $\hh_1\neq\hh_2$ then $V\hh_1\subseteq\fa_2$.
\end{lemma}
\begin{proof}
	We split into three cases depending on the form of $(\fa_2,\h_2)$.
	\begin{enumerate}
		\item\label{item:Vh2} Suppose $(\fa_2,\h_2)=(V\h_2,\h_2)$ with $\h_2\in\cH^\#$.
		If $\hh_1\subseteq\h_2$ then we are done, so suppose  $\hh_1\subseteq\h_2^*$.
		As $\fa_1\preceq\fa_2=V\h_2$ and $\hh_1\neq\hh_2$, we reduce to two possibilities for $(\fa_1,\h_1)$:
		\begin{enumerate}
			\item Suppose $(\fa_1,\h_1)=(V\h_1,\h_1)$ with $\h_2\subsetneq\h_1$.
			Then $\fa_2=V\h_2\subseteq V\h_1=\fa_1$ and $\h_1\not\subseteq\h_2$, contradicting $(\fa_1,\h_1)\leq(\fa_2,\h_2)$.
			\item Suppose $(\fa_1,\h_1)=(g[x],g\h_0)$ with $\h_2\subsetneq g\h_0$.
			By Lemma \ref{lem:hin[x]} we know that $V\h_2\subseteq g[y]$ for some $[y]\in\cM_0$.
			If $[x]\neq[y]$ then $g[x]=\fa_1\preceq\fa_2=V\h_2\subseteq g[y]\subseteq g[x]^*$, contradicting Lemma \ref{lem:[x]npreceq[x]*}.
			If $[x]=[y]$ then $\fa_2=V\h_2\subseteq g[y]=\fa_1$ and we contradict $(\fa_1,\h_1)\leq(\fa_2,\h_2)$ (noting $\h_1\not\subseteq\h_2$).
		\end{enumerate}
		
		\item Suppose $(\fa_2,\h_2)=(g_2[x_2],g_2\h_0)$.
		If $\hh_1\subseteq\h_2^*$ then we can obtain a contradiction by the same argument as in case \ref{item:Vh2}, so we may suppose $\hh_1\subseteq\h_2=g_2\h_0$.
		By Lemma \ref{lem:hin[x]} we have $V\hh_1\subseteq g_2[y_2]$ for some $[y_2]\in\cM_0$.
		If $[x_2]=[y_2]$ we are done, so suppose $[x_2]\neq[y_2]$.
		\begin{enumerate}
			\item Suppose $\h_1\subseteq g_2\h_0$.
			Then $V\h_1\subseteq g_2[y_2]$, which is disjoint from $\fa_2=g_2[x_2]$, so any path from $\fa_1\cap\h_1$ to $\fa_2=g_2[x_2]$ must first cross $\hh_1$.
			But $\fa_1\cap\h_1\not\preceq\hh_1$ (indeed this holds for any $(\fa,\h)\in\cP$), so $\fa_1\not\preceq\fa_2$, contradicting $(\fa_1,\h_1)\leq(\fa_2,\h_2)$.
			\item Suppose $\h_1^*\subseteq g_2\h_0$.
			If $V\h_2^*=g_2 V\h_0^*\subseteq\fa_1$ then we contradict $\fa_1\preceq\fa_2$, hence $(\fa_1,\h_1)$ must take the form $(g_1[x_1],g_1\h_0)$, with $g_2 V\h_0^*\not\subseteq g_1[x_1]$.
			By Lemma \ref{lem:hin[x]} we have $g_2 V\h_0^*\subseteq g_1[y_1]$ for some $[x_1]\neq[y_1]\in\cM_0$.
			But then Lemma \ref{lem:[y]*preceq} implies that $g_2[y_2]^*\preceq g_1[y_1]$, so $g_1[x_1]=\fa_1\preceq\fa_2=g_2[x_2]\subseteq g_2[y_2]^*\preceq g_1[y_1]\subseteq g_1[x_1]^*$.
			This contradicts Lemma \ref{lem:[x]npreceq[x]*}.
		\end{enumerate}
		
		\item Suppose $(\fa_2,\h_2)=(g_2[x_2]^*,g_2\h_0^*)$.
		If $\hh_1\subseteq\h_2=g_2\h_0^*$ then we are done, so suppose $\hh_1\subseteq g_2\h_0$.
		By Lemma \ref{lem:hin[x]} we have $V\hh_1\subseteq g_2[y_2]$ for some $[y_2]\in\cM_0$.
		If $[x_2]\neq[y_2]$ then $V\hh_1\subseteq g_2[y_2]\subseteq g_2[x_2]^*=\fa_2$ and we are done.
		So suppose $[x_2]=[y_2]$.
		\begin{enumerate}
			\item Suppose $\h_1\subseteq g_2\h_0$. Then $\h_1\subseteq g_2[x_2]$, so any path from $\fa_1\cap\h_1$ to $\fa_2=g_2[x_2]^*$ must first cross $\hh_1$. But $\fa_1\cap\h_1\not\preceq\hh_1$ (indeed this holds for any $(\fa,\h)\in\cP$), so $\fa_1\not\preceq\fa_2$, contradicting $(\fa_1,\h_1)\leq(\fa_2,\h_2)$.
			\item Suppose $\h_1^*\subseteq g_2\h_0$.
			If $V\h_1-V\hh_1\subseteq\fa_1$ then $\fa_2=g_2[x_2]^*\subseteq\fa_1$, and, combined with $\h_1\not\subseteq\h_2$, this contradicts $(\fa_1,\h_1)\leq(\fa_2,\h_2)$.
			So $V\h_1-V\hh_1\not\subseteq\fa_1$, and $(\fa_1,\h_1)$ takes the form $(g_1[x_1],g_1\h_0)$.
			By Lemma \ref{lem:hin[x]} there is $[y_1]\in\cM_0$ with $V\h_2=g_2 V\h_0^*\subseteq g_1[y_1]$.
			By Lemma \ref{lem:[y]*preceq}, we have $\fa_2=g_2[x_2]^*\preceq g_1[y_1]$.
			If $[x_1]=[y_1]$ then $\fa_2\preceq\fa_1$, and we again contradict $(\fa_1,\h_1)\leq(\fa_2,\h_2)$.
			So $[x_1]\neq[y_1]$.
			But then $g_1[x_1]=\fa_1\preceq\fa_2\preceq g_1[y_1]\subseteq g_1[x_1]^*$ and we contradict Lemma \ref{lem:[x]npreceq[x]*}.\qedhere		
		\end{enumerate}
	\end{enumerate}
\end{proof}

\begin{lemma}\label{lem:einhhnbhd}
	There is a constant $K>0$ such that, for any $(\fa,\h)\in\cP$, any edge $e$ going from $\fa$ to $VX-\fa$ is contained in the $K$-neighborhood of $\hh$.
\end{lemma}
\begin{proof}
	Let $K\geq1$ be such that $\delta$ is contained in the $K$-neighborhood of $\hh_0$.
	If $(\fa,\h)$ takes the form $(V\h,\h)$ with $\h\in\cH^\#$ then $e$ must go from $\hh$ to $X-\h$, so $e$ is certainly in the $K$-neighborhood of $\hh$.
	If $(\fa,\h)$ takes the form $(g[x],g\h_0)$, then $e$ either goes from $g\hh_0$ to $X-g\h_0$ or it goes from $g[x]$ to $g[y]$ for some $x\nsim y\in V\h_0$.
	In the former case, $e$ is certainly in the $K$-neighborhood of $\hh=g\hh_0$.
	In the latter case, $e$ must belong to a set $gg_0\delta$ with $g_0\in G_0$, so $e$ is in the $K$-neighborhood of $\hh=g\hh_0$ by definition of $K$.
	If $(\fa,\h)$ takes the form $(g[x]^*,g\h_0^*)$ then $\fa$ is the complement in $VX$ of $g[x]$, so the argument works the same as the previous case.
\end{proof}

\begin{lemma}\label{lem:eachh3}
Let $(\fa_1,\h_1)\leq(\fa_2,\h_2)$ in $\cP$. For each $\h_3\in\cH(X)$ there is at most one $(\fa_3,\h_3)\in\cP$ with $(\fa_1,\h_1)\leq(\fa_3,\h_3)\leq(\fa_2,\h_2)$.
\end{lemma}
\begin{proof}
	If $\h_3\in\cH^\#$ then we must have $\fa_3=V\h_3$.
	Now suppose $\h_3=g\h_0$.
	Suppose there are two choices for $\fa_3$ that work.
	This means there are distinct $[x],[y]\in\cM_0$ with $(g[x],g\h_0),(g[y],g\h_0)$ both greater than $(\fa_1,\h_1)$ and both less than $(\fa_2,\h_2)$.
	By Lemma \ref{lem:[x]leq[y]}, we have $(g[x],g\h_0)\leq(g[y]^*,g\h_0^*)=(g[y],g\h_0)^*$.
	Also $(\fa_1,\h_1)\leq(g[y],g\h_0)$ implies $(g[y],g\h_0)^*\leq(\fa_1,\h_1)^*$.
	Putting this together we get $(\fa_1,\h_1)\leq(g[x],g\h_0)\leq(\fa_1,\h_1)^*$, which contradicts $\cP$ being a pocset (Definition \ref{defn:pocset}\ref{item:AA*}).
	So there is at most one choice for $\fa_3$ that works.
	Finally, the case $\h_3=g\h_0^*$ can be reduced to the case $\h_3=g\h_0$ by applying the involution $*$ of $\cP$.
\end{proof}

\begin{lemma}\label{lem:discrete}
	$\cP$ is discrete.
\end{lemma}
\begin{proof}
	Let $(\fa_1,\h_1)\leq(\fa_2,\h_2)$ in $\cP$. We must show that there are only finitely many $(\fa_3,\h_3)\in\cP$ with $(\fa_1,\h_1)\leq(\fa_3,\h_3)\leq(\fa_2,\h_2)$. 
	By Lemma \ref{lem:eachh3}, we may restrict to those $(\fa_3,\h_3)$ with $\hh_3\neq\hh_1,\hh_2$.	
	Fix a path $\gamma$ in $X$ from $\hh_1$ to $\hh_2$.
	For each $(\fa_3,\h_3)$, we have $(\fa_1,\h_1)\leq(\fa_3,\h_3)$ and $(\fa_2,\h_2)^*\leq(\fa_3,\h_3)^*$, so Lemma \ref{lem:hina} tells us that $V\hh_1\subseteq\fa_3$ and $V\hh_2\subseteq \fa_3^*$ (where we write $(\fa_3,\h_3)^*=(\fa_3^*,\h_3^*)$).
	Since $\hh_3\neq\hh_2$ and $\fa_3^*\subseteq (VX-\fa_3)\cup\hh_3$, we also have $V\hh_2\subseteq VX-\fa_3$.
	As a result, the path $\gamma$ contains an edge $e$ that goes from $\fa_3$ to $VX-\fa_3$.
	By Lemma \ref{lem:einhhnbhd}, this edge $e$ is contained in the $K$-neighborhood of $\hh_3$ (where $K>0$ is a fixed constant).
	By local finiteness of $X$, and the fact that the collection of walls $(\hh)$ is disjoint, we see that there are only finitely many possibilities for $\h_3$.
	Finally, by Lemma \ref{lem:eachh3}, we deduce that there are only finitely many possibilities for $(\fa_3,\h_3)$.
\end{proof}

\subsection{The tree $T'$}\label{subsec:T'}

Let $T'$ be the cubing of $\cP$.
We know that $\cP$ is discrete by Lemma \ref{lem:discrete} and it has no pair of transverse elements by Lemma \ref{lem:nestedcP}, hence Proposition \ref{prop:cubingtree} tells us that the cubing $T'$ is well-defined and is a tree.
Moreover, the action of $G$ on $\cP$ induces an action of $G$ on $T'$.
The edges in $T'$ correspond to pairs $\{(\fa,\h),(\fa,\h)^*\}\subseteq\cP$, and, since $(\fa,\h)^*$ always takes the form $(\fa',\h^*)$, an edge inversion in $T'$ would mean there is an element $g\in G$ with $g\h=\h^*$.
But that would give rise to an edge inversion in $T$, which doesn't happen, hence $G$ acts on $T'$ without edge inversions, i.e. $G\acts T'$ is a splitting.
We may assume that the action $G\acts T'$ is minimal since we can always replace $T'$ with a minimal $G$-invariant subtree.
The description of edges in $T'$ also makes it clear that every edge stabilizer of $T'$ is contained in an edge stabilizer of $T$.

\begin{lemma}
	Every vertex stabilizer of $T'$ is contained in a vertex stabilizer of $T$.
\end{lemma}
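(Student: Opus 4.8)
The plan is to describe the vertices of $T'$ and of $T$ as ultrafilters and then build a $G$-equivariant comparison. By Proposition~\ref{prop:cubing}, a vertex of $T'$ is a DCC ultrafilter $\omega$ on $\cP$, while a vertex of $T$ is a DCC ultrafilter on the pocset $\cH(X)\cong\cH(T)$ (Definition~\ref{defn:ultrafilter}). Hence it suffices to produce, in a $G$-equivariant way, a DCC ultrafilter $\bar\omega$ on $\cH(X)$ out of each DCC ultrafilter $\omega$ on $\cP$: then $G$-equivariance gives $G_\omega\subseteq G_{\bar\omega}$, and $G_{\bar\omega}$ is a vertex stabilizer of $T$, as wanted (and this is unaffected by later passing to a minimal invariant subtree of $T'$).

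To define $\bar\omega$, recall that $\cP$ is a disjoint union of the three families of elements appearing in its definition. Let $\pi\colon\cP\to\cH(X)$ be the ``second coordinate'' map $(\fa,\h)\mapsto\h$, write $\cO_0:=G\cdot\h_0$ and $\cO_0^*:=G\cdot\h_0^*$ (these orbits are disjoint, since $T$ has no edge inversions, and their complement in $\cH(X)$ is the set of $\h\in\cH^\#$), and for $\h\in\cH(X)$ put $\cP_\h:=\pi^{-1}(\h)$. Then declare: for $\h\in\cH^\#$, that $\h\in\bar\omega$ iff $(\h,\h)\in\omega$; for $\h=g\h_0\in\cO_0$, that $\h\in\bar\omega$ iff $\omega\cap\cP_\h\ne\emptyset$ (equivalently $(g[x],g\h_0)\in\omega$ for some $[x]\in\cM_0$); and for $\h\in\cO_0^*$, that $\h\in\bar\omega$ iff $\h^*\notin\bar\omega$. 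The first thing I would check is that this is well defined -- the only issue being independence of the choice of $g$ with $g\h_0$ equal to a given halfspace, which holds because $\cM_0$ is $G_0$-invariant (immediate from the definitions of $\cM_0$ and of $\sim$) -- and $G$-equivariant; both are routine.

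Next I would verify that $\bar\omega$ is a DCC ultrafilter on $\cH(X)$. \emph{Completeness:} for $\h\in\cH^\#$ and $\h\in\cO_0^*$ it is immediate from the definition, and for $\h=g\h_0\in\cO_0$ it follows once one observes, using Lemma~\ref{lem:[x]leq[y]}, that $\omega$ contains \emph{at most one} element of the form $(g[x],g\h_0)$ -- if $(g[x],g\h_0),(g[y],g\h_0)\in\omega$ with $[x]\ne[y]$ then $(g[x],g\h_0)\le(g[y]^*,g\h_0^*)$ forces $(g[y]^*,g\h_0^*)\in\omega$, contradicting completeness of $\omega$ -- so exactly one of $g\h_0,g\h_0^*$ lies in $\bar\omega$. \emph{Consistency}, i.e.\ $\h_1\subsetneq\h_2$ and $\h_1\in\bar\omega$ imply $\h_2\in\bar\omega$, is proved by a case analysis on which of $\cH^\#,\cO_0,\cO_0^*$ contains each of $\h_1,\h_2$: when $\h_2\notin\cO_0^*$ one exhibits an element of $\omega$ lying over $\h_1$ that is $\le$ a suitable element lying over $\h_2$ (locating the right $\sim$-class via Lemma~\ref{lem:hin[x]} when $\h_2\in\cO_0$, and using Lemma~\ref{lem:[y]*preceq} when $\h_1\in\cO_0^*$), so consistency of $\omega$ finishes; when $\h_2=g\h_0^*\in\cO_0^*$ one instead assumes for contradiction that $g\h_0\in\bar\omega$, fixes a witness $(g[x],g\h_0)\in\omega$, and shows it lies $\le$ the $*$-image of a witness for $\h_1\in\bar\omega$, so that $\omega$ contains two complementary elements -- here one uses $g\h_0\subseteq\h_1^*$, Lemma~\ref{lem:hin[x]}, and the elementary fact that a halfspace strictly contained in another halfspace of a tree is disjoint from the latter's bounding edge-midpoint. \emph{DCC:} given a strictly descending chain $\h_1\supsetneq\h_2\supsetneq\cdots$ in $\bar\omega$, pass to a subchain lying entirely in one of $\cH^\#,\cO_0,\cO_0^*$; in the first two cases the elements of $\omega$ over the $\h_i$ are uniquely determined and nest downward (by the consistency argument plus uniqueness of the witnessing $\sim$-class), while in the $\cO_0^*$ case every element of $\cP_{\h_i}$ lies in $\omega$, so one can choose the $\sim$-classes along the chain (via Lemma~\ref{lem:hin[x]}) so that the chosen elements nest; either way one gets a strictly descending chain in $\omega$, contradicting DCC of $\omega$.

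The main obstacle is the consistency check. The awkwardness is that the definition of $\bar\omega$ is asymmetric -- on $\cO_0^*$ it is phrased negatively -- so in the subcases where $\h_1$ or $\h_2$ lies in $\cO_0^*$ one cannot simply pull an element of $\omega$ upward along $\le$, and must argue through the complementary orbit $\cO_0$, which requires some bookkeeping with coarse inclusions of $\sim$-classes. Once the correct formulation of $\bar\omega$ and the case organization are fixed, each individual case is short and uses only Lemmas~\ref{lem:hin[x]}, \ref{lem:[y]*preceq} and~\ref{lem:[x]leq[y]} together with elementary properties of halfspaces in trees; the remaining points -- well-definedness, $G$-equivariance, completeness, DCC -- are routine.
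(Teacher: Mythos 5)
Your proposal is correct and follows essentially the same route as the paper: both construct a $G$-equivariant map $VT'\to VT$ by viewing vertices as DCC ultrafilters (Proposition~\ref{prop:cubing}) and pushing forward an ultrafilter $\omega$ on $\cP$ to an ultrafilter on $\cH(X)\cong\cH(T)$, then verifying completeness, consistency, and DCC using Lemmas~\ref{lem:hin[x]}, \ref{lem:[y]*preceq} and~\ref{lem:[x]leq[y]}. Your definition of $\bar\omega$ on the orbit $G\cdot\h_0^*$ is phrased negatively (as the complement of its $\ast$-image) rather than via the ``for all $[x]\in\cM_0$'' condition the paper uses, but the two are equivalent by completeness of $\omega$; the paper's version has the minor advantage of making the consistency check uniform (via a single set of inequalities of the form (\ref{leq}) in $\cP$) whereas your negative phrasing forces a contradiction argument in the subcase $\h_2\in G\cdot\h_0^*$, which is the bookkeeping you correctly flag as the main awkwardness.
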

\begin{proof}
	We will prove this by constructing a $G$-equivariant map $\phi:VT'\to VT$.
	As in Proposition \ref{prop:cubing}, we will view the vertices of $T$ as DCC ultrafilters on $\cH(X)$ (which is isomorphic to the pocset $\cH(T)$) and the vertices of $T'$ as DCC ultrafilters on $\cP$.
	Given $v\in VT'$, we then define
	\begin{align*}
		\phi(v):=&\{\h\in\cH^\#\mid(V\h,\h)\in v\}\\
		&\cup\{g\h_0\mid g\in G,\,(g[x],g\h_0)\in v\text{ for some }[x]\in\cM_0\}\\
		&\cup\{g\h_0^*\mid g\in G,\,(g[x]^*,g\h_0^*)\in v\text{ for all }[x]\in\cM_0\}.
	\end{align*}
	Note that the properties required for $g\h_0$ and $g\h_0^*$ to be in $\phi(v)$ only depend on the halfspaces $g\h_0$ and $g\h_0^*$, not on the element $g\in G$.
	The map $\phi:VT'\to VT$ is $G$-equivariant by definition of the $G$-action on $\cP$.
	Following Definition \ref{defn:ultrafilter}, we must show that $\phi(v)$ is complete, consistent, and DCC.
	
	Let's first prove completeness of $\phi(v)$.
	For $\h\in\cH^\#$, exactly one of $(V\h,\h)$ and $(V\h^*,\h^*)$ is in $v$ by completeness of $v$, so exactly one of $\h$ and $\h^*$ is in $\phi(v)$.
	For $g\in G$ and $[x]\in\cM_0$, exactly one of $(g[x],g\h_0)$ and $(g[x]^*,g\h_0^*)$ is in $v$ by completeness of $v$, so for each $g\in G$ we have exactly one of $g\h_0$ and $g\h_0^*$ in $\phi(v)$.
	
	Before showing that $\phi(v)$ is consistent and DCC, we make the following observation, which follows from Lemmas \ref{lem:hin[x]} and \ref{lem:[y]*preceq}: if $\h_1\subsetneq\h_2$ in $\cH(X)$ then
	\begin{equation}\label{leq}
		(\fa_1,\h_1)\leq(\fa_2,\h_2)
	\end{equation}  
	in $\cP$, where
	\begin{enumerate}
		\item\label{item:h1} if $\h_1\in\cH^\#$ then $\fa_1=V\h_1$,
		\item\label{item:h1h0} if $\h_1=g_1\h_0$ then $\fa_1=g_1[x]$ and (\ref{leq}) holds for all choices of $[x]\in\cM_0$,
		\item\label{item:h1h0*} if $\h_1=g_1\h_0^*$ then $\fa_1=g_1[x]^*$ for the unique $[x]\in\cM_0$ with $V\h_2^*\subseteq g_1[x]$,
		\item\label{item:h2} if $\h_2\in\cH^\#$ then $\fa_2=V\h_2$,
		\item\label{item:h2h0} if $\h_2=g_2\h_0$ then $\fa_2=g_2[y]$ for the unique $[y]\in\cM_0$ with $V\h_1\subseteq g_2[y]$,
		\item\label{item:h2h0*} if $\h_2=g_2\h_0^*$ then $\fa_2=g_2[y]^*$ and (\ref{leq}) holds for all choices of $[y]\in\cM_0$.
	\end{enumerate}
	
	We now prove consistency of $\phi(v)$.
	If $\h_1\subsetneq\h_2$ and $\h_1\in\phi(v)$ then $(\fa_1,\h_1)\in v$, where $(\fa_1,\h_1)$ is given by the appropriate case from \ref{item:h1}--\ref{item:h1h0*} (and in case \ref{item:h1h0} we only have $(g_1[x],g_1\h_0)\in v$ for one $[x]\in\cM_0$). Consistency of $v$ then implies that $(\fa_2,\h_2)\in v$, where $(\fa_2,\h_2)$ is given by the appropriate case from \ref{item:h2}--\ref{item:h2h0*} (and in case \ref{item:h2h0*} we have $(g_2[y]^*,g_2\h_0^*)\in v$ for all $[y]\in\cM_0$). It follows that $\h_2\in\phi(v)$.
	
	Finally, we prove that $\phi(v)$ is DCC.
	Suppose for contradiction that we have an infinite descending chain $\h_1\supsetneq\h_2\supsetneq\h_3\supsetneq\dots$ of distinct elements in $\phi(v)$.
	Then we can turn this into an infinite descending chain
	\begin{equation}\label{chain}
	(\fa_1,\h_1)\geq(\fa_2,\h_2)\geq(\fa_3,\h_3)\geq\dots
	\end{equation}
	in $v$, where
	\begin{enumerate}
		\item if $\h_i\in\cH^\#$ then $\fa_i=V\h_i$,
		\item if $\h_i=g\h_0$ then $\fa_i=g[x]$ for the unique $[x]\in\cM_0$ with $V\h_{i+1}\subseteq g[x]$,
		\item if $\h_i=g\h_0^*$ then $\fa_i=g[x]^*$ for the unique $[x]\in\cM_0$ with $V\h_{i-1}^*\subseteq g[x]$ (or any $[x]\in\cM_0$ if $i=1$).
	\end{enumerate}
	(\ref{chain}) is a chain for the same reason that the inequality (\ref{leq}) holds.
	But of course this contradicts the fact that $v$ is DCC.
\end{proof}

Property \ref{item:T'toTe} in Theorem \ref{thm:smallerstabs} is given by the following lemma.

\begin{lemma}\label{lem:ET'toVTe}
	There is an injective $G$-equivariant map $\tau:ET'\to\sqcup_e VT_e$.
\end{lemma}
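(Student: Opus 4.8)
The plan is to construct $\tau$ by sending each edge of $T'$ to the vertex of some $T_e$ that records ``which side of the wall'' the corresponding element of $\cP$ sits on. Recall that edges of $T'$ are in bijection with pairs $\{(\fa,\h),(\fa,\h)^*\}\subseteq\cP$, and the involution always takes $\h\mapsto\h^*$; so an edge $\varepsilon\in ET'$ determines a well-defined halfspace-pair $\{\h,\h^*\}$ of $X$, hence a well-defined edge $e=e(\varepsilon)\in ET$ (the one with $p(\h)$ bounded by $\hat e$). This gives a $G$-equivariant map $ET'\to ET$, and I want to refine it into a map landing in $\sqcup_e VT_e$. First I would treat the two cases for the edge according to whether $e\in G\cdot e_0$ or not.

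If $e\notin G\cdot e_0$, then $T_e$ is a single vertex, and I send $\varepsilon$ to that vertex; there is at most one edge of $T'$ with a given image here (an edge $\varepsilon$ with $\h\in\cH^\#$ and $\hh\notin G\cdot\hh_0$ corresponds to the pair $\{(\h,\h),(\h^*,\h^*)\}$, which is uniquely determined by $\{\h,\h^*\}$), so injectivity over these vertices is immediate. The interesting case is $e\in G\cdot e_0$; by $G$-equivariance it suffices to analyze $e=e_0$, i.e. edges $\varepsilon$ whose halfspace-pair is $\{\h_0,\h_0^*\}$. Such an edge corresponds to a pair $\{([x],\h_0),([x]^*,\h_0^*)\}$ for some $[x]\in\cM_0$. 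Now recall $T_{e_0}=T_0$ is the cubing of $\cP_0$, whose vertices are DCC ultrafilters on $\cP_0$, and that the $\sim$-classes $[x]$ are exactly the nonempty fibers of the map $\lambda:V\h_0\to VT_0$ from Definition \ref{defn:wallspace}; explicitly, $\lambda$ sends $x$ to $\lambda(x)=\{D\in\cP_0\mid x\in D\}$, and $[x]=[y]$ iff $\lambda(x)=\lambda(y)$. So I would define $\tau(\varepsilon):=\lambda(x)\in VT_0=VT_{e_0}$, and extend $G$-equivariantly over the orbit of $e_0$. This is well-defined because it only depends on the $\sim$-class $[x]$, hence only on the unordered pair $\{([x],\h_0),([x]^*,\h_0^*)\}$, i.e. on $\varepsilon$.

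It remains to check injectivity and $G$-equivariance. $G$-equivariance holds by construction once I verify compatibility of the $G$-actions: the action of $G_0$ on $\cP_0$ and hence on $T_0$ is the restriction of the $G$-action on $\cP$, and $T_{e_0}=T_0\times\{G_0\}$ with $G_0$ acting on the first factor, so $g\cdot[x]=[gx]$ is sent correctly. For injectivity: $\tau$ restricted to edges with $e(\varepsilon)\notin G\cdot e_0$ is injective into the single-vertex $T_e$'s as noted, and these have image disjoint from the orbit of $T_{e_0}$'s vertices; $\tau$ restricted to edges with $e(\varepsilon)\in G\cdot e_0$ is injective because within the orbit of $e_0$ an edge is determined by its $\sim$-class, and $\lambda$ separates distinct $\sim$-classes by definition of $\sim$ (distinct classes are separated by some $D\in\cP_0$, so their $\lambda$-images differ). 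The main obstacle I anticipate is bookkeeping: making sure that the identification $[x]\mapsto\lambda(x)$ really is a bijection between $\cM_0$-classes (those appearing as edges of $T'$ over $e_0$) and the relevant vertices of $T_0$, and that no two edges of $T'$ over the same edge of $T$ collide — but distinctness of edges of $T'$ over $e_0$ is exactly distinctness of the pair $\{([x],\h_0),([x]^*,\h_0^*)\}$, which by Lemma \ref{lem:[x]leq[y]} forces distinct $[x]$, so this is clean. A minor point to confirm is that every such edge does correspond to a genuine element of $\cP$ of the form $([x],\h_0)$ with $[x]\in\cM_0$ (rather than, say, $\h_0\in\cH^\#$), which holds since $\hh_0\in G\cdot\hh_0$ so $\h_0\notin\cH^\#$.
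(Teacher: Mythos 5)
Your proof is correct and follows essentially the same approach as the paper: you identify edges of $T'$ with pairs in $\cP$, case-split on whether the associated $X$-halfspace lies in $\cH^\#$ or in the $G$-orbit of $\h_0$, use the map $\lambda:V\h_0\to VT_0$ and the fact that $\sim$-classes are its nonempty fibers, and deduce injectivity from $\lambda$ separating distinct $\sim$-classes together with Lemma \ref{lem:[x]leq[y]}. The only difference is presentational: the paper fixes a left transversal $\{g_i\}$ and writes the formula $\tau(e')=(\lambda(x),g_iG_0)$ explicitly, then verifies equivariance by computation, whereas you define $\tau$ over the orbit representative $e_0$ and invoke equivariant extension — which is fine, though one should spell out (as you gesture at) that the $G_0$-action on $T_0$ matches the restriction of the $G$-action so the extension is well-defined on stabilizers.
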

\begin{proof}
	Recall that we have a wallspace $(V\h_0,\cP_0)$ with cubing $T_0$, and an associated map $\lambda: V\h_0\to VT_0$. Moreover, it follows from the constructions that $\lambda$ is $G_0$-equivariant.
	We observed that the $\sim$-classes $[x]$ are precisely the non-empty fibers of $\lambda$, so in particular we get a well-defined injective $G_0$-equivariant map $\cM_0\to VT_0$ given by $[x]\mapsto\lambda(x)$. 
	
	Now recall the construction of the trees $T_e$ ($e\in ET$) in Subsection \ref{subsec:Te}, and the action of $G$ on $\sqcup_e T_e$.
	With $e_0$ being the edge corresponding to the halfspace $\h_0$, the tree $T_{e_0}$ is identified with the tree $T_0$.
	We now extend the map $\cM_0\to VT_0$ constructed above to a map $\tau:ET'\to\sqcup_e VT_e$.
	We identify the edges of $T'$ with pairs $\{(\fa,\h),(\fa,\h)^*\}\subseteq\cP$, so we have two types of edges to consider.
	\begin{itemize}
		\item For $\h\in\cH^\#$, we have an edge $e'=\{(V\h,\h),(V\h^*,\h^*)\}\in ET'$ and also an edge $e\in ET$.
		In this case $e\notin G\cdot e_0$, so the tree $T_e$ is just a single vertex, and we define $\tau(e')$ to be this vertex.
		The maps $\cH^\#\to ET$ and $\cH^\#\to ET'$ are $G$-equivariant, so $\tau(ge')=g\cdot\tau(e')$ for all $g\in G$.
		
		\item Recall that we have a left transversal $\{g_i\mid i\in\Omega\}$ of $G_0$ in $G$, so the second type of edge in $T'$ can be written in the form
		$$e'=\{(g_i[x],g_i\h_0),(g_i[x]^*,g_i\h_0^*)\},$$
		with $i\in\Omega$ and $[x]\in\cM_0$.
		We then define
		$$\tau(e'):=(\lambda(x),g_i G_0)\in T_0\times\{g_i G_0\}=:T_{g_i e_0}.$$
		To check $G$-equivariance, take $g\in G$, and let $gg_i=g_jg_0$ with $j\in\Omega$ and $g_0\in G_0$.
		The definition of $G\acts\sqcup_e T_e$ tells us that
		$$g\cdot(\lambda(x),g_i G_0)=(g_0\lambda(x),g_jG_0).$$
		Meanwhile,
		\begin{align*}
			\tau(ge')&=\tau(\{(gg_i[x],gg_i\h_0),(gg_i[x]^*,gg_i\h_0^*)\})\\
			&=\tau(\{(g_jg_0[x],g_jg_0\h_0),(g_jg_0[x]^*,g_jg_0\h_0^*)\})\\
			&=\tau(\{(g_j[g_0x],g_j\h_0),(g_j[g_0x]^*,g_j\h_0^*)\})\\
			&=(\lambda(g_0x),g_jG_0)\\
			&=(g_0\lambda(x),g_jG_0).
		\end{align*}
		So $\tau(ge')=g\cdot\tau(e')$, as required.
	\end{itemize}
	Finally, the map $\tau:ET'\to\sqcup_e VT_e$ is injective because the map $\cM_0\to VT_0$ given by $[x]\mapsto\lambda(x)$ is injective. 
\end{proof}

It follows from Lemma \ref{lem:ET'toVTe} and property \ref{item:Te} of Theorem \ref{thm:smallerstabs} that each edge stabilizer of $G\acts T'$ is equal to a vertex stabilizer from one of the splittings $G_e\acts T_e$.
In particular, the edge stabilizers of $G\acts T'$ are finitely generated by Lemma \ref{lem:fgvertex}.
Finally, property \ref{item:nofixT'} of Theorem \ref{thm:smallerstabs} is given by the following lemma.

\begin{lemma}
	If no vertex stabilizer of $T$ fixes an edge in $T$, then there is a vertex $v_0\in VT$ such that $G_{v_0}$ has no fixed point in $T'$.
\end{lemma}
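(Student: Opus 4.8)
The goal is to find a vertex $v_0 \in VT$ whose stabilizer $G_{v_0}$ has no fixed point in $T'$. The natural candidate is the vertex $v_0$ incident to $e_0$ that lies in the halfspace $\h_0$ (equivalently, $v_0$ is the endpoint of $e_0$ with $p^{-1}(v_0) \subseteq \h_0$), so that $G_{v_0} \leq G_{e_0} = G_0$. Since we are assuming no vertex stabilizer of $T$ fixes an edge of $T$, we know $G_{v_0}$ does not fix $e_0$, hence $G_{v_0} \cdot \h_0 \ne \h_0$ for the appropriate translates; more importantly $G_{v_0}$ acts on the tree $T_0$ (via the restriction of $G_0 \acts T_0$) and we should first check whether this action is fixed-point-free. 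I would argue that because $G_{v_0}$ acts cocompactly on the subgraph $p^{-1}(v_0)$, and the cut $\delta = \delta C$ together with its $G_0$-translates separate $V\h_0$ in a way that genuinely interacts with $p^{-1}(v_0)$ (recall $\delta \subseteq F$ avoids all proper subhalfspaces, so $\delta$ lies essentially in $p^{-1}(v_0)$), the loxodromic elements produced in the proof of Lemma \ref{lem:T0nofixed} can be taken inside $G_{v_0}$. Concretely, in that proof the elements $g_1, g_2$ were found using cocompactness of the $G_0$-action on $\hh_0$; here instead one uses cocompactness of the $G_{v_0}$-action on $p^{-1}(v_0)$ and the fact (from Lemma \ref{lem:caphhunbdd}) that translates of $\delta$ are "spread out" — this should yield $g \in G_{v_0}$ with $gC \subsetneq C$, hence $g$ acts loxodromically on $T_0$, so $G_{v_0}$ has no fixed point in $T_0$.

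Next I would transfer non-fixedness from $T_0$ to $T'$. Recall from Lemma \ref{lem:ET'toVTe} that there is an injective $G$-equivariant map $\tau: ET' \to \sqcup_e VT_e$, and that $T_{e_0}$ is identified with $T_0$, with the edges of $T'$ of the form $\{([x],\h_0),([x]^*,\h_0^*)\}$ mapping via $\tau$ to $\lambda(x) \in T_0$. The key point is that this correspondence is compatible with the $G_0$-actions: if $g \in G_0$ translates along an axis in $T_0$ passing through vertices $\lambda(x_i)$, then the corresponding edges $\{(g^i[x],g^i\h_0), \dots\}$ in $T'$ (for suitable choice of $[x]$) are pairwise distinct by injectivity of $\tau$, and they all lie "on the $\h_0$-side", so $g$ cannot fix a point of $T'$ — a fixed point would force a coherent choice of halfspaces of $X$ containing or contained in the $g^i\h_0$, which is impossible since the $g^i\h_0$ are nested (as $g$ is loxodromic on $T_0$) but with no consistent limit. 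More carefully: a fixed point $w \in VT'$ is a DCC ultrafilter on $\cP$; applying the map $\phi: VT' \to VT$ from the preceding lemma, $\phi(w)$ would be a $G_{v_0}$-fixed vertex of $T$, contradicting the hypothesis that $G_{v_0}$ (indeed any vertex stabilizer) fixes no edge — wait, we need $\phi(w)$ to give a contradiction, so actually the cleaner route is: if $G_{v_0}$ fixed $w \in VT'$, then $G_{v_0}$ would fix $\phi(w) \in VT$; but $\phi(w)$ need not be $v_0$, so this alone isn't a contradiction.

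So I would instead argue directly. Suppose $g \in G_{v_0}$ is the loxodromic element on $T_0$ found above, with $g C \subsetneq C$. Then the halfspaces $g^i \h_0$ of $X$ are all equal to $\h_0$ (since $g$ fixes $e_0$!), so this is subtler: $g$ fixes $\h_0$ but moves the $\sim$-classes, i.e. $g[x_i]$ form a properly nested chain $\cdots \subsetneq g^2[x_0] \subsetneq g[x_0] \subsetneq [x_0] \subsetneq \cdots$ inside $V\h_0$, equivalently in $\cP$ we have a bi-infinite properly descending-and-ascending chain $(g^i[x_0], \h_0)$. This chain is exactly the set of edges along an axis of $g$ in $T' = C(\cP)$: consecutive elements differ by the involution applied... no — they differ because $(g^{i+1}[x_0],\h_0) \leq (g^i[x_0],\h_0)$ are nested with the same halfspace coordinate, so they lie on a geodesic line in $T'$, and $g$ translates along it. Hence $g$ has no fixed point in $T'$, so $G_{v_0}$ has no fixed point in $T'$, completing the proof. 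I should also verify $[x_0] \in \cM_0$, i.e. $[x_0]$ is not in a bounded neighborhood of $\hh_0$ — this holds because $g^i[x_0]$ are properly nested and $g$ acts with bounded quotient on $\hh_0$, so $[x_0]$ must be "deep". The main obstacle is the first step: producing the loxodromic element inside $G_{v_0}$ rather than merely inside $G_0$; this requires a careful re-examination of the proof of Lemma \ref{lem:T0nofixed} using cocompactness of $G_{v_0} \acts p^{-1}(v_0)$ and the location of $\delta$ within $p^{-1}(v_0)$, and checking that $\delta$ genuinely separates $p^{-1}(v_0)$ into two unbounded-in-$\h_0$ pieces, which uses the no-edge-fixing hypothesis to rule out degenerate cases.

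\begin{proof}
	Let $v_0\in VT$ be the vertex incident to $e_0$ with $v_0\in p(\h_0)$, so $G_{v_0}\leq G_{e_0}=G_0$, and by hypothesis $G_{v_0}$ does not fix $e_0$, so $G_{v_0}\neq G_0$ in general but $G_{v_0}$ still acts on $\h_0$ preserving $\hh_0$ and $p^{-1}(v_0)$, cocompactly on the latter.

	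\textbf{Step 1: a loxodromic element of $G_{v_0}$ on $T_0$.}
	Recall $\delta=\delta C$ lies in the finite connected subgraph $F\subseteq\h_0$ that avoids all proper subhalfspaces; in particular $\delta\subseteq p^{-1}(v_0)$ up to bounded error, and $\delta$ meets $\hh_0$ by Lemma \ref{lem:caphhunbdd}. Since $G_{v_0}$ acts cocompactly on $p^{-1}(v_0)$, and since $C\cap V\hh_0$ and $V\hh_0-C$ are both unbounded (Lemma \ref{lem:caphhunbdd}), we can find $g_1,g_2\in G_{v_0}$ such that $g_1\delta$ is contained in the induced subgraph of $C$ and $g_2\delta$ is contained in the induced subgraph of $V\h_0-C$: indeed, translating $\delta$ by elements of $G_{v_0}$ moves it along $\hh_0$ (as $G_{v_0}$ acts cocompactly on $\hh_0\cap p^{-1}(v_0)$, which is coarsely all of $\hh_0$), hitting both sides of the cut $C$. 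Exactly as in the proof of Lemma \ref{lem:T0nofixed}, this yields an element $g\in G_{v_0}$ with $gC\subsetneq C$ (possibly after replacing $g$ by $g^{-1}$ or composing $g_1,g_2$). Thus $g$ acts loxodromically on $T_0$, and in particular the sets $g^iC$ for $i\in\Z$ form a bi-infinite properly descending chain in $\cP_0$.

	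\textbf{Step 2: the $\sim$-class is in $\cM_0$.}
	Let $[x_0]\in\cM_0$ be a $\sim$-class with $gC\subsetneq[x_0]$ and $[x_0]\cap C\neq\emptyset$; such a class exists since $gC\subsetneq C$ and the $\sim$-classes partition $V\h_0$ refining the sides of every $g_0\delta$. We must check $[x_0]\in\cM_0$, i.e.\ $[x_0]$ is not contained in a bounded neighborhood of $\hh_0$. The elements $g^i[x_0]$ ($i\in\Z$) are pairwise distinct $\sim$-classes with $g^{i+1}[x_0]\subseteq g^i[x_0]$ properly, since $g$ is loxodromic on $T_0=C(\cP_0)$. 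If $[x_0]$ were within a bounded neighborhood of $\hh_0$, then since $g\in G_{v_0}$ acts cocompactly on $\hh_0$, all $g^i[x_0]$ would lie within a uniformly bounded neighborhood of $\hh_0$; but $\hh_0$ is connected and $g^i[x_0]$ would then have to be eventually nested inside a bounded region, contradicting that they are pairwise distinct and properly nested. Hence $[x_0]\in\cM_0$.

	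\textbf{Step 3: no fixed point in $T'$.}
	Since $g$ fixes the halfspace $\h_0$ of $X$ (as $g\in G_{v_0}\leq G_{e_0}$ fixes $e_0$), the pairs $(g^i[x_0],\h_0)\in\cP$ for $i\in\Z$ all have the same halfspace coordinate $\h_0$, and by Step 1 and Step 2 they satisfy
	\begin{equation*}
		\cdots\leq(g^2[x_0],\h_0)\leq(g[x_0],\h_0)\leq([x_0],\h_0)\leq(g^{-1}[x_0],\h_0)\leq\cdots,
	\end{equation*}
	with all inequalities strict (of type \ref{item:preceq}, since $g^{i+1}[x_0]\subsetneq g^i[x_0]$ and $\h_0\subseteq\h_0$). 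This bi-infinite properly ascending-descending chain in $\cP$ corresponds to a bi-infinite geodesic in $T'=C(\cP)$, and $g$ acts on this geodesic by translation by one edge. In particular $g$ acts loxodromically on $T'$, so $g$ has no fixed point in $T'$. Since $g\in G_{v_0}$, the group $G_{v_0}$ has no fixed point in $T'$, as required.
\end{proof}
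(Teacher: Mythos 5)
Your proof has a fundamental error that makes the whole strategy unworkable, and it also fails to use the hypothesis in the way it was intended.

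\textbf{The inclusion is backwards.} In Step 3 you write ``$g\in G_{v_0}\leq G_{e_0}$ fixes $e_0$.'' But $G_{e_0}\leq G_{v_0}$, not the other way around: the edge stabilizer is contained in the stabilizers of both incident vertices. Indeed, the hypothesis ``no vertex stabilizer of $T$ fixes an edge'' is \emph{precisely} the statement that $G_{v_0}\not\leq G_{e_0}$, i.e.\ that there exists $g\in G_{v_0}-G_{e_0}$. Your proof never produces such an element, and as a consequence the hypothesis plays no essential role in your argument — a red flag, since the statement is certainly false without it (e.g.\ if $G_{v_0}=G_{e_0}=G_0$, then $G_{v_0}$ could easily fix a vertex in $T'$ that lies ``between'' the edges $e_{[x]}$). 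The element $g$ you try to construct in Step~1 must preserve $\h_0$ in order for $gC\subsetneq C$ to make sense as an inclusion of subsets of $V\h_0$, so it automatically lies in $G_0=G_{e_0}$; Step 1 therefore reproduces Lemma \ref{lem:T0nofixed} but does not engage the new hypothesis at all.

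\textbf{Step 3 is false by Lemma \ref{lem:[x]leq[y]}.} You claim a bi-infinite chain $\cdots\leq(g^2[x_0],\h_0)\leq(g[x_0],\h_0)\leq([x_0],\h_0)\leq\cdots$ with $g^{i+1}[x_0]\subsetneq g^i[x_0]$. But the $\sim$-classes are pairwise \emph{disjoint} (they are equivalence classes), so they are never properly nested. More to the point, Lemma \ref{lem:[x]leq[y]} says explicitly that for distinct $[x],[y]\in\cM_0$ we do \emph{not} have $([x],\h_0)\leq([y],\h_0)$. So your chain consists of pairwise incomparable elements of $\cP$, and a loxodromic $g\in G_0$ on $T_0$ does not obviously translate an axis of $T'$ — the edges $e_{[x]}$ (for $[x]\in\cM_0$) form a ``star'' in $T'$, with all pairs pointing away from each other, and $G_0$ permutes the branches of this star without a priori leaving the central region. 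The paper's proof deals with this correctly: it takes $g\in G_{v_0}-G_{e_0}$ (so $g\h_0^*\subsetneq\h_0$), combines Lemma~\ref{lem:[y]*preceq} applied to $g$ with Lemma \ref{lem:[x]leq[y]} applied to a suitable $g_0\in G_0$, and produces the genuine nesting $gg_0\cdot([x],\h_0)\leq([x],\h_0)$ for $gg_0\in G_{v_0}$. The key mechanism is that $g$ \emph{moves} the halfspace $\h_0$ of $X$ — your argument assumes it fixes $\h_0$ and therefore cannot produce a strict nesting.

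Also, the cocompactness claim in Step 1 is incorrect: $G_{v_0}$ does not act on $\hh_0$ (elements of $G_{v_0}-G_{e_0}$ move $\hh_0$), and $\hh_0\cap p^{-1}(v_0)$ is not ``coarsely all of $\hh_0$'' (these sets are disjoint, since $p$ is simplicial on the barycentric subdivision). The group acting cocompactly on $\hh_0$ is $G_{e_0}=G_0$, and Lemma \ref{lem:T0nofixed} already supplies the loxodromic element of $G_0$ on $T_0$ using that cocompactness; there is nothing further to gain there.
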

\begin{proof}
	Recall that $e_0\in ET$ is the edge corresponding to the halfspace $\h_0$, in particular $G_0=G_{e_0}$.
	The vertex $v_0\in VT$ will be the endpoint of $e_0$ that is contained in the halfspace $p(\h_0)\in\cH(T)$.
	The vertex stabilizer $G_{v_0}$ fixes no edges in $T$, so there exists $g\in G_{v_0}-G_{e_0}$.
	Then $g\h_0^*\subsetneq\h_0$.
	By Lemma \ref{lem:[y]*preceq}, there exist $[x],[y]\in\cM_0$ with $g[y]^*\preceq[x]$, so $(g[y]^*,g\h_0^*)\leq([x],\h_0)$ in $\cP$.
	Recall that we have a $G_0$-equivariant map $\cM_0\to T_0$ (as in the proof of Lemma \ref{lem:ET'toVTe}), and $G_0$ has no fixed point in $T_0$ by Lemma \ref{lem:T0nofixed}, so the $G_0$-orbits in $\cM_0$ are all infinite.
	In particular, there exists $g_0\in G_0$ with $g_0[x]\neq[y]$.
	Then $(g_0[x],\h_0)\leq([y]^*,\h_0^*)$ by Lemma \ref{lem:[x]leq[y]}.
	Putting this all together, we get
	$$gg_0\cdot([x],\h_0)=(gg_0[x],g\h_0)\leq(g[y]^*,g\h_0^*)\leq([x],\h_0).$$
	Therefore $gg_0\in G_{v_0}$ acts hyperbolically on $T'$, translating along the axis that contains the edges corresponding to $\{(gg_0)^i\cdot([x],\h_0),(gg_0)^i\cdot([x]^*,\h_0^*)\}$ for $i\in\Z$.
	As a result, $G_{v_0}$ has no fixed point in $T'$.
\end{proof}

\section{Groups $G$ that are simply connected at infinity; have $H^2(G,\mathbb ZG)=\{0\}$ }\label{sec:scatinfty}
We begin this section with basic definitions. The notion of simple connectivity at infinity for groups and spaces is a classical and well studied notion. We use \cite{Geoghegan08} as a basic reference.

\begin{definition} 
The space $X$ is {\it simply connected at infinity}  if for any compact set $C$ there is a compact set $D$ such that loops $\alpha:[0,1]\to X-D$ are homotopically trivial in $X-C$. This means there is a homotopy $H:[0,1]\times [0,1]\to X-C$ such that $H(t,0)=\alpha(t)$, $H(0,t)=H(1,t)=H(t,1)=H(0,0)$ for all $t\in [0,1]$.
\end{definition}

\begin{definition} 
The space $X$ is {\it 1-acyclic at infinity} (has {\it pro-finite first homology at infinity}) if for any compact set $C$ there is a compact set $D$ such that the image of $H_1(X-D)$ in $H_1(X-C)$ under the homomorphism induced by the inclusion of $X-D$ into $X-C$ is  trivial (respectively, finite).  
A loop $\alpha$ in $X-C$ is homologically trivial if there is an orientable 2-manifold $M$ bounded by an embedded loop $\alpha'$ and a continuous map $H:M\to X-C$ such that $H$ restricted to $\alpha'$ maps (in the obvious way) to $\alpha$ (see Proposition 12.8, \cite{GH81}).
\end{definition}

\begin{definition}\label{n-equiv} 
A (proper) cellular map $f:X\to Y$ between CW-complexes is a {\it (proper) $k$-equivalence} if there is a (proper) cellular map $g:Y\to X$ such that $gf:X^{k-1}\to X$ is (properly) homotopic to the inclusion $X^{k-1}\to X$ and $fg:Y^{k-1}\to Y$ is (properly) homotopic to the inclusion $Y^{k-1}\to Y$.  
\end{definition}

\begin{theorem} [See the first paragraph of \S 16.5,\cite{Geoghegan08}] \label{P1E} 
Suppose $X$ and $Y$ are finite connected CW-complexes with $\pi_1(X)$ isomorphic to $\pi_1(Y)$. Then there is a 2-equivalence between $X$ and $Y$, and hence (by lifting) a proper 2-equivalence between the 2-skeletons  of the universal covers of $X$ and $Y$.
\end{theorem}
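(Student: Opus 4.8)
To establish Theorem~\ref{P1E} the plan is to reduce to presentation complexes and then build the equivalence by hand. The first step is the reduction: I would show that every finite connected CW-complex $Z$ is 2-equivalent to a finite presentation complex of $\pi_1(Z)$. Collapsing a maximal tree in $Z^1$ is a homotopy equivalence, hence a 2-equivalence (and 2-equivalence is transitive, as one checks directly from Definition~\ref{n-equiv}), so we may assume $Z$ has a single $0$-cell; then $Z^{(2)}$ is literally the presentation complex of the presentation of $\pi_1(Z)$ with one generator per $1$-cell and one relator per $2$-cell. I would then argue that the inclusion $Z^{(2)}\hookrightarrow Z$ is a 2-equivalence: by Definition~\ref{n-equiv} the maps involved are constrained only on $1$-skeleta, so it is enough to produce a cellular map $Z\to Z^{(2)}$ restricting to the inclusion on $Z^1$, i.e. to extend $\mathrm{id}_{Z^{(2)}}$ over the cells of $Z$ of dimension $\ge 3$. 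This last point is the technical heart and is the part I would quote from \cite{Geoghegan08}. With it in hand, $X$ and $Y$ become finite presentation $2$-complexes of two presentations of $G:=\pi_1(X)\cong\pi_1(Y)$.

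For this reduced case I would construct the 2-equivalence directly from the isomorphism $\phi\colon G\to G$ coming from the identifications of $\pi_1(X)$ and $\pi_1(Y)$ with $G$. Define $f\colon X\to Y$ on $X^{(1)}$ by sending the circle labelled by a generator $s$ to a loop in $Y^{(1)}$ spelling a word that represents $\phi(s)\in\pi_1(Y)$; each relator of $X$ then maps to a loop representing $\phi(1)=1$, which is null-homotopic in $Y$, so $f$ extends over every $2$-cell and, $X$ being $2$-dimensional, over all of $X$. Build $g\colon Y\to X$ symmetrically from $\phi^{-1}$. Then $gf|_{X^{(1)}}$ sends the circle labelled $s$ to a loop representing $\phi^{-1}(\phi(s))=s\in\pi_1(X)$, the same class as the inclusion $X^{(1)}\hookrightarrow X$; since a based map out of a wedge of circles is determined up to based homotopy by its effect on $\pi_1$, this gives $gf|_{X^{(1)}}\simeq\mathrm{incl}$, and symmetrically $fg|_{Y^{(1)}}\simeq\mathrm{incl}$. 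Hence $f$ is a 2-equivalence; in particular $f_*\colon\pi_1(X)\to\pi_1(Y)$ is an isomorphism, being inverse to $g_*$.

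For the final clause about universal covers I would proceed as follows. As $f_*$ is an isomorphism, $f$ lifts to an $f_*$-equivariant cellular map $\tilde f\colon\tilde X\to\tilde Y$ of universal covers, and likewise $g$ lifts to $\tilde g\colon\tilde Y\to\tilde X$; restriction gives maps $\tilde X^{(2)}\to\tilde Y^{(2)}$ and $\tilde Y^{(2)}\to\tilde X^{(2)}$. After cellularly approximating the homotopy $gf|_{X^1}\simeq\mathrm{incl}$ rel endpoints I may assume it has image in $X^{(2)}$ (its domain $X^1\times[0,1]$ is $2$-dimensional), and lifting it---starting from $\tilde g\tilde f|_{\tilde X^{(1)}}$ and adjusting $\tilde g$ by a deck transformation so the other endpoint is the genuine inclusion---yields a homotopy $\tilde g\tilde f|_{\tilde X^{(1)}}\simeq\mathrm{incl}_{\tilde X^{(1)}\hookrightarrow\tilde X^{(2)}}$; symmetrically over $Y$. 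Properness of $\tilde f$, $\tilde g$ and of these homotopies is forced by finiteness of $X$ and $Y$, since the maps and homotopy tracks then have uniformly bounded length: a point of $\tilde X^{(1)}$ whose homotopy track meets a given compact set lies within a bounded distance of it (using that one endpoint of the track is the inclusion), so preimages of compacta are compact. This produces the proper 2-equivalence $\tilde X^{(2)}\to\tilde Y^{(2)}$. I expect the main obstacle to be the very first step---that a finite complex is 2-equivalent to a presentation complex of its fundamental group---as that is where the cells of dimension $\ge 3$ must be controlled.
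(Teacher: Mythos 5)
The theorem you are proving is quoted by the paper without proof (it is a literature citation to Geoghegan), so there is no in-paper argument to compare against; I will therefore assess your sketch on its own terms. Your treatment of the presentation-complex case is correct: the two maps $f$ and $g$ built from word representatives of $\phi(s)$ and $\phi^{-1}(t)$ are cellular after cellular approximation, and the identification of $gf|_{X^{(1)}}$ with the inclusion up to based homotopy (because maps from a wedge of circles into a path-connected space are classified by their $\pi_1$-homomorphism) is the right argument. Your lifting step and the properness-by-cocompactness observation are also fine.

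The genuine gap is exactly where you suspect, but the specific mechanism you propose for it is wrong, not just incomplete. You suggest showing that $Z^{(2)}\hookrightarrow Z$ is a 2-equivalence by ``extending $\mathrm{id}_{Z^{(2)}}$ over the cells of $Z$ of dimension $\ge 3$.'' This extension does not exist in general: the attaching map of a 3-cell is a map $S^2\to Z^{(2)}$ that becomes null-homotopic in $Z^{(3)}$, but need not be null-homotopic in $Z^{(2)}$ (it represents a possibly nontrivial class in $\pi_2(Z^{(2)})$ that the 3-cell is there to kill). So if you fix $g|_{Z^{(2)}}=\mathrm{id}$, the obstruction in $\pi_2(Z^{(2)})$ may be nonzero and the map $g$ you want cannot be built this way. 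The definition of $2$-equivalence used in the paper (Definition~\ref{n-equiv}) only forces $g|_{Z^{(1)}}$ to be \emph{homotopic} to the inclusion, not equal to it, so one does have more room to manoeuvre on $Z^{(2)}$ than your phrasing allows; but establishing that a suitable $g\colon Z\to Z^{(2)}$ exists for an arbitrary finite $Z$ is precisely the nontrivial content of the cited result, and it is not a matter of a direct cell-by-cell extension starting from the identity. Since you defer to \cite{Geoghegan08} at this point anyway, the overall plan is salvageable, but the one-sentence justification you give for the reduction step would not survive scrutiny and should be replaced by a correct reference or argument (e.g. the standard reduction via Tietze transformations between finite presentations, each of which induces a 2-equivalence of presentation complexes, or the obstruction-theoretic argument from Geoghegan's book).
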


\begin{theorem} [Proposition 16.2.3 and 16.1.14, \cite{Geoghegan08}] \label{2-equiv}
If $X$ and $Y$ are locally finite, connected CW-complexes and there is a proper 2-equivalence $f:X\to Y$, then 
$X$ is simply connected at infinity, 1-acyclic at infinity, or has pro-finite first homology at infinity if and only if $Y$ does as well.
\end{theorem}

\begin{definition} \label{SCIgp}
If $G$ is a finitely presented group and $X$ is some (equivalently any) %- see Proposition 16.2.3 and Theorem 18.2.11 of \cite{Geoghegan08})
connected finite complex with $\pi_1(X)$ isomorphic to $G$, then $G$ is {\it simply connected at infinity}, is {\it 1-acyclic at infinity}, or has {\it pro-finite first homology at infinity} if the same holds for the universal cover of $X$. 
\end{definition}

The next result follows directly from parts (ii) and (iii) of the main corollary of \cite{GeogheganMihalik86} in the special case with $n=2$.  In particular, if the finitely presented group $G$ is simply connected at infinity, then it is 1-acyclic at infinity and if $G$ is 1-acyclic at infinity, then the first homology at infinity of $G$ is pro-finite. 

\begin{theorem} [\cite{GeogheganMihalik86}] \label{GMpro} If $G$ is a finitely presented group, then $H^2(G,\mathbb ZG)=\{0\}$ if and only if the first homology at infinity of $G$ is pro-finite. 
\end{theorem}

Finally we mention the following:
\begin{theorem} [Theorem 18.2.11, \cite{Geoghegan08}] \label{pnequiv-qi}
If $G$ and $H$ are finitely presented quasi-isometric groups then $G$ is simply connected at infinity, is 1-acyclic at infinity, or has pro-finite first homology at infinity if and only if $H$ does as well. 
\end{theorem}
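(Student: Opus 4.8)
The plan is to reduce the statement entirely to the combination of results already recorded earlier in this section, so the argument is largely a matter of chaining together the right invariance theorems. First I would recall that the property of being simply connected at infinity (or 1-acyclic at infinity, or having pro-finite first homology at infinity) is a property of the universal cover of a finite $K(\pi,1)$ (or, more precisely, of any finite complex with the right fundamental group, by Definition \ref{SCIgp} together with Theorem \ref{P1E} and Theorem \ref{2-equiv}), so it is enough to work with universal covers of finite complexes. Concretely, pick finite connected CW-complexes $X$ and $Y$ with $\pi_1(X)\cong G$ and $\pi_1(Y)\cong H$; by Definition \ref{SCIgp} the relevant property for $G$ is the corresponding property for $\widetilde{X}$, and likewise for $H$ and $\widetilde{Y}$.

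The key step is to upgrade the quasi-isometry $G\to H$ to a statement about the spaces $\widetilde X$ and $\widetilde Y$. Since $X$ and $Y$ are finite complexes, the Milnor--\v{S}varc lemma tells us that $\widetilde X$ and $\widetilde Y$ (with their CW/path metrics) are quasi-isometric to $G$ and $H$ respectively, hence to each other. I would then invoke the fact that a quasi-isometry between universal covers of finite complexes can be promoted to a proper homotopy equivalence at the level of 2-skeletons — this is exactly the content underlying Theorem 18.2.11 of \cite{Geoghegan08}; the point is that the fundamental-group-at-infinity and first-homology-at-infinity pro-systems are quasi-isometry invariants of such spaces. Having produced a proper 2-equivalence $\widetilde X^{(2)}\to\widetilde Y^{(2)}$, Theorem \ref{2-equiv} then transfers each of the three ``at infinity'' properties between $\widetilde X$ and $\widetilde Y$, and unwinding Definition \ref{SCIgp} transfers them between $G$ and $H$.

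The main obstacle is the middle step: making precise the passage from a mere quasi-isometry of metric spaces to a proper 2-equivalence of 2-complexes, since a quasi-isometry is not a map in the CW category and need not be continuous, let alone cellular. The standard fix is to replace the quasi-isometry by a nearby cellular map using the simple connectivity of $\widetilde X$ and $\widetilde Y$ (one builds the map skeleton by skeleton, filling loops and then 2-spheres, using that the relevant homotopy groups vanish and that the complexes are uniformly locally finite so the filling can be done with bounded complexity), and to check that the resulting map is proper and admits a coarse inverse with the required homotopies on 1-skeleta — this is precisely the machinery behind \cite[Theorem 18.2.11]{Geoghegan08}, which we may simply cite. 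Everything else is bookkeeping with the definitions. (Strictly speaking, since the statement we are asked to prove \emph{is} Theorem 18.2.11 of \cite{Geoghegan08}, the ``proof'' here is just the observation that it follows from that reference together with the reductions above; no new argument is needed.)
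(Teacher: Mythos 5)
The paper offers no proof of this statement: it is cited verbatim as Theorem 18.2.11 of \cite{Geoghegan08}. Your proposal correctly identifies this, and the surrounding sketch (reduce to universal covers of finite complexes via Definition \ref{SCIgp}, use Milnor--\v{S}varc, upgrade the quasi-isometry to a proper 2-equivalence of 2-skeletons, then invoke Theorem \ref{2-equiv}) is an accurate description of the machinery behind the cited result, so the proposal is consistent with the paper's treatment.
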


\section {Proof of Theorem \ref{GeoSplit}}\label{sec:acyclicthm} 

\theoremstyle{plain}
\newtheorem*{GeoSplit}{Theorem \ref{GeoSplit}}
\begin{GeoSplit}
	Suppose $X$ is a locally finite CW-complex, and $X_1$ and $X_2$ are connected one-ended subcomplexes of $X$ such that $X_1\cup X_2=X$. If $K$ is a finite subcomplex of $X$ (possibly empty) such that  $(X_1\cap X_2)-K$ has more than one unbounded component, then $X$ does not have pro-finite first homology at infinity. In particular, $X$ is not simply connected at infinity.
\end{GeoSplit}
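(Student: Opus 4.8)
The plan is to exhibit, for a suitable compact set $C$, an arbitrarily large loop that is non-trivial in first homology of $X - C$ with $\mathbb{Z}$-coefficients (or at least has infinite order), which contradicts pro-finite first homology at infinity. The key idea is that a loop crossing the ``bottleneck'' $X_1 \cap X_2$ an odd number of times, while living far out near infinity, cannot bound in $X - C$ because the two one-ended subcomplexes $X_1$ and $X_2$ each have only one end, so a chain filling the loop is forced to pass through a controlled region near $K$.

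First I would set up the topology more carefully. Enlarge $K$ to a connected finite subcomplex $K'$ so that $X_1 - K'$ and $X_2 - K'$ each have exactly one unbounded component (possible since $X_1$ and $X_2$ are one-ended), while $(X_1 \cap X_2) - K'$ still has (at least) two unbounded components $E, E'$. Pick deep basepoints $x \in E$ and $x' \in E'$. Since the unbounded component of $X_1 - K'$ is connected and contains (the deep parts of) $E$ and $E'$, choose a path $\alpha_1$ in $X_1 - K'$ from $x$ to $x'$; similarly a path $\alpha_2$ in $X_2 - K'$ from $x'$ to $x$. Let $\gamma = \alpha_1 \cdot \alpha_2$, a loop in $X - K'$. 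Now take $C$ to be any compact set; after pushing the construction farther out (replacing $K'$ by a larger finite complex and choosing $E, E'$ and the paths to avoid $C$) we may assume $\gamma$ lies in $X - C$ and moreover that the ``return route'' through $K'$ that a filling would need is disjoint from $C$ as well.

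The heart of the argument is showing $[\gamma] \neq 0$ in $H_1(X - C)$ — indeed that it has infinite order, which is what pro-finite first homology at infinity forbids. I would argue by a Mayer–Vietoris / intersection-number computation: define a ``crossing number'' of a $1$-cycle in $X - C$ against the separating set $X_1 \cap X_2$, made rigorous either via the Mayer–Vietoris boundary map $H_1(X) \to H_0$ of a decomposition adapted to the components of $(X_1 \cap X_2) - K'$, or via a cohomology class in $H^1$ dual to the wall. Concretely: the two unbounded components $E, E'$ of $(X_1 \cap X_2) - K'$ separate the unbounded parts of $X_1 - K'$ and $X_2 - K'$ in a way that lets one define a homomorphism $\phi : H_1(X - C) \to \mathbb{Z}$ counting signed crossings of a cycle through $E$ minus through $E'$; by construction $\phi(\gamma) = \pm 1$ (the loop crosses once through $E$ on the $\alpha_1$-side and once through $E'$ on the $\alpha_2$-side, or some odd total), so $[\gamma]$ has infinite order. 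Then $n\gamma$ (taking $n$ parallel copies pushed out near infinity) gives, for the fixed $C$, elements of $H_1(X - D)$ for $D$ arbitrarily large whose images in $H_1(X - C)$ generate an infinite subgroup, contradicting the pro-finite condition; in particular $X$ is not simply connected at infinity by the implications recorded after Theorem \ref{GMpro}.

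The main obstacle is making the ``crossing number'' homomorphism $\phi$ genuinely well-defined on $H_1(X - C)$ and checking it is non-trivial on $\gamma$ — i.e. verifying that one really cannot homologically fill $\gamma$ while staying outside $C$. This requires using one-endedness of \emph{both} $X_1$ and $X_2$ in an essential way: a $2$-chain in $X - C$ bounding $\gamma$ restricts to chains in $X_1 - C$ and $X_2 - C$, and the one-ended hypothesis on each is what pins the crossing pattern and prevents the two contributions from cancelling without passing through the (now bounded, hence forbidden if pushed past $C$) region near $K'$. Technically this is cleanest via a relative Mayer–Vietoris sequence for $(X-C; X_1-C, X_2-C)$ together with the fact that $H_1^{\mathrm{lf}}$ or the pro-homology of a one-ended complex has the expected vanishing on the relevant inverse system; I would isolate this as the one nontrivial lemma and then the rest is bookkeeping about choosing $K'$, $C$, $D$ far enough out.
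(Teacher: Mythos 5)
Your loop $\gamma = \alpha_1\cdot\alpha_2$ --- arcs $\alpha_i\subset X_i$ joining basepoints chosen far out in two different unbounded components of $(X_1\cap X_2)-K$ --- is the same loop $(\alpha,\beta)$ that the paper constructs. Where you diverge is in certifying that $[\gamma]$ has infinite order in $H_1(X-C)$. You propose building a ``crossing number'' homomorphism $\phi:H_1(X-C)\to\mathbb{Z}$ via the Mayer--Vietoris connecting map or a dual $H^1$-class. The paper instead argues by contradiction: assuming some power $\gamma^n$ bounds, it takes a compact orientable surface $M$ with a filling map $H:M\to X-K$, uses Tietze extension to produce $f:M\to S^1$ collapsing $H^{-1}(W)$ to $\bar w$ and $H^{-1}(X_0-W)$ to $\bar v$, and notes that $\partial M$ is a product of commutators in $\pi_1(M)$, so $f_*[\partial M]=0$ in $\pi_1(S^1)=\mathbb{Z}$, while the explicit form of $f$ on $\partial M$ gives $f_*[\partial M]=n$. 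The surface-plus-Tietze device is exactly the paper's way of realizing your $\phi$ on a compact manifold, thereby sidestepping the point-set trouble your Mayer--Vietoris route faces: $X_1-C$ and $X_2-C$ are not open in $X-C$ (their interiors need not cover), and $X-C$ is not a CW complex, so excision for the couple $\{X_1-C,X_2-C\}$ needs its own justification.

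Two corrections to your sketch. First, if you do grant the Mayer--Vietoris sequence, the computation $\partial[\gamma]=[x']-[x]\neq 0$ in $H_0(X_0-C)$ is immediate, because $\partial\alpha_1=x'-x$ as a $0$-chain regardless of how many times $\alpha_1$ passes through $X_0$; there is no ``odd total'' to track, and one-endedness of $X_1,X_2$ plays \emph{no} role in showing $\phi(\gamma)\neq 0$. You have misplaced that hypothesis: it is needed only to build $\gamma$ in the first place --- to guarantee that for any compact $D$ the basepoints lie in the (unique) unbounded component of each $X_i-D$ and so can be joined by arcs $\alpha_i\subset X_i-D$ --- which is what produces, for a fixed $C$, loops in $X-D$ for arbitrarily large $D$ with infinite image in $H_1(X-C)$. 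Second, you correctly flag that well-definedness of $\phi$ is the missing lemma, but the appeal to $H_1^{\mathrm{lf}}$ or pro-homology vanishing of the one-ended pieces is not the right tool for it; the paper's commutator-in-$\pi_1(M)$ argument is what closes precisely this gap, and does so more economically than a thickened Mayer--Vietoris would.
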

\begin{proof}
	Assume for contradiction that $X$ has pro-finite first homology at infinity. Let $X_0=X_1\cap X_2$. 
	Let $D$ be a finite subcomplex of $X$ containing $K$ such that for any loop $\tau$ in $X-D$, there is $n>0$ such that $\tau^n$ is homologically trivial in $X-K$. Since  both $X_1$ and $X_2$ are one-ended, there is a finite subcomplex $E$ of $X$ containing $D$ such that any two vertices of $X_0-E$ can be joined by an edge path in $X_1-D$ and by an edge path in $X_2-D$. 
	Choose vertices $v$ and $w$ in $X_0-E$ such that $v$ and $w$ cannot be joined by an edge path in $X_0-K$ (so $v\in V$ and $w\in W$ where $V$ and $W$ are distinct components of $X_0-K$). Let $\alpha$ be an edge path in $X_1-D$ from $v$ to $w$ and let $\beta$ be an edge path in $X_2-D$ from $w$ to $v$. 
	The concatenation, $\alpha*\beta$, is a loop in $X-D$. By our earlier assumption, $(\alpha*\beta)^n$ is homologically trivial in $X-K$. Let $M$ be an orientable 2-manifold bounded by an embedded loop $\alpha'_1*\beta_1'*\ldots* \alpha_n'*\beta_n'$ and let $H:M\to X-K$ be a continuous map such that $H$ restricted to $\alpha_i'$ and $\beta_i'$ maps (in the obvious way) to the edge paths $\alpha$ and $\beta$ respectively. Let $v_i'$ and $w_i'$ be the initial and terminal points of $\alpha_i'$ (See Figure \ref{Fig1}).
	
	\begin{figure}
		\vbox to 3in{\vspace {-2in} \hspace {-.3in}
			\hspace{-1 in}
			\includegraphics[scale=1]{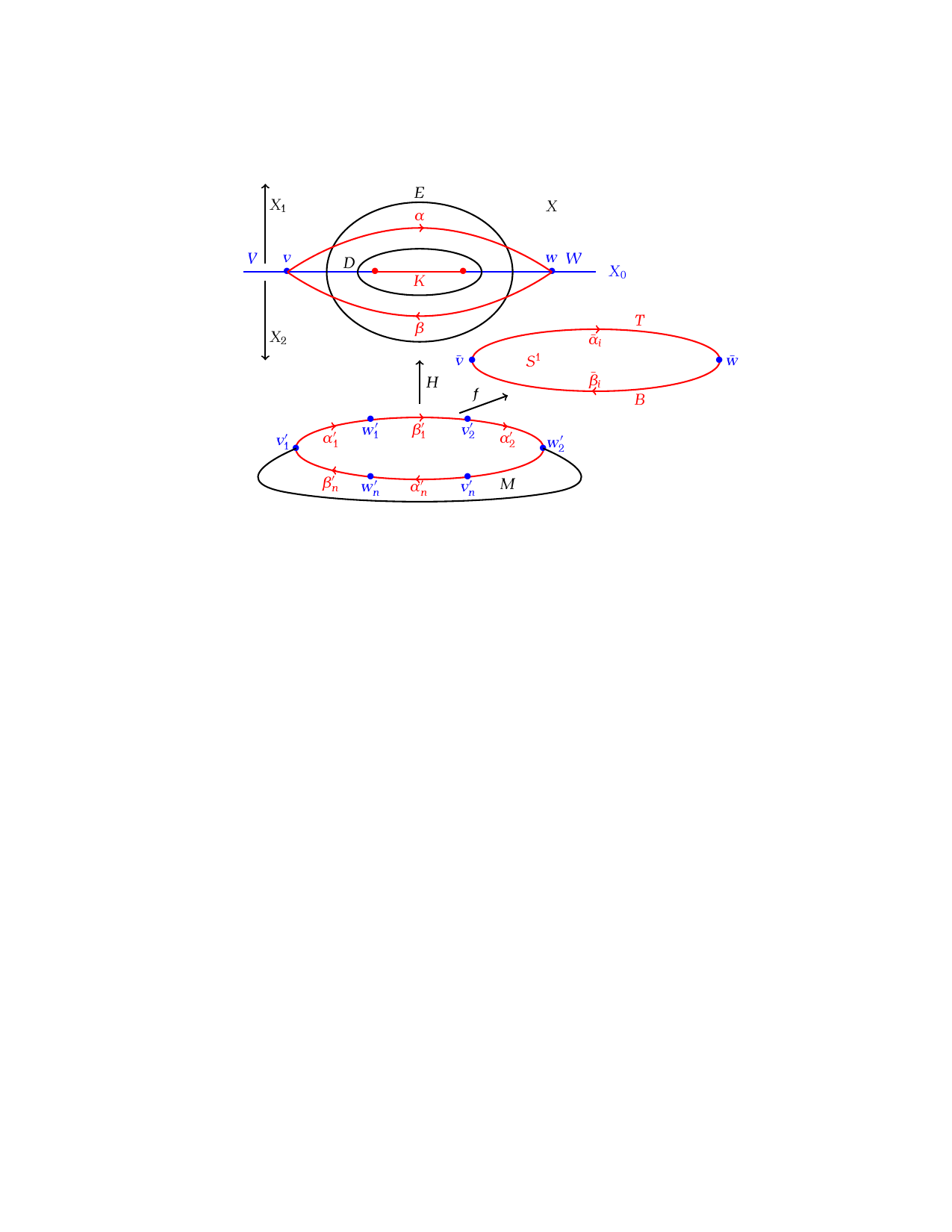}
			\vss }
		\vspace{-.5in}
		\caption{The Relevant Maps}
		\label{Fig1}
	\end{figure}

	\begin{lemma} \label{VW} The sets $H^{-1}(W)$ and $H^{-1}(X_0-W)$ are disjoint closed subsets of $M$ containing $\{w_1',\ldots  ,w_m'\}$ and $\{v_1',\ldots, v_n'\}$ respectively. 
	\end{lemma}
	\begin{proof}
		First note that $H^{-1}(X_0)$ is a closed subset of $M$.  As $X_0$ is locally connected, each component of $X_0-K$ is open in $X_0$. In particular, $X_0-W$ is closed in $X_0$ and hence closed in $X$. Then $H^{-1}(X_0-W)$ is closed in $M$. The set $W\cup K$ is closed in $X_0$ and in $X$. As the image of $H$ misses $K$, $H^{-1}(W) =H^{-1}(W\cup K)$ and is closed in $M$. 
	\end{proof} 
	
	Consider the circle $S^1$ with diametrically opposite vertices $\bar v$ and $\bar w$. Let $T$ and $B$ be the two closed arcs in $S^1$ such that $T\cup B=S^1$ and $T\cap B=\{\bar v,\bar w\}$. Map the (disjoint) closed sets $H^{-1}(W)$ and $H^{-1} (X_0-W)$ in $H^{-1}(X_1)$ to $\{\bar w\}$ and $\{\bar v\}$ respectively. Extend this map by Tietze's extension theorem to a continuous map $f_1:H^{-1}(X_1)\to T$. Similarly define $f_2:H^{-1}(X_2)\to B$ (with $f_2(H^{-1}(W))=\{\bar w\}$ and $f_2(H^{-1}(X_0-W))=\{\bar v\}$). Since $f_1$ and $f_2$ agree on the closed set $H^{-1}(X_1)\cap H^{-1} (X_2)=H^{-1}(X_0)= H^{-1}(W)\cup H^{-1} (X_0-W)$, we have a continuous function $f:M\to S^1$ that agrees with $f_1$ and $f_2$ on $H^{-1}(X_1)$ and $H^{-1}(X_2)$ respectively. Note that for each $i$, $f\alpha_i'=f_1\alpha_i'=\bar \alpha_i$ is a path in $T$ from $\bar v$ to $\bar w$ and $f\beta_i'=f_2\beta_i'=\bar \beta_i$ is a path in $B$ from $\bar w$ to $\bar v$. Furthermore, for each $i$, the homotopy class of loop $(\bar \alpha_i, \bar \beta_i)$ at $\bar v$ generates $\pi_1(S_1,\bar v)=\mathbb Z$. The contradiction arises since the homotopy class of the loop $\alpha_1'* \beta_1'*\ldots*\alpha_n'*\beta_n'$ is a commutator in $\pi_1(M, v')$ and so maps to the trivial element under the homomorphism $f_\ast:\pi_1(M,v')\to \pi_1(S^1,\bar v)$. But instead this class is mapped by $f_\ast$ to  the class of $\bar \alpha_1*\bar \beta_1*\ldots* \bar \alpha_n* \bar \beta_n$, a non-trivial element of $\pi_1(S^1,\bar v)$. 
\end{proof}

We now prove Corollary \ref{Acyclic}.

\theoremstyle{plain}
\newtheorem*{Acyclic}{Corollary \ref{Acyclic}}
\begin{Acyclic}
	Let $G\acts T$ be a non-trivial splitting with minimal action, with $G$ one-ended and finitely presented. Suppose the edge stabilizers are finitely generated, and suppose there is some edge stabilizer $G_e$ with more than one end.
	If the two halfspaces of $G\acts T$ associated to $e$ are one-ended then $H^2(G,\mathbb ZG)\ne \{0\}$.
	\end{Acyclic}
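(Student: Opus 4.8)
The plan is to deduce this from Theorem \ref{GeoSplit} together with Theorem \ref{GMpro}. Since $G$ is finitely presented, fix a finite presentation $2$-complex $Y$ for $G$ with universal cover $\tilde Y$, a simply connected locally finite $2$-complex carrying a free cocompact $G$-action. By Theorems \ref{P1E} and \ref{2-equiv}, any locally finite CW-complex admitting a proper $2$-equivalence to $\tilde Y$ has pro-finite first homology at infinity if and only if $\tilde Y$ does, and by Theorem \ref{GMpro} this holds if and only if $H^2(G,\mathbb ZG)=\{0\}$. So it suffices to produce a single locally finite CW-complex $X$, proper $2$-equivalent to $\tilde Y$, that does not have pro-finite first homology at infinity; the idea is to take $X$ to be a two-dimensional analogue of a tree of spaces for $G\acts T$, with $X_1,X_2$ the two halfspaces associated to $e$, and then apply Theorem \ref{GeoSplit}.

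Concretely, I would build $X$ by modifying $\tilde Y$ in the same way Definition \ref{defn:treeofspaces} modifies a Cayley graph: applying a suitable version of \cite[Proposition 8.2]{MargolisShepherdStarkWoodhouse23} to $\tilde Y$ in place of the discrete group $G$ should give a locally finite CW-complex $X$ with a geometric $G$-action, proper $2$-equivalent to $\tilde Y$, equipped with a $G$-equivariant map $p\colon X\to T$ that is simplicial with respect to the first barycentric subdivision of $T$ and whose preimages of vertices and of edge midpoints are connected. (Equivalently, one could start from the one-dimensional tree of spaces of Definition \ref{defn:treeofspaces} and cone off all loops of length at most $R$; for $R$ large this complex is simply connected, since $G$ is finitely presented, and it is proper $2$-equivalent to $\tilde Y$ by Theorem \ref{P1E}.) Let $\h,\h^*\in\cH(T)$ be the halfspaces of $T$ bounded by $\hat e$, and put $X_1:=p^{-1}(\h)$, $X_2:=p^{-1}(\h^*)$ and $W:=X_1\cap X_2=p^{-1}(\hat e)$. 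Since $p$ is simplicial over the barycentric subdivision of $T$, every cell of $X$ maps into $\h$ or into $\h^*$, so $X=X_1\cup X_2$, and $X_1,X_2,W$ are connected subcomplexes by the tree-of-spaces condition. By construction $X_1$ and $X_2$ are models for the two halfspaces of $G\acts T$ associated to $e$; rerunning the proof of Lemma \ref{lem:independent} for this higher-dimensional tree of spaces (comparing $X$ with a one-dimensional model via a $G$-equivariant map and using that $G_e$ acts cocompactly near the wall) shows they are quasi-isometric to those halfspaces, hence one-ended by hypothesis. On the other hand $G_e$ acts geometrically on the connected locally finite complex $W$, so by the Milnor--Svarc lemma $W$ is quasi-isometric to $G_e$ and therefore has more than one end, and we may choose a finite subcomplex $K\subseteq W$ such that $W-K$ has at least two unbounded components.

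Applying Theorem \ref{GeoSplit} to $X=X_1\cup X_2$ with this $K$ then shows that $X$ does not have pro-finite first homology at infinity; since $X$ is proper $2$-equivalent to $\tilde Y$, the same is true of $\tilde Y$, that is, $G$ does not have pro-finite first homology at infinity, and so $H^2(G,\mathbb ZG)\ne\{0\}$ by Theorem \ref{GMpro}. The one place real care is needed is the construction in the previous paragraph: a genuinely two-dimensional complex is required, since a one-dimensional tree of spaces need not detect $H^2(G,\mathbb ZG)$ -- for example the Cayley graph of $\mathbb Z^3$ does not have pro-finite first homology at infinity even though $\mathbb Z^3$ does -- and one must check that the two-dimensional model is simultaneously proper $2$-equivalent to $\tilde Y$ and decomposes over $T$ with connected, one-ended halfspaces. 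Everything after that is an immediate application of Theorem \ref{GeoSplit}.
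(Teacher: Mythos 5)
Your high-level strategy is exactly the paper's: build a locally finite $2$-complex $X$ decomposing as $X_1\cup X_2$ with $X_1,X_2$ one-ended and $X_1\cap X_2$ coarsely $G_e$, apply Theorem~\ref{GeoSplit} to conclude $X$ (hence $G$) does not have pro-finite first homology at infinity, and then invoke Theorem~\ref{GMpro}. The difference lies entirely in how the complex $X$ and the decomposition are produced, and this is where your proposal has a genuine gap.

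You propose a literal $2$-dimensional tree of spaces $p:X\to T$, simplicial over the barycentric subdivision, with $X_1=p^{-1}(\h)$, $X_2=p^{-1}(\h^*)$, $W=p^{-1}(\hat e)$. But the cited \cite[Proposition~8.2]{MargolisShepherdStarkWoodhouse23} is a $1$-dimensional construction, and the ``cone off all loops of length $\le R$'' variant does \emph{not} work as stated: a short loop in the $1$-dimensional tree of spaces can cross the wall $W$ many times, and the cone cell on such a loop cannot lie in $X_1$ or $X_2$ and cannot be made compatible with a simplicial map to the barycentric subdivision of $T$. You flag this as ``the one place real care is needed'' but do not resolve it, and resolving it properly (slicing $2$-cells along wall preimages, keeping everything connected and locally finite) is more work than the rest of the argument combined.

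The paper avoids the problem with a simpler device. It works directly in a Cayley graph $X=\Cay(G,S)$ and takes the halfspaces $\h_1,\h_2$ from Definition~\ref{defn:halfspaceT}, then sets $X_1,X_2$ to be the $L$-neighborhoods of $\h_1,\h_2$ for a parameter $L\geq1$. This makes $X=X_1\cup X_2$ with $X_1,X_2$ quasi-isometric to the halfspaces (hence one-ended), and a short argument shows $X_1\cap X_2$ is connected and is acted on cocompactly by $G_e$. Now attaching $2$-cells along a finite presentation makes $X$ simply connected, and \emph{because} the relator loops have uniformly bounded diameter, choosing $L$ larger than that bound guarantees every attaching map lies entirely in $X_1$ or in $X_2$; so the $2$-cells can be distributed between $X_1$ and $X_2$ without changing their $1$-skeleta. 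The hypotheses of Theorem~\ref{GeoSplit} then hold, with $K$ coming from the fact that $X_1\cap X_2$ is quasi-isometric to the multi-ended $G_e$. In short: your decomposition $X_1\cap X_2=W$ is too tight to accommodate the $2$-cells, and the paper's thickening by $L$ is precisely the slack needed to make the argument go through cleanly.
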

\begin{proof}
	Following Definition \ref{defn:halfspaceT}, fix an orbit map $f:G\to VT$ and fix a finite generating set $S$ for $G$. Write $X$ for the Cayley graph of $G$ with respect to $S$. The halfspaces in $T$ bounded by the edge midpoint $\hat{e}$ have preimages in $G$ that induce halfspaces $\h_1$ and $\h_2$ in $X$. As discussed in Definition \ref{defn:halfspaceT}, $\h_1$ and $\h_2$ are connected subgraphs of $X$ for suitable choice of the finite generating set $S$. By hypothesis, $\h_1$ and $\h_2$ are one-ended.
	
	The halfspaces $\h_1$ and $\h_2$ are disjoint, but contain all the vertices of $X$. Let $X_1$ and $X_2$ be the $L$-neighborhoods of $\h_1$ and $\h_2$ respectively, with $L\geq 1$ an integer. Then $X=X_1\cup X_2$, and $X_1$ and $X_2$ are one-ended (they are quasi-isometric to $\h_1$ and $\h_2$).
	
	We now claim that $X_1\cap X_2$ is connected for large enough $L$, and that $G_e$ acts cocompactly on $X_1\cap X_2$.
	Let $\Sigma_e$ be the set of edges $(x_1,x_2)$ in $X$ with $x_i\in\h_i$. For $(x_1,x_2)\in\Sigma_e$ we have $f(x_1)$ and $f(x_2)$ separated by $e$ in $T$. Let $m(x_1,x_2)$ denote the pair of integers $(d(f(x_1),e),d(f(x_2),e))$. There is a uniform bound on these integers, since there is a uniform bound on $d(f(x_1),f(x_2))$ for $(x_1,x_2)$ ranging over all edges in $X$. Moreover, if $g\in G$ and $(x_1,x_2),(gx_1,gx_2)\in\Sigma_e$ with $m(x_1,x_2)=m(gx_1,gx_2)$, then $g\in G_e$. As a result, $\Sigma_e$ is a union of finitely many $G_e$-orbits of edges.
	Since $X$ is locally finite, it follows that $G_e$ acts cocompactly on $X_1\cap X_2$. As $G_e$ is finitely generated, it is also easy to see that $X_1\cap X_2$ is connected for large enough $L$.
	
	Now add 2-cells to $X$ according to a finite presentation for $G$. This makes $X$ simply connected, and $G=\pi_1(X/G)$. Furthermore, for large enough $L$, each 2-cell will have attaching map contained entirely in either $X_1$ or $X_2$, so we may add each 2-cell to either $X_1$ or $X_2$ without increasing their 1-skeleta. After adding 2-cells, we still have $X=X_1\cup X_2$, and $X_1$ and $X_2$ are still one-ended.
	
	By hypothesis, $G_e$ has more than one end, hence there is a finite subcomplex $K$ of $X$ such that $(X_1\cap X_2)-K$ has more than one unbounded component.
	We apply Theorem \ref{GeoSplit} to deduce that $X$, and hence $G$, does not have pro-finite first homology at infinity. Finally, Theorem \ref{GMpro} implies that $H^2(G,\mathbb ZG)\ne \{0\}$.
\end{proof}

\section{An example of a splitting obtained from folding}\label{sec:artificial}

We give an example of a finitely presented one-ended simply connected at infinity group that splits non-trivially over a finitely generated infinite-ended group. 
So there is no result that implies a general non-trivial splitting of a one-ended finitely presented group over an infinite-ended group is not simply connected at infinity.
Our example is obtained via \emph{folding}, an operation originally introduced for graphs by Stallings \cite{Stallings83}, and applied to group splittings by Bestvina and Feighn \cite{BestvinaFeighn91}.
More precisely, our example is obtained via the following lemma.

\begin{lemma}\label{Art} 
Suppose $G$ splits as $A\ast_CB$ and $D$ is a subgroup of $B$ containing $C$. Then $G$ also splits as $(A\ast_CD)\ast_D B$.
\end{lemma}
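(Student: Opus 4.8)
The plan is to verify the isomorphism via the universal property of amalgamated products, i.e.\ the fact that $A\ast_C B$ is the pushout of $A\hookleftarrow C\hookrightarrow B$ in the category of groups. First I would check that the iterated amalgam $(A\ast_CD)\ast_D B$ is even well-defined. This needs: $C$ injects into both $A$ and $D$ (immediate, since $C\leq D\leq B$); $D$ injects into $A\ast_CD$ (a standard property of amalgamated products); and $D$ injects into $B$ (given). Moreover the composite $C\hookrightarrow D\hookrightarrow B$ agrees with the original inclusion $C\hookrightarrow B$, so the two splittings really do involve the same subgroups sitting the same way.

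Next I would compute, for an arbitrary group $H$, the set of homomorphisms $(A\ast_CD)\ast_D B\to H$. Unwinding the outer pushout, such a homomorphism is a pair of homomorphisms $\phi\colon A\ast_CD\to H$ and $\psi\colon B\to H$ that agree on $D$; unwinding the inner pushout, $\phi$ is a pair $\alpha\colon A\to H$, $\delta\colon D\to H$ that agree on $C$. So the data amounts to a triple $(\alpha,\delta,\psi)$ with $\alpha|_C=\delta|_C$ and $\delta=\psi|_D$. Since $\delta$ is forced to be $\psi|_D$, this reduces to a pair $(\alpha,\psi)$ with $\alpha|_C=\psi|_C$, which is exactly a homomorphism $A\ast_C B\to H$. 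Hence $(A\ast_CD)\ast_D B$ and $A\ast_C B$ corepresent the same set-valued functor on groups, so they are isomorphic by Yoneda; tracing through the identifications shows the isomorphism restricts to the identity on the canonical copies of $A$ and $B$, so it genuinely identifies the two splittings and not merely the two abstract groups.

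As an alternative I would mention the Bass--Serre-theoretic route: construct the tree $T''$ for $(A\ast_CD)\ast_D B$ and exhibit an equivariant collapse of the $G$-orbit of edges labelled by the $A\ast_CD$ vertex (folding each such edge into the adjacent $D$-edge), producing the Bass--Serre tree of $A\ast_C B$ with the same $G$-action; but the universal-property argument is shorter and avoids normal-form bookkeeping.

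I do not expect a serious obstacle: the only points needing care are the well-definedness of the iterated amalgam — i.e.\ injectivity of $D$ into $A\ast_CD$ — and the verification that the resulting isomorphism is compatible with the inclusions of $A$ and $B$.
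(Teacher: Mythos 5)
Your argument is correct, but it takes a genuinely different route from the paper's. The paper proves the lemma by manipulating presentations: it fixes presentations $\mathcal P_Q=\langle S_Q:R_Q\rangle$ for $Q\in\{A,B,C,D\}$ with $\mathcal P_C$ a sub-presentation of $\mathcal P_A$ and $\mathcal P_D$, and $\mathcal P_D$ a sub-presentation of $\mathcal P_B$, then observes that the presentation $\langle S_A\cup S_D\cup S_B:R_A\cup R_D\cup R_B\rangle$ of $(A\ast_CD)\ast_D B$ literally collapses to the presentation $\langle S_A\cup S_B:R_A\cup R_B\rangle$ of $A\ast_C B$ because $S_D\subset S_B$ and $R_D\subset R_B$. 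You instead run the universal-property / Yoneda argument: homomorphisms out of $(A\ast_CD)\ast_D B$ are triples $(\alpha,\delta,\psi)$ with $\alpha|_C=\delta|_C$ and $\delta=\psi|_D$, which after eliminating $\delta$ is exactly a homomorphism out of $A\ast_C B$, and naturality in the target plus Yoneda gives the isomorphism. Both are standard and both are correct. The paper's presentation argument is more elementary and very short, and it keeps the generating-set bookkeeping explicit (which the paper later exploits when writing down $\mathcal P_G$). Your categorical argument is more conceptual; in particular it makes visible, without any choices, that the isomorphism is canonical and restricts to the identity on the copies of $A$, $B$, $C$, and $D$, which is exactly the point needed to say the two splittings of $G$ are ``the same.'' You correctly flag the only real hypothesis to check, namely injectivity of $D$ into $A\ast_C D$, which is standard. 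Either proof is acceptable; they differ in flavor, not in substance.
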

\begin{proof}
	We can transform the Bass--Serre tree of the first splitting into the Bass--Serre tree of the second splitting by  folding together the edges that correspond to cosets of $C$ which lie in the same coset of $D$.
If $D$ is finitely generated, then this can be expressed as a finite sequence of folds in the sense of Bestvina--Feighn \cite{BestvinaFeighn91}.

One can also argue using group presentations as follows.
Let $\mathcal P_Q=\langle S_Q:R_Q\rangle$ be a presentation for $Q$, where $Q\in\{A, B, C, D\}$. Assume that $\mathcal P_C$ is a sub-presentation of $\mathcal P_A$ and $\mathcal P_D$ (so that $S_C\subset S_A$, $R_C\subset R_A$, $S_C\subset S_D$ and $R_C\subset R_D$). Also assume that $\mathcal P_D$ is a sub-presentation of $\mathcal P_B$. Then a presentation for $G$ is
$$\mathcal P_G=\langle S_A\cup S_B: R_A\cup R_B\rangle.$$
A presentation for $(A\ast_CD)\ast_D B$ is 
$$\langle S_A\cup S_D\cup S_B:R_A\cup R_D\cup R_B\rangle.$$
This last presentation is $\mathcal P_G$ since $S_D\subset S_B$ and $R_D\subset R_B$. 
\end{proof}

\begin{example} \label{Ex1} 
Let $\mathbb Z_x$ be the infinite cyclic group with generator $x$. 
Let $A=\mathbb Z_a\times \mathbb Z_b\times \mathbb Z_f=\mathbb Z^3$,  $B=((\mathbb Z_a\times \mathbb Z_b)\ast \mathbb Z_c)\times \mathbb Z_d\times \mathbb Z_e$ and $C=\mathbb Z_a\times \mathbb Z_b$. The groups $A$ and $B$ are simply connected at infinity by \cite[Theorem 2]{J82a} and $C$ is one-ended. So $G=A\ast_C B$ is one-ended and simply connected at infinity by \cite[Theorem 2]{J82b}.
 
Let $D=(\mathbb Z_a\times \mathbb Z_b)\ast \mathbb Z_c$. As in Lemma \ref{Art}, the group $G$ has a second decomposition as 
$$G= (A\ast_CD)\ast_D B,$$
so the one-ended simply connected at infinity group $G$ splits non-trivially over the infinite-ended group $D=(\mathbb Z_a\times \mathbb Z_b)\ast \mathbb Z_c$. 
Note also that $B$ is one-ended, so the halfspace corresponding to $B$ in the above splitting is one-ended by Lemma \ref{lem:oneendedvertexgroup}, however, $D$ is infinite-ended, so the halfspace corresponding to $A\ast_CD$ has more than one end by Corollary \ref{Acyclic}.
\end{example}

\section{Some examples of splittings with one-ended halfspaces}\label{sec:examplesoneended}

The following proposition provides a general construction of HNN extensions with one-ended halfspaces (without appealing to more advanced theory like JSJ splittings).

\begin{proposition}\label{prop:stablecommute}
	Suppose $G=A\ast_C$, $A$ and $C$ are finitely generated and infinite, $t$ is the stable letter and $t^{-1}ct=c$ for all $c\in C$. If $G$ is one-ended then all the halfspaces in this splitting are one-ended.
	In particular, if $G$ is also finitely presented and $C$ has more than one end, then $H^2(G,\mathbb ZG)\ne \{0\}$ (by Corollary \ref{Acyclic}).
\end{proposition}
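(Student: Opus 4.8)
The ``in particular'' clause is immediate from Corollary~\ref{Acyclic}: $G=A\ast_C$ is a non-trivial splitting with minimal action of the one-ended finitely presented group $G$, its edge stabilizer $C$ is finitely generated with more than one end, and the two halfspaces associated to the edge are one-ended by the first assertion. So the task is to show that every halfspace of the splitting $G=A\ast_C$ is one-ended; by Lemma~\ref{lem:independent} I may use any convenient model, and I would work with the Cayley-graph model of Definition~\ref{defn:halfspaceamal} and Remark~\ref{remk:CayleyC}.

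The key structural point is that, since $t$ centralizes all of $C$, the subgroup $B:=\langle C,t\rangle$ equals $C\times\Z$, which is one-ended (a direct product of two infinite finitely generated groups). Let $X:=\Cay(G,S)$ for a finite generating set $S$ containing generating sets of $A$ and $C$ together with $t$. First I would record the following three facts. (i) For each $g\in G$ the induced subgraph of $X$ on the coset $gB$ is a Cayley graph of $B$, hence connected and one-ended; and the induced subgraphs on the ``half-cosets'' $g(C\times\Z_{\ge0})$ and $g(C\times\Z_{\le0})$ are likewise connected and one-ended, because $C$ is infinite (here I identify $B$ with $C\times\Z$). (ii) A short computation with HNN normal forms gives $B\cap V\h^{+}=C\times\Z_{\ge0}$ and $B\cap V\h^{-}=C\times\Z_{\le0}$, while the wall $\h^{+}\cap\h^{-}$ is the induced subgraph on $C=C\times\{0\}$ by Remark~\ref{remk:CayleyC}. (iii) Translating by $G$: every halfspace $\h$ of $X$ has wall $\hh:=\h\cap\h^{*}$ equal to the induced subgraph on some coset $gC$; setting $\beta:=$ the induced subgraph on $gB\supseteq gC$, the subgraph $\beta\cap\h$ is connected, one-ended, and contains $\hh$.

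With this in hand I would conclude by a cut argument. Suppose some halfspace $\h$ is not one-ended. Since $\h$ is connected and infinite, there is a finite subgraph $K$ with $\h\setminus K$ having at least two unbounded components $U_1,U_2$. Because $\beta\cap\h$ is one-ended, $(\beta\cap\h)\setminus K$ has a unique unbounded component, which lies (say) in $U_1$; as $\hh\subseteq\beta\cap\h$, every unbounded connected subgraph of $\hh\setminus K$ lies there too, so $\hh\cap U_2$ is bounded. Now consider the edges of $X$ with exactly one endpoint in $U_2$: if the other endpoint lies in $\h$ it must be in $K$ (as $U_2$ is a component of $\h\setminus K$), and if it lies outside $\h$ then the $U_2$-endpoint is in $\hh$ (a path leaving $\h$ crosses the wall), hence in the bounded set $\hh\cap U_2$. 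By local finiteness there are only finitely many such edges, so $U_2$ and $X\setminus U_2\supseteq U_1$ are both unbounded and separated by a finite edge set; thus $X$, and hence $G$, has more than one end, contradicting the hypothesis.

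The step I expect to be the crux is fact (iii) (together with (i) and (ii) that feed it): making precise that the wall of an arbitrary halfspace is an induced subgraph on a coset of $C$, that its $B$-enlargement is one-ended, and that this enlargement still meets $\h$ in a one-ended subgraph containing the wall. The idea is transparent -- centralizing $t$ turns the ``slab'' along the axis of $t$ into $\Cay(C)\times\Z$, whose half $\Cay(C)\times\Z_{\ge0}$ is still one-ended since $C$ is infinite -- but the normal-form bookkeeping (or, equivalently, the argument phrased via a tree of spaces over the Bass--Serre tree) needs care, and one must note that the wall itself may fail to be one-ended: only its $B$-enlargement is.
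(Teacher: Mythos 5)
Your proof is correct and uses essentially the same idea as the paper: the one-ended ``slab'' $C\times\Z_{\ge 0}$ (the paper's $\Delta$, your $\beta\cap\h$) sitting inside the halfspace, obtained from the fact that $t$ centralizes the infinite group $C$. The paper's endgame is organized in the opposite order -- it first uses one-endedness of $\Gamma$ to show the wall meets every unbounded component of $\h\setminus K$ unboundedly and then derives the contradiction inside $\Delta$, whereas you first use one-endedness of the slab to show $\hh\cap U_2$ is bounded and then derive the contradiction from one-endedness of $X$ -- but this is a cosmetic difference.
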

\begin{proof}
	We use the construction of halfspaces from Definition \ref{defn:halfspaceamal}.
	We have $S_A$ a finite generating set for $A$, and $S=S_A\cup\{t\}$ a finite generating set for $G$.
	The halfspace $\h^+$ is the induced subgraph of $\Gamma=\Cay(G,S)$ with vertex set corresponding to elements of $G$ with normal forms that do not start with an element of $C$ followed by a negative power of $t$.
	The opposite halfspace is denoted $\h^-$. If we choose $S$ so that it contains a generating set for $C$, then $\h^+\cap\h^-$ is connected and is isomorphic to a Cayley graph of $C$ (Remark \ref{remk:CayleyC}). We denote $\h^+\cap\h^-$ by $\Gamma_C$.
	We show $\h^+$ is one-ended (the argument for $\h^-$ is similar). Suppose a finite subgraph $K$ of $\Gamma$ separates $\h^+$, with at least two unbounded components $C_1$ and $C_2$. First observe that the intersection of $\Gamma_C$ with each of the components $C_1$ and $C_2$ is unbounded (otherwise $K$ union the intersection of this component with $\Gamma_C$ is a finite subgraph of $\Gamma$ separating $\Gamma$ with more than one unbounded component, but this implies $\Gamma$ has more than one end -- contrary to our hypothesis).  
	Now the product set $\{1,t,t^2,\dots\}C\subset G$ spans a subgraph $\Delta$ of $\h^+$, and $\Delta$ is a product of two infinite graphs (since $t$ commutes with every element of $C$), hence it is one-ended. But $C_1\cap\Delta$ and $C_2\cap\Delta$ are unbounded, hence they cannot be separated in $\Delta$ by the finite subgraph $K$, a contradiction.
\end{proof}

Next we show that a certain ``doubling" process always leads to amalgamations with one-ended halfspaces whenever the double is one-ended. If $A$ is a  group with subgroup $C$ then let $i:A\to A'$ be an isomorphism such that $i(C)=C'$. The {\it double} of $A$ across $C$ is the amalgamation $A\ast_{C=C'}A'$. 

\begin{proposition} \label{prop:double}
	Suppose $A$ is a finitely generated group with finitely generated subgroup $C$. Let $G=A\ast_{C=C'} A'$ be the double of $A$ across $C$. If $G$ is one-ended, then the halfspaces of $G$ with respect to $C$ are one-ended. In particular, if $G$ is also finitely presented and $C$ has more than one end, then $H^2(G,\mathbb ZG)\ne \{0\}$ (by Corollary \ref{Acyclic}).
\end{proposition}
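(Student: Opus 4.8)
The plan is to mimic the structure of the proof of Proposition \ref{prop:stablecommute}, replacing the product subgraph $\Delta=\{1,t,t^2,\dots\}C$ by an analogue that is available in the doubling situation. We work with the halfspaces $\h_A$ and $\h_{A'}$ from Definition \ref{defn:halfspaceamal}, using a finite generating set $S=S_A\cup S_{A'}$ that contains a generating set for $C$, so that $\Gamma_C:=\h_A\cap\h_{A'}$ is connected and isomorphic to a Cayley graph of $C$ (Remark \ref{remk:CayleyC}). By symmetry it suffices to show $\h_A$ is one-ended. So suppose a finite subgraph $K$ of $\Gamma=\Cay(G,S)$ separates $\h_A$ into at least two unbounded components $C_1,C_2$. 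Exactly as in the proof of Proposition \ref{prop:stablecommute}, one-endedness of $G$ forces $C_1\cap\Gamma_C$ and $C_2\cap\Gamma_C$ to both be unbounded: otherwise $K$ together with the (bounded) intersection $C_i\cap\Gamma_C$ would be a finite subgraph separating $\Gamma$ into more than one unbounded component.

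The key point is then to produce a one-ended subgraph of $\h_A$ containing $\Gamma_C$ in which $C_1\cap\Gamma_C$ and $C_2\cap\Gamma_C$ cannot be separated by a finite set. The natural candidate is the subgraph $\Delta$ of $\h_A$ spanned by the coset $C\cdot A'$ — equivalently, by elements of $G$ whose normal form is either an element of $C$ or begins $c a'$ with $c\in C$, $a'\in A'$. Concretely, if $S_{A'}$ is chosen to contain a generating set for $C'=C$, then $\Delta$ is isomorphic to the Cayley graph of $A'$ with respect to $S_{A'}$, and $\Gamma_C\subseteq\Delta$ corresponds to the subgraph spanned by $C'\le A'$. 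Since $G$ is one-ended and $A'\cong A$ is infinite, I first need to argue that $A'$ (hence $\Delta$) is one-ended: if $A'$ had more than one end then, using that $C$ is infinite and the normal form description of elements of $G$ starting with $A'$-letters, one can separate $\Gamma$ into more than one unbounded component, contradicting one-endedness of $G$. (Alternatively, since the double $G=A*_{C}A'$ surjects onto $A'$ with kernel normally generated in a controlled way, one-endedness of $G$ restricts the ends of $A'$; but the direct Cayley-graph argument is cleaner and parallels Proposition \ref{prop:stablecommute}.) Granting that $\Delta$ is one-ended, the unbounded sets $C_1\cap\Gamma_C$ and $C_2\cap\Gamma_C$ lie in $\Delta$ and cannot be separated there by the finite subgraph $K\cap\Delta$, contradicting the assumption that they lie in distinct components of $\h_A-K$.

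The main obstacle I anticipate is the step showing $\Delta$ (equivalently $A'$) is one-ended, and more precisely that it embeds in $\h_A$ in a way that lets a separation of $\h_A$ restrict to a genuine separation of $\Delta$; one must be careful that paths in $\h_A$ between two vertices of $\Delta$ need not stay in $\Delta$, so the argument that $K$ separates $C_1\cap\Gamma_C$ from $C_2\cap\Gamma_C$ inside $\Delta$ really uses that these sets lie in different components of the ambient $\h_A-K$ together with $\Delta\subseteq\h_A$ — this is exactly the same subtlety handled in the proof of Proposition \ref{prop:stablecommute}, so it should go through verbatim once $\Delta$ is identified. A secondary (routine) point is justifying that $\Gamma_C\subseteq\Delta$ and that $\Delta$ really is the Cayley graph of $A'$: this follows from the normal form theorem for amalgams and the choice of generating set. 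Finally, the ``In particular'' clause is immediate: if $G$ is finitely presented and $C$ has more than one end, then the one-endedness of the two halfspaces associated to the amalgamating edge, together with Corollary \ref{Acyclic}, gives $H^2(G,\mathbb{Z}G)\ne\{0\}$.
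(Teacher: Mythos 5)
There is a genuine gap, in fact two, and the approach cannot be repaired to match the generality of the statement.

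First, the candidate subgraph $\Delta$ does not lie in $\h_A$. Since $C=C'\subseteq A'$, the set $C\cdot A'$ is just $A'$, and by Definition \ref{defn:halfspaceamal} the elements of $A'-C$ have normal forms beginning with an $A'$-letter, so they lie in $\h_{A'}$, not $\h_A$; in fact $A'\cap\h_A=C$. In the Bass--Serre tree, if $v_A$ is the vertex fixed by $A$ then $a'v_A$ lies on the opposite side of the midpoint $\hat e$ from $v_A$ for every $a'\in A'-C$. So the analogue of the half-infinite product $\{1,t,t^2,\dots\}C$ from Proposition \ref{prop:stablecommute} simply is not available inside a single halfspace of a double: there is no stable letter pushing $C$ deeper into $\h_A$ while commuting with it.

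Second, and more fundamentally, the step asserting that $A'$ (hence $\Delta$) must be one-ended when $G$ is one-ended is false. Doubles of infinite-ended groups over infinite subgroups can very well be one-ended: a hyperbolic surface group is a double $F_2*_{\mathbb{Z}}F_2$ with $F_2$ infinite-ended, and the paper's own Example \ref{exm:amalgam} applies Proposition \ref{prop:double} to a one-ended double $G=A*_{C=C'}A'$ in which $A=(F_2\times F_2\times F_2)*F_2$ is infinite-ended. In such cases there is no hope of extracting a one-ended subgraph of the form you propose, and indeed the ``direct Cayley-graph argument'' you sketch for the one-endedness of $A'$ would prove something false. The paper's proof avoids all of this via a completely different mechanism: the involution $r\colon G\to G$ swapping $A$ and $A'$ induces a graph involution $\hat r$ of the Cayley graph fixing $\Gamma_C$ pointwise and exchanging the two halfspaces. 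Given a compact $K\subseteq\h$, one chooses $D$ large so that points of $X-D$ can be joined in $X-(K\cup\hat r(K))$, and then folds the $\h'$-portions of such a path back into $\h$ via $\hat r$ to obtain a path in $\h-K$. That reflection trick, rather than a one-ended ``deep'' subgraph, is the essential idea, and your proposal does not contain it. The final ``in particular'' reduction to Corollary \ref{Acyclic} is of course fine.
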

\begin{proof} 
	First notice that there is a unique automorphism $r:G\to G$ satisfying $r(a)=i(a)$ for all $a\in A$ and $r(a')=i^{-1}(a')$ for all $a'\in A'$. If $X$ is the Cayley graph of $G$ with respect to a symmetric set of generators for $A$ and $A'$ (and $C$ and $C'$), then $r$ induces an involution $\hat r: X\to X$ that fixes the sub-Cayley graph for $C=C'$ and exchanges halfspaces. More precisely, if $\h=\h_A$ and $\h'=\h_{A'}$ are the halfspaces of $X$ given by Definition \ref{defn:halfspaceamal}, then $\hat r(\h)= \h'$ and $\hat r(\h')=\h$. Suppose $K$ is a compact subset of $\h$. Since $G$ (and hence $X$) is one-ended, there is a compact set $D\subset X$ such that any two points  $x,y \in X-D$ can be joined by a path in $X-(K\cup \hat r(K))$. Suppose $x$ and $y$ are points in $\h-D$ and $\alpha$ is a path in $X-(K\cup \hat r(K))$ from $x$ to $y$. Decompose $\alpha$ as $(\alpha_1, \alpha_1', \ldots, \alpha_n,\alpha_n')$ where $\alpha_i$ (respectively $\alpha_i'$) has image in $\h$ (respectively $\h'$). The path $(\alpha_1, \hat r(\alpha_1'), \ldots, \alpha_n, \hat r (\alpha_n'))$ is a path from $x$ to $y$ in $\h-K$. Hence $\h$ is one-ended (and $\h'$ similarly).
\end{proof}

The previous two propositions are good ways to produce examples of groups $G$ such that $H^2(G,\mathbb ZG)\ne \{0\}$, and hence groups that are not duality groups.
We provide two such examples below (Examples \ref{exm:HNN} and \ref{exm:amalgam}).
Recall that a group $G$ that acts freely and cocompactly on a contractible cell complex
$X$ (by permuting cells) is a {\it duality group of dimension} $n$ if $H^i (G; \mathbb Z G)$ vanishes when $i\ne n$ and is torsion-free (as an abelian group) when $i = n$ (see R. Bieri and B. Eckmann's article \cite{BE73}). 

Examples \ref{exm:HNN} and \ref{exm:amalgam} are chosen in such a way that we cannot see a method other than using one-endedness of halfspaces to show that they are not duality groups.
Before giving the examples, we discuss two other methods for showing that a group is not a duality group (which fail for our examples).

\textbf{(Co)homological dimension:}
Any nontrivial amalgam $G=A*_C B$ or HNN extension $G=A*_C$ of an $n$-dimensional duality group $G$ requires $n-1\leq \mathrm{cd}(C)\leq\mathrm{cd}(A),\mathrm{cd}(B)\leq n$ \cite[Proposition 9.14]{Bieri81}.
Furthermore, if $G$ is a Poincar\'e duality group, then every subgroup of $G$ either has finite index or has homology dimension at most $n-1$ \cite[Proposition 9.22]{Bieri81}.

\textbf{Mayer--Vietoris sequences:}
The Mayer--Vietoris long exact sequence can be applied to both amalgams and HNN extensions of groups. We state the result for amalgams.

\begin{theorem} [Theorem 2.10, \cite{Bieri76}] \label{AmalH2} Suppose the group $G$ decomposes as $G = A \ast_C B$, where $A$ and $B$ are finitely presented and $C$ is finitely generated.  Then one has a natural long exact sequence:
	$$\cdots \to H^k(G,\mathbb ZG)\to H^k(A,\mathbb ZG)\oplus H^k(B,\mathbb ZG)\to H^k(C, \mathbb ZG)\to H^{k+1}(G,\mathbb ZG)\to\cdots$$
\end{theorem}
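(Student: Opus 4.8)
The plan is to deduce the sequence formally from the short exact sequence of $\Z G$-modules carried by the Bass--Serre tree, together with the long exact sequence for $\operatorname{Ext}$ and the Eckmann--Shapiro lemma; the finiteness hypotheses on $A$, $B$ and $C$ are in fact not needed for this argument.

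First I would introduce the Bass--Serre tree $T$ of the splitting $G=A\ast_C B$. The action $G\acts T$ has no inversions, one orbit of edges with stabilizer conjugate to $C$, and two orbits of vertices with stabilizers conjugate to $A$ and $B$; accordingly the simplicial chain modules are the permutation modules $C_1(T)\cong\Z[G/C]$ and $C_0(T)\cong\Z[G/A]\oplus\Z[G/B]$, where the absence of inversions is what allows edge orientations to be chosen $G$-equivariantly. Since $T$ is contractible, its augmented chain complex is exact, which gives a short exact sequence of $\Z G$-modules
\begin{equation}\label{eq:MVses}
0\to\Z[G/C]\xrightarrow{\ \partial\ }\Z[G/A]\oplus\Z[G/B]\xrightarrow{\ \varepsilon\ }\Z\to 0,
\end{equation}
where $\partial$ sends an edge to its initial vertex minus its terminal vertex, $\varepsilon$ is augmentation, and $\Z[G/Q]\cong\Z G\otimes_{\Z Q}\Z$ is the module induced from $Q\in\{A,B,C\}$.

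Next I would apply the contravariant long exact sequence $\operatorname{Ext}^\bullet_{\Z G}(-,\Z G)$ to \eqref{eq:MVses}. Since $\operatorname{Ext}$ converts the finite direct sum in the middle term into a direct sum, and $\operatorname{Ext}^k_{\Z G}(\Z,\Z G)=H^k(G,\Z G)$ by definition, this yields the long exact sequence
$$\cdots\to H^k(G,\Z G)\to\operatorname{Ext}^k_{\Z G}(\Z[G/A],\Z G)\oplus\operatorname{Ext}^k_{\Z G}(\Z[G/B],\Z G)\to\operatorname{Ext}^k_{\Z G}(\Z[G/C],\Z G)\to H^{k+1}(G,\Z G)\to\cdots$$
To finish, I would invoke Eckmann--Shapiro to identify $\operatorname{Ext}^k_{\Z G}(\Z G\otimes_{\Z Q}\Z,\,\Z G)\cong\operatorname{Ext}^k_{\Z Q}(\Z,\,\Z G)=H^k(Q,\Z G)$ for each $Q\in\{A,B,C\}$: because $\Z G$ is free, hence flat, as a right $\Z Q$-module, applying $\Z G\otimes_{\Z Q}-$ to a projective $\Z Q$-resolution of $\Z$ produces a projective $\Z G$-resolution of $\Z G\otimes_{\Z Q}\Z$, and the extension-of-scalars/restriction adjunction $\operatorname{Hom}_{\Z G}(\Z G\otimes_{\Z Q}-,\,\Z G)\cong\operatorname{Hom}_{\Z Q}(-,\,\Z G)$ identifies the two cochain complexes, naturally. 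Substituting these identifications into the previous display gives exactly the asserted sequence, the middle map being the difference of the two restriction homomorphisms (dual to $\partial$), and naturality is inherited from naturality of the $\operatorname{Ext}$ connecting map and of the adjunction.

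The one step that requires genuine care is the Eckmann--Shapiro identification --- in particular, checking that induction carries projective $\Z Q$-modules to projective $\Z G$-modules and that the adjunction isomorphism is natural at the level of resolutions --- together with the elementary but easily-botched verification that $C_\bullet(T)$ really does have the claimed $\Z G$-module structure. Everything else is a formal consequence of the long exact $\operatorname{Ext}$-sequence.
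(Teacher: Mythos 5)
Your proof is correct. The paper cites this result from Bieri without supplying an argument, so there is no internal proof to compare against; your Bass--Serre-tree derivation (the contractibility of $T$ giving the short exact sequence $0\to\Z[G/C]\to\Z[G/A]\oplus\Z[G/B]\to\Z\to0$, followed by the long exact $\operatorname{Ext}^\bullet_{\Z G}(-,\Z G)$-sequence and the Eckmann--Shapiro identifications) is the standard route, and your observation that the finiteness hypotheses on $A$, $B$, $C$ are not actually used by the Mayer--Vietoris argument is also accurate --- they are carried in the statement only because the paper's surrounding results concern finitely presented groups.
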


Suppose that the group $G$ decomposes as $G=A\ast_CB$, where $A$ and $B$ are one-ended and finitely presented and $C$ is finitely generated with more than one end. The number of ends of a finitely generated group is equal to $rank (H^1(G, RG))+1$ (see Theorem 13.5.5, \cite{Geoghegan08}). So we have $H^1(A,\mathbb ZG )=H^1(B,\mathbb ZG)=\{0\}$. 
Applying the above exact sequence, we obtain: 
$$0\to H^1(C,\mathbb ZG)\to H^2(G,\mathbb ZG)$$ 
In particular, $H^2(G,\mathbb ZG)\ne \{0\}$ (which implies Jackson's theorem). However, when $A$ or $B$ is not one-ended, this sequence gives little information about $H^2(G,\mathbb ZG)$.

\begin{example}\label{exm:HNN}
Let $A=\mathbb Z^n$ ($n\geq2$) with basis $\{a_1,\ldots, a_n\}$. Consider the free product $A\ast\mathbb{Z}$, where the second factor is generated by an element $s$. Let $C$ be the subgroup of $A\ast \langle s\rangle$ with generating set $\{a_1,\ldots, a_{n-1}, s^2, sa_1sa_2\cdots sa_ns\}$. Note that $C\cong\mathbb Z^{n-1}\ast F_2$. Define
$$G=(A\ast \langle s\rangle)\ast_C$$
with stable letter $t$ such that $t^{-1}ct=c$ for all $c\in C$. 

Note that the base group $A\ast \langle s\rangle$ has cohomological dimension $n$ and the edge group $C$ has cohomological dimension $n-1$, so $G$ looks like a duality group from the perspective of cohomological dimension.
The point of including the element $as_1sa_2\cdots sa_ns$ in $C$ is to prevent other splittings of $G$ over groups of lower cohomological dimension.

Now we show that $G$ is one-ended. Assume not. As $G$ is torsion free, it splits non-trivially as $U\ast V$ with $U$ and $V$ non-trivial torsion free groups. The subgroup generated by $C$ and the stable letter $t$ has the form $C\times \langle t\rangle$ and so is  one-ended. Its intersection with the one-ended group $A$ is $\langle a_1,\ldots, a_{n-1}\rangle$, which is infinite. This implies the group generated by $A$, $t$, $s^2$ and $as_1sa_2\cdots sa_ns$ is one-ended and so must be contained in a conjugate of $U$ or $V$ (say $U$). Killing the normal closure of $U$ in $G$ leaves $V$. But killing $U$ also kills $A$, $t$ and $s^2$ and so $V$ is at most $\mathbb Z_2$, a contradiction.  

The edge group $C$ is infinite-ended and $G$ is finitely presented, so $H^2(G,\mathbb ZG)\ne \{0\}$ by Proposition \ref{prop:stablecommute}. Hence $G$ is not a duality group.
\end{example}

\begin{example}\label{exm:amalgam}
Consider the group 
$$A=(F_2\times F_2\times F_2)\ast F_2=(\langle x_1,x_2\rangle\times \langle y_1,y_2\rangle\times \langle z_1,z_2\rangle)\ast \langle a_1,a_2\rangle.$$
Let $G=A\ast _{C=C'}  A'$ be the double of $A$ across $C$, where 
$$C =\langle x_1,y_1,z_1, a_1a_2a_1^{-1}a_2^{-1}, a_1x_2 a_2y_2 a_1z_2\rangle.$$ %(and in $G$, $x_1=x_1',\  y_1=y_1',\ldots$).  
Note that $A$, $A'$ and $C\cong\mathbb{Z}^3\ast F_2$ have cohomological dimension 3.
Also note that $A$ and $A'$ are infinite-ended, so we cannot use the Mayer--Vietoris sequence to show that $H^2(G,\mathbb ZG)\ne \{0\}$.

Now we show $G$ is one-ended. Suppose not. As $G$ is torsion free, it splits non-trivially as $U\ast V$. The subgroup of $G$ generated by $$\{x_1=x'_1,x_2,x'_2,y_1=y'_1,y_2,y'_2,z_1=z'_1,z_2,z'_2\}$$ is isomorphic to $F_2^3 *_{\Z^3}F_2^3$, so is one-ended (where $x'_i,y'_i,z'_i$ are the images of $x_i,y_i,z_i$ under the isomorphism $A\to A'$), as is the subgroup generated by $$\{a_1,a_2,a_1',a_2'\}$$ (a surface group). These subgroups generate $G$. One lies in a conjugate $gUg^{-1}$, the other in a conjugate of $hVh^{-1}$ (killing the normal closure of $U$ leaves $V$). By a standard ping-pong argument in the Bass--Serre tree of the free product $U\ast V$, the subgroup $\langle gUg^{-1},hVh^{-1}\rangle$ splits as a free product $gUg^{-1}\ast hVh^{-1}$.
But then the relator $a_1x_2 a_2y_2 a_1z_2(a_1'x_2' a_2'y_2' a_1'z_2')^{-1}$ has length 11 with respect to the normal form in $gUg^{-1}\ast hVh^{-1}$, which is impossible. Instead $G$ is one-ended. 

The edge group $C$ is infinite-ended and $G$ is finitely presented, so Proposition \ref{prop:double} implies that the halfspaces of $G=A\ast _{C=C'}  A'$ are one-ended, and that $H^2(G,\mathbb ZG)\ne \{0\}$. In particular $G$ is not a duality group. 
\end{example}

\bibliographystyle{amsalpha}
\bibliography{ref}{}

\end{document}